\documentclass[a4paper,12pt]{article}

\newenvironment{proof}{\noindent {\bf Proof:}}{\hfill $\Box$}

\newtheorem{theorem}{Theorem}
\newtheorem{lemma}{Lemma}

\newtheorem{assumption}{Assumption}

\textheight235mm
\textwidth160mm
\voffset-10mm
\hoffset-10mm
\parindent0cm
\parskip2mm

\usepackage{amsmath}
\usepackage{amssymb}
\usepackage{amsfonts}
\usepackage{graphicx}

\newcommand{\E}{\mathrm{\bf E}}

\title{\bf Convex computation of the maximum controlled invariant set for polynomial control systems$^\star$}

\linespread{0.99}

\begin{document}

\author{Milan Korda$^1$, Didier Henrion$^{2,3,4}$, Colin N. Jones$^1$}

\renewcommand\thefootnote{}
\footnotetext{\hspace{-0.2cm}$^\star$A preliminary version of this work, dealing with discrete-time systems only, has been submitted for possible presentation at the IEEE Conf. on Decision and Control, 2013.}
\footnotetext{\hspace{-0.2cm}$^1$Laboratoire d'Automatique, \'Ecole Polytechnique F\'ed\'erale de Lausanne, Station 9,
CH-1015, Lausanne, Switzerland. {\tt \{milan.korda,colin.jones\}@epfl.ch}}
\footnotetext{\hspace{-0.2cm}$^2$CNRS, LAAS, 7 avenue du colonel Roche, F-31400 Toulouse; France. {\tt henrion@laas.fr}}
\footnotetext{\hspace{-0.2cm}$^3$Universit\'e de Toulouse, LAAS, F-31400 Toulouse; France}
\footnotetext{\hspace{-0.2cm}$^4$Faculty of Electrical Engineering, Czech Technical University in Prague, Technick\'a 2, CZ-16626 Prague, Czech Republic}
\renewcommand\thefootnote{\arabic{footnote}}

\date{ \today}

\maketitle

\begin{abstract}
We characterize the maximum controlled invariant (MCI) set for discrete-
as well as continuous-time nonlinear dynamical systems as the solution of an infinite-dimensional
linear programming problem. For systems with polynomial dynamics
and compact semialgebraic state and control constraints, we describe a hierarchy
of finite-dimensional linear matrix inequality (LMI) relaxations whose optimal values converge to the volume of the MCI set; dual to these LMI relaxations are sum-of-squares (SOS) problems providing a converging sequence of outer approximations to the MCI set. The approach is simple and readily applicable in the sense that the approximations are the outcome of a single semidefinite program with no additional input apart from the problem description. A number of numerical examples illustrate the approach.
\end{abstract}

\section{Introduction}

Given a controlled dynamical system described by a differential (continuous-time)
or difference (discrete-time) equation, its maximum controlled invariant (MCI) set
is the set of all initial states that can be kept within a given constraint set ad~infinitum using admissible control inputs. This set goes by many other names in the literature, e.g., viability kernel in viability theory~\cite{aubin}, or $(A,B)$-invariant set in the linear case~\cite{dorea}.

Set invariance is an ubiquitous and essential concept in dynamical systems theory, as far as both analysis and control synthesis is concerned. In particular, by its very definition, the MCI set determines fundamental limitations of a given control system with respect to constraint satisfaction. In addition, there is a very tight link between invariant sets and (control) Lyapunov functions. Indeed, sub-level sets of a Lyapunov function give rise to invariant sets. Conversely, at least in the linear case, any controlled invariant set gives rise to a control Lyapunov function, and therefore these sets can be readily used to design stabilizing control laws; see, e.g., \cite{blanchini} for a general treatment and, e.g., \cite{gond_aut09, kerriganPHD} for applications in model predictive control design.

The problem of (maximum) controlled invariant set computation for discrete-time systems has been a topic of active research for more than four decades. The central tool in this effort has been the contractive algorithm of~\cite{bertsekas72} and its expansive counterpart~\cite{gutman_cwikel}. For an exhaustive survey and historical remarks see the survey~\cite{blanchini} and the book~\cite{miani}. 
 
Both algorithms, although conceptually applicable to any nonlinear system, have been predominantly applied in a linear setting where they boil down to a sequence of linear programs and polyhedral projections. Finite termination of this sequence is a subtle problem and sharp results are available only in the uncontrolled setting where no projections are required~\cite{gilbert_tan}; for discussion of finite-termination in the controlled case see~\cite{vidal}. The contractive and expansive algorithms were combined in~\cite{gond_aut09} to design an algorithm  terminating in a finite number of iterations and outputting an $\epsilon$-accurate inner approximation of the MCI set (with the accuracy measured by the Hausdorff distance). Another line of research, culminating in~\cite{rakovic}, exploits the linearity of the system dynamics in a more systematic way and approximates the maximum (or minimum) \emph{robust} controlled invariant set by the Minkowski sum of a parametrized family of sets. Very recently, in continuous time, \cite{kurzhanski_2013} developed a parallel algorithm for ellipsoidal approximations of the robust MCI set scalable to very high dimensions. Computation of low-complexity polyhedral controlled invariant sets was investigated in~\cite{blanchini2008} and \cite{blanco}.


In the nonlinear case, a common practice is to exploit the tight connection between invariance and Lyapunov functions and seek invariant sets as sub-level sets of a (control) Lyapunov function; see, e.g., \cite{chesi,topcu} and references therein for recent theoretical developments on the related problem of region of attraction computation and, e.g., \cite{anirudha} for practical applications of these techniques. This, however, typically leads to non-convex bilinear optimization problems which are notoriously hard to solve. Therefore, one often has to resort to ad-hoc analysis of the specific system at hand, which is typically tractable only in small dimensions; see~\cite{starkov1,starkov2} for concrete examples. Related in spirit is the localization technique of~\cite{kanatnikov} for discrete-time \emph{uncontrolled} systems, also requiring considerable effort in analysing the system.

Recently, a general approach using a hierarchy of finite-dimensional linear programs (LPs) was used in \cite{girard} to design a controller ensuring invariance of a given candidate polyhedral set. In our opinion, although being the current state of the art, this work still suffers from the following drawbacks: 
1) the sets obtained are convex polytopes (not general semi-algebraic sets, a fact particularly limiting in the nonlinear case where nonconvex MCI sets are common); 2) the geometry of the candidate polytopic
set must be given a priori; 3) there are no convergence guarantees to the MCI set.
In this paper, we explicitly address all these points.

Building upon our previous work \cite{roa} on the computation of
the region of attraction (ROA) for polynomial control systems, in this paper
we characterize the maximum controlled invariant (MCI) set for discrete-
as well as continuous-time polynomial systems as the solution to an infinite-dimensional LP problem in the cone of nonnegative measures. The dual of this problem is an infinite-dimensional LP in the space of continuous functions. Finite-dimensional \emph{relaxations} of the primal LP and finite-dimensional \emph{approximations} of the dual LP turn out to be semidefinite programs (SDPs) also related by duality. The primal relaxations lead to a truncated moment problem while the dual approximations to a sum-of-squares (SOS) problem. Super-level sets of one of the polynomials appearing in the dual SOS problem then provide outer approximations to the MCI set with guaranteed convergence as the degree of the polynomial tends to infinity.

The main mathematical tool we use are the so-called occupation measures which allow us to study the time evolution of the whole ensemble of initial conditions (described by a measure) rather than studying trajectories associated to each initial condition separately. The use of measures to study dynamical systems has a very long tradition: see~\cite{rubio} for probably the first systematic treatment\footnote{In~\cite{rubio}, J. E. Rubio used Young measures~\cite{young} rather than occupation measures, but the basic idea of ``linearizing'' a nonlinear problem by going into an infinite-dimensional space of measures is the same.}; for purely discrete-time treatment see~\cite[Chapter~6]{discrete}. To the best of the authors' knowledge our paper is the first one to use occupation measures for MCI set (approximate) \emph{computation}. The MCI set was previously \emph{characterized} using occupation measures in~\cite{quincampoix}, but there the characterization is rather indirect and not straightforwardly amenable to computation. Apart from the authors' work~\cite{roa}, the related problem of region of attraction computation was tackled using measures in~\cite{vaidya}. There, however, a very different approach was taken, not using \emph{occupation} measures but rather analyzing convergence via discretization of the state-space and propagating the initial distribution by means of a discretized transfer operator. Here, instead, we employ the (discounted) occupation measure which captures the behaviour of the trajectories emanating from the initial distribution over the infinite time horizon. As a result, our approach requires no discretization and, contrary to~\cite{vaidya}, provides true guarantees (not in an ``almost-everywhere'' or ``coarse'' sense) and, more importantly, is applicable in a controlled setting. Closely related to the occupation measures used here is the Rantzer's density~\cite{rantzer} which was used in~\cite{rajaram} to assess the stability of attractor sets of uncontrolled nonlinear systems. The approach, however, does not immediately yield approximations of the MCI set (or the region of attraction) and applies to uncontrolled systems only.


Similar in spirit to our approach, from the dual viewpoint of optimization over functions, are the Hamilton-Jacobi approaches (e.g., \cite{lygeros,mitchell}). However, contrary to these methods, our approach does not require state-space discretization and comes with convergence guarantees.

The contribution of our paper with respect to previous work on the topic can be summarized as follows:
\begin{itemize}
\item we deal with fully general continuous-time and discrete-time polynomial
dynamics under semi-algebraic state and control constraints;
\item our approximated MCI set is described by (the intersections of) polynomial super-level sets,
including more restrictive classes (e.g. polytopes, ellipsoids, etc.);
\item we provide a convex infinite-dimensional LP characterization of the MCI set;
\item we describe a hierarchy of convex finite-dimensional SDPs to solve the LP with convergence guarantees;
\item our approach is simple and readily applicable in the sense that the approximations are the result of a single SDP with \emph{no additional data} required apart from the problem description.
\end{itemize}
The contribution with respect to our previous work \cite{roa} can be summarised as follows:
\begin{itemize}
\item in \cite{roa} we compute the ROA, which is a related although different object: it is the set of all of initial conditions that can be steered to a given target set while satisfying state and control
constraints. In particular, the MCI set differs from the ROA in the sense that we do not try to hit any target set at a given time but rather try to keep the state within a given set forever. Therefore we had to adapt our technique to deal explicitly with invariance;
\item in \cite{roa} we dealt with continuous-time systems only, whereas
we can cope, with minor modifications, with discrete-time systems as well;
we choose to describe both the continuous-time and discrete-time setups
in parallel precisely to underline these common features;
\item in \cite{roa} we considered only a finite time-horizon, whereas
here we show how to cope, with the help of discounting, with an infinite horizon. This brought additional technical issues not encountered in finite time.
\end{itemize}

What can be considered a drawback of our approach is the fact that the approximations to the MCI set we obtain are from the outside and therefore not invariant. However, accurate outer approximations provide important information as to the performance limitations of the control system and are of practical interest, e.g., in collision avoidance. Therefore we believe that our work bears both theoretical and practical value, and naturally complements existing inner-approximation techniques.

The paper is organised as follows. The problem to be solved is described in Section~\ref{sec:probState}. Occupation measures are introduced in Section~\ref{sec:om}. The infinite-dimensional primal and dual LPs are described in Sections~\ref{sec:primalLP} and \ref{sec:dualLP}, respectively. The finite-dimensional relaxations with convergence results are presented in Section~\ref{sec:LMIrelax}. Numerical examples are in Section~\ref{sec:NumEx}. A reader interested only in the semialgebraic outer approximations of the MCI set can consult directly the infinite-dimensional dual LPs~(\ref{vlpd}) and (\ref{vlpc}) and their finite-dimensional approximations~(\ref{dlmid}) and (\ref{dlmic}) in discrete and continuous time, respectively.

\subsection{Notation}
Measures are understood as signed Borel measures on a Euclidean space, i.e., as countably additive maps from the Borel sets to the real numbers. From now on all subsets of a Euclidean space we refer to are automatically understood as Borel. The vector space of all signed Borel measures with its support
contained in a set $X$ is denoted by $M(X)$. The support (i.e., the smallest closed set
whose complement has a zero measure) of a measure~$\mu$ is denoted by $\mathrm{spt}\,\mu$.
The space of continuous functions on $X$ is denoted by $C(X)$ and likewise the space of
once continuously differentiable functions is $C^1(X)$. The indicator function of a set $X$
(i.e., a function equal to one on $X$ and zero otherwise) is denoted by $I_X(\cdot)$.
The symbol $\lambda$ denotes the $n$-dimensional Lebesgue measure (i.e.,  the standard
$n$-dimensional volume). The integral of a function $v$ with respect to a measure $\mu$ over a set $X$
is denoted by $\int_X v(x)\,d\mu(x)$. Sometimes for conciseness we use the shorter
notation $\int v\,d\mu$ omitting the integration variable and also the set over which
we integrate if they are obvious from the context. The ring of polynomials
in (possibly vector) variables $x_1$,\ldots,$x_n$ is denoted by $\mathbb{R}[x_1,\ldots,x_n]$.

\section{Problem statement}\label{sec:probState}
The approach is developed in parallel for discrete and continuous time.

\subsection{Discrete time}
Consider the discrete-time control system
\begin{equation}\label{sysd}
x_{t+1} = f(x_t,u_t), \quad x_t \in X, \quad u_t \in U, \quad t \in \{0,1,\dots\}
\end{equation}
with a given polynomial vector field $f$ with entries $f_i \in {\mathbb R}[x,u]$,
$i=1,\ldots,n$, and given compact basic semialgebraic state and input constraints
\[
\begin{array}{l}
x_t \in X := \{x \in {\mathbb R}^n \: :\: {g_X}_i(x) \geq 0, i=1,2,\ldots,n_X\}, \\ 
u_t \in U := \{u \in {\mathbb R}^m \: :\: {g_U}_i(u) \geq 0, i=1,2,\ldots,n_U\} \\
\end{array}
\]
with ${g_X}_i \in {\mathbb R}[x]$, ${g_U}_i \in {\mathbb R}[u]$.

The maximum controlled invariant (MCI) set is defined as
\begin{align*}
	X_I := \Big\{ x_0 \in X \: : \: & \exists\: \big(\{x_t\}_{t=1}^\infty, \{u_t\}_{t=1}^\infty\big)  \:\: \mathrm{s.t.}\:\: x_{t+1}=f(x_t,u_t),\\  \nonumber &u_t\in U,\,x_t\in X,\, \forall t\in \{0,1,\ldots\}\Big\}.
\end{align*}

A control sequence $\{u_t\}_{t=0}^\infty$ is called \emph{admissible} if $u_t\in U$ for all $t\in\{0,1,\dots\}$.

In words, the MCI set is the set of all initial states which can be kept inside the constraint set $X$ ad infinitum using admissible control inputs.

\subsection{Continuous time}

Consider the relaxed continuous-time control system
\begin{equation}\label{sysc}
\dot{x}(t) \in \mathrm{conv}\, f(x(t), U), \quad x(t) \in X, \quad t \in [0,\infty),
\end{equation}
where $\mathrm{conv}$ denotes the convex hull, $f$ is a polynomial vector field with entries $f_i \in {\mathbb R}[x,u]$,
$i=1,\ldots,n,$ and compact basic semialgebraic state and input constraint sets are defined~by
\[
\begin{array}{l}
X := \{x \in {\mathbb R}^n \: :\: {g_X}_i(x) \geq 0, i=1,2,\ldots,n_X\},\\ 
U := \{u \in {\mathbb R}^m \: :\: {g_U}_i(u) \geq 0, i=1,2,\ldots,n_U\} \\
\end{array}
\]
with ${g_X}_i \in {\mathbb R}[x]$, ${g_U}_i \in {\mathbb R}[u]$.
The meaning of the convex differential inclusion (\ref{sysc}) is as follows:
for all time $t$, the state velocity $\dot{x}(t)$ is constrained to the convex hull
of the set $f(x(t), U) := \{f(x(t),u) \: :\: u \in U\} \subset {\mathbb R}^n$.
The connection of this convexified (or relaxed) control problem~(\ref{sysc}) and the classical control problem $\dot{x}=f(x,u)$ is the Filippov-Wa$\dot{\mathrm z}$ewski Theorem \cite{aubin}, which shows that the trajectories of $\dot{x} = f(x,u)$ are dense (in the supremum norm) in the set of trajectories of the convexified inclusion\footnote{Note that the set $\mathrm{conv}\:f(x(t),U)$ is closed for every $t$ since $f$ is continuous and $U$ compact; therefore there is no need to take closure of the convex hull in order to apply the Filippov-Wa$\dot{\mathrm z}$ewski theorem.}~(\ref{sysc}). Therefore, from a practical point of view, there is little difference between the two formulations for the purposes of MCI set computation; see Section~3.2 and Appendices B and C of~\cite{roa} for a detailed discussion on this subtle issue. The simplest assumption under which the MCI sets for both systems coincide is $f(x,U)$ being convex for all $x$, which is in particular true for input-affine systems of the form $\dot{x} = f(x)+g(x)u$ with $U$ convex.

The maximum controlled invariant (MCI) set is defined as
\[
	X_I := \Big\{ x_0 \in X \: : \:  \exists\: x(\cdot)\:\: \mathrm{s.t.}\:\: \dot{x}(t)\in \mathrm{conv}\, f(x(t), U)\;\text{a.e.},\; x(t)\in X\, \forall\, t\in [0,\infty)\Big\},
\]
where $x(\cdot)$ is required to be absolutely continuous and \text{a.e.} stands for ``almost everywhere'' with respect to the Lebesgue measure on $[0,\infty)$.


In words, the MCI set is the set of all initial states for which there exists a trajectory of the convexified inclusion~(\ref{sysc}) which remains in $X$ ad infinitum.

\section{Occupation measures}\label{sec:om}

In this section we introduce the concept of occupation measures which is the centrepiece of our approach.

\subsection{Discrete time}
Given a discount factor $\alpha \in (0,1)$, an initial condition $x_0$ and an admissible control sequence $\{u_{t|x_0}\}_{t=0}^\infty$ such that the associated state sequence $\{x_{t|x_0}\}_{t=0}^\infty$ remains in $X$ for all time, we define the \emph{discounted occupation measure} $\mu(\cdot \mid x_0)\in M(X\times U)$ as
\begin{equation}\label{eq:discOM}
	\mu(A\times B \mid   x_0) := \sum_{t=0}^\infty \alpha^t I_{A\times B}(x_{t|x_0},u_{t|x_0})
 \end{equation}
 for all sets $A\subset X$ and $B\subset  U$.
 
 
In words, the discounted occupation measure measures the (discounted) number of visits of the state-control pair trajectory $(x(\cdot\!\mid\! x_0),\nu(\cdot\!\mid\! x_0))$ to subsets of $X\times U$. The discounting in the definition of the occupation measure ensures that $\mu(A\times B\mid x_0)$ is always finite; in fact we have $\mu(X\times U\mid x_0) = (1-\alpha)^{-1} $.
 
Now suppose that the initial condition is not a single point but an \emph{initial measure}\footnote{The initial measure $\mu_0$ can be thought of as the probability distribution of the initial state, although we do not require the mass of $\mu_0$ to be normalized to one.}  $\mu_0\in M(X)$ and an admissible control sequence is associated to each initial condition from the support of $\mu_0$ in such a way that the corresponding state sequence remains in $X$. Then we define the \emph{average discounted occupation measure} $\mu\in M(X\times U)$ as
 \[
  \mu(A\times B) := \int_X \mu(A\times B \!\mid \!   x_0)\,d\mu_0(x_0).
 \]
 
 The average discounted occupation measure measures the discounted average number of visits in subsets of $X\times U$ of trajectories starting from the initial distribution $\mu_0$.
 
 Now we derive an equation linking the measures $\mu_0$ and $\mu$. This equation will play a key role in subsequent development and in a sense replaces the dynamics equation~(\ref{sysd}). To derive this equation fix an initial condition $x_0\in X$ and a control sequence $\{u_{t|x_0}\}_{t=0}^\infty$ such that the associated state sequence $\{x_{t|x_0}\}_{t=0}^\infty$ stays in $X$. Then for any $v\in C(X)$ we have
\begin{align*}
\int_{X\times U} v(x)\,d\mu(x,u \!\mid\! x_0) &= \sum_{t=0}^\infty \alpha^t v(x_{t|x_0}) = v(x_{0|x_0}) + \alpha\sum_{t=0}^\infty \alpha^tv(x_{t+1|x_0})\\  
&= v(x_{0|x_0}) + \alpha\sum_{t=0}^\infty \alpha^t v(f(x_{t|x_0},u_{t|x_0}))\\ &= v(x_{0|x_0})  + \alpha \int_{X\times U}\hspace{-0.8em}v(f(x,u))\,d\mu(x,u \!\mid\! x_0).
\end{align*}
Integrating w.r.t. $\mu_0$ we arrive at the sought equation
\begin{equation}\label{eq:LiouvDisc}
\int_{X\times U} v(x)\, d\mu(x,u) =
\int_X v(x)\,d\mu_0(x) + \alpha\int_{X\times U} v(f(x,u))\,d\mu(x,u)\quad \forall v\in C(X).
\end{equation}

Note that this is an infinite-dimensional linear equation in variables $(\mu_0,\mu)$.

The following crucial Lemma establishes the connection between the support of any initial measure $\mu_0$ solving~(\ref{eq:LiouvDisc}) and the MCI set $X_I$.
\begin{lemma}\label{lem:corrDisc}
For any pair of measures $(\mu_0,\mu)$ satisfying equation~(\ref{eq:LiouvDisc}) with $\mathrm{spt}\,\mu_0\subset X$ and $\mathrm{spt}\,\mu\subset U\times X$ we have $\mathrm{spt}\,\mu_0 \subset X_I$.
\end{lemma}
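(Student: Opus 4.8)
The plan is to extract from the pair $(\mu_0,\mu)$ a genuine controlled-invariant subset of $X$ that contains $\mathrm{spt}\,\mu_0$. Throughout I use that $\mu_0$ and $\mu$ are nonnegative (as they are in every use of the lemma, occupation measures being nonnegative by construction) and that, for equation~(\ref{eq:LiouvDisc}) to be meaningful, the function $(x,u)\mapsto v(f(x,u))$ must be defined $\mu$-almost everywhere for every $v\in C(X)$; since $\mathrm{spt}\,\mu$ is closed and $f$ continuous, this forces $f(\mathrm{spt}\,\mu)\subseteq X$, i.e.\ $f_\#\mu\in M(X)$, where $f_\#\mu$ denotes the pushforward of $\mu$ under the map $f$.

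First I would rewrite~(\ref{eq:LiouvDisc}) as an identity of measures. Let $\pi:X\times U\to X$ be the projection onto the state coordinate and $\nu:=\pi_\#\mu\in M(X)$ the state marginal of $\mu$. The left-hand side of~(\ref{eq:LiouvDisc}) equals $\int_X v\,d\nu$, and the last term equals $\alpha\int_X v\,d(f_\#\mu)$ by the change-of-variables formula for pushforwards (together with $f_\#\mu$ being supported in $X$). Since a finite Borel measure on the compact set $X$ is determined by its integrals against $C(X)$, equation~(\ref{eq:LiouvDisc}) is equivalent to
\[
\nu \;=\; \mu_0 + \alpha\, f_\#\mu .
\]
All three measures being nonnegative and $\alpha>0$, this immediately gives $\mathrm{spt}\,\mu_0\subseteq\mathrm{spt}\,\nu$ and $\mathrm{spt}(f_\#\mu)\subseteq\mathrm{spt}\,\nu$.

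Next I would identify the controlled-invariant set. Put $S:=\mathrm{spt}\,\mu$, a compact subset of $X\times U$, and $K:=\mathrm{spt}\,\nu$. Using the elementary fact that $\mathrm{spt}(T_\#\mu)=\overline{T(\mathrm{spt}\,\mu)}$ for continuous $T$, together with compactness of $S$ (so images under continuous maps are closed), we get $K=\pi(S)$ and $\mathrm{spt}(f_\#\mu)=f(S)$. The two support inclusions above then read
\[
\mathrm{spt}\,\mu_0\subseteq K=\pi(S)\subseteq X, \qquad f(S)\subseteq K .
\]
Hence $K$ is controlled invariant for system~(\ref{sysd}): for every $x\in K=\pi(S)$ there is some $u\in U$ with $(x,u)\in S$, and then $f(x,u)\in f(S)\subseteq K$, while $x\in X\subseteq X$ and $u\in U$. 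Starting from any $x_0\in K$ and applying this one-step selection repeatedly (dependent choice) produces an admissible control sequence whose state trajectory stays in $K\subseteq X$ for all $t$; thus $x_0\in X_I$. Therefore $K\subseteq X_I$, and since $\mathrm{spt}\,\mu_0\subseteq K$ we conclude $\mathrm{spt}\,\mu_0\subseteq X_I$.

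The potentially delicate points are all bookkeeping: justifying $f_\#\mu\in M(X)$ (handled above via well-posedness of~(\ref{eq:LiouvDisc})), the identity $\mathrm{spt}(T_\#\mu)=\overline{T(\mathrm{spt}\,\mu)}$ (an open set missing $T(\mathrm{spt}\,\mu)$ has $T_\#\mu$-measure zero, and conversely), and the passage from ``for all $v\in C(X)$'' to equality of measures on compact $X$ via the Riesz representation theorem. The conceptual content — that the state marginal of the occupation measure dominates $\mu_0$ and is mapped into itself by the dynamics — is exactly what makes its support a controlled-invariant set.
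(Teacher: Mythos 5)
Your proof is correct, and it takes a genuinely different --- and considerably more elementary --- route than the paper's. The paper (Appendix~A) disintegrates $\mu$ as $d\nu(u\!\mid\!x)\,d\tilde\mu(x)$, invokes a result on discrete-time Markov control processes to realize $\mu$ as the discounted occupation measure of the Markov chain induced by the stationary randomized policy $\nu$, and then extracts a deterministic selection $u(x)\in\mathrm{spt}\,\nu(\cdot\!\mid\!x)$ via a weak-* approximation of $\delta_{u(x)}$ by normalized restrictions of $\nu(\cdot\!\mid\!x)$ to shrinking balls, using the Portmanteau lemma to conclude $f(x,u(x))\in\mathrm{spt}\,\tilde\mu$. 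You instead recast (\ref{eq:LiouvDisc}) directly as the measure identity $\pi_\#\mu=\mu_0+\alpha f_\#\mu$ (legitimate by Riesz representation on the compact set $X$, once you observe that well-posedness of the term $\int v(f(x,u))\,d\mu$ forces $f(\mathrm{spt}\,\mu)\subset X$ --- the right way to handle a point the paper leaves implicit) and read off from the support inclusions that $K=\pi(\mathrm{spt}\,\mu)$ is controlled invariant, producing for each $x_0\in K$ an admissible, generally non-stationary, control sequence by dependent choice. Both arguments rest on the same observation --- the state marginal of $\mu$ dominates $\mu_0$ and its support is controlled invariant --- but yours dispenses with disintegration, the Markov-chain machinery and measurable selection, needs only $\alpha>0$ rather than the summability of $\sum_t\alpha^t$, and sidesteps a delicate step in the paper's argument (passing from ``$\nu(\cdot\!\mid\!x)$ keeps $\mathrm{spt}\,\tilde\mu$ invariant for $\tilde\mu$-almost every $x$'' to ``for every $x\in\mathrm{spt}\,\tilde\mu$''). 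What the heavier route buys is a stationary feedback policy and the structural fact that $\mu$ is a bona fide Markov-chain occupation measure, which mirrors the continuous-time superposition argument of Appendix~B; neither is needed for the statement itself, since the definition of $X_I$ only asks for the existence of some admissible sequence. Your explicit flagging of the nonnegativity of $(\mu_0,\mu)$ is also appropriate: the lemma is stated for ``measures'' but is only used, and only true, in the nonnegative cone.
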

\begin{proof}
A detailed proof is in Appendix~A.
\end{proof}

\subsection{Continuous time}
Given an initial condition $x_0$ and a trajectory $x(\cdot\!\mid\! x_0)$ of the inclusion~(\ref{sysc}) that remains in $X$ for all $t\ge0$, there exists an admissible time-varying measure-valued relaxed control $\nu_t(\cdot\!\mid\! x_0)\in M(U)$, $\nu_t(U\!\mid\!x_0) = 1$, such that
\[
\dot{x}(t) = \int_U f(x(t),u)\,d\nu_t(u\!\mid\! x_0)
\]
almost everywhere with respect to the Lebesgue measure on $[0,\infty)$. This follows from the definition of the convex hull (in fact, for each $t$, $\nu_t(\cdot\!\mid\! x_0)$ can be taken to be a convex combination of finitely many Dirac measures).

Then, given a discount factor $\beta > 0$, we define the \emph{discounted occupation measure} $\mu(\cdot\nobreak\mid\nobreak x_0)\in M(X\times U)$ as
\[
	\mu(A\times B \mid x_0) := \int_0^\infty \int_U e^{-\beta t} I_{A\times B}(x(t\!\mid\! x_0),u)\,d\nu_t(u\!\mid\! x_0)\,dt
 \]
 for all sets $A\subset X$ and $B\subset  U$.
 
In words, the discounted occupation measure measures the (discounted) time spent by the state-control pair trajectory $(x(\cdot\!\mid\! x_0),\nu(\cdot\!\mid\! x_0))$ in subsets of $X\times U$. The discounting in the definition of the occupation measure ensures that $\mu(A\times B\mid x_0)$ is always finite; in fact we have $\mu(X\times U\mid x_0) = \beta^{-1}$.
 
Now suppose that the initial condition is not a single point but an \emph{initial measure}\footnote{The initial measure $\mu_0$ can be thought of as the probability distribution of the initial state, although we do not require the mass of $\mu_0$ to be normalized to one.}  $\mu_0\in M(X)$ and a state trajectory that remains in $X$ along with an admissible relaxed control is associated to each initial condition from the support of $\mu_0$. Then we define the \emph{average discounted occupation measure} $\mu\in M(X\times U)$ as
 \[
  \mu(A\times B) := \int_X \mu(A\times B \mid x_0)\,d\mu_0(x_0).
 \]
 
Now we derive an equation linking the measures $\mu_0$ and $\mu$. This equation will play a key role in subsequent development and in a sense replaces the dynamics equation~(\ref{sysc}). To derive the equation, fix an initial condition $x_0\in X$, a trajectory $x(\cdot \!\mid\! x_0)$ that remains in $X$ with an associated admissible relaxed control $\nu_t(\cdot \!\mid\! x_0)$. Then for any $v\in C^1(X)$ integration by parts yields
\begin{align*}
\int_{X\times U}\mathrm{grad}\,v\cdot f(x,u)\,d\mu(x,u\!\mid\! x_0) &= \int_0^\infty\int_U e^{-\beta t}\mathrm{grad}\,v\!\cdot\! f(x(t\mid x_0),u)\,d\nu_t(u \!\mid\! x_0)\,dt\\  &= \int_0^\infty e^{-\beta t}\frac{d}{dt}v(x(t \!\mid\! x_0))\,dt\\ &= \beta \int_0^\infty e^{-\beta t}v(x(t \!\mid\! x_0))\,dt - v(x(0 \!\mid\! x_0))\\
&=  \beta \int_{X\times U} v(x)\,d\mu(x,u\!\mid\! x_0) - v(x(0\!\mid\! x_0)),
\end{align*}
where the boundary term at infinity vanishes due to discounting and the fact that $X$ is bounded. Integrating with respect to $\mu_0$ then gives the sought equation
\begin{equation}\label{eq:LiouvCont}
\beta\int_{X\times U} v(x)\, d\mu(x,u) =
\int_X v(x)\,d\mu_0(x) + \int_{X\times U} \mathrm{grad}\, v\cdot f(x,u)\,d\mu(x,u)\quad \forall v\in C^1(X).
\end{equation}

Note that this is an infinite-dimensional linear equation in variables $(\mu_0,\mu)$.

The following crucial Lemma establishes the connection between the support of any initial measure satisfying~(\ref{eq:LiouvCont}) and the MCI set $X_I$.
\begin{lemma}\label{lem:corrCont}
For any pair of measures $(\mu_0,\mu)$ satisfying equation~(\ref{eq:LiouvCont}) with $\mathrm{spt}\,\mu_0\subset X$ and $\mathrm{spt}\,\mu\subset U\times X$ we have $\lambda(\mathrm{spt}\,\mu_0) \le \lambda(X_I)$.
\end{lemma}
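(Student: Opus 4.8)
The plan is to reconstruct, from the abstract pair $(\mu_0,\mu)$, a whole family of trajectories of~(\ref{sysc}) issued from $\mu_0$ and remaining in $X$ forever, and then to exploit closedness of $X_I$. Since~(\ref{eq:LiouvCont}) is linear and homogeneous in $(\mu_0,\mu)$ we may assume $\mu_0(X)=1$ (otherwise the statement is trivial); testing~(\ref{eq:LiouvCont}) with $v\equiv 1$ then gives $\mu(X\times U)=\beta^{-1}$, so both measures are finite. Disintegrating $\mu=\int_X\mu^x\,d\hat\mu(x)$ with respect to its marginal $\hat\mu$ on $X$, with each $\mu^x$ a probability on $U$, put $\bar f(x):=\int_U f(x,u)\,d\mu^x(u)$. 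As a barycentre of a probability carried by the compact set $f(x,U)$, $\bar f(x)\in\mathrm{conv}\, f(x,U)$; moreover $\bar f$ is a bounded Borel vector field on $X$. In terms of $\bar f$, equation~(\ref{eq:LiouvCont}) reads $\beta\int v\,d\hat\mu=\int v\,d\mu_0+\int\mathrm{grad}\,v\cdot\bar f\,d\hat\mu$ for all $v\in C^1(X)$, i.e.\ it is precisely the stationary discounted continuity equation with drift $\bar f$, source $\mu_0$, and occupation measure $\hat\mu$.

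The key step will be a superposition argument (carried out in the appendix; cf.\ the finite-horizon treatment of~\cite{roa} and the superposition principle for the continuity equation referenced there): this stationary identity forces the existence of a probability measure $\eta$ on the absolutely continuous curves $\gamma:[0,\infty)\to\mathbb{R}^n$, concentrated on solutions of $\dot\gamma(t)=\bar f(\gamma(t))$ a.e., such that $\mu_0$ is the law of $\gamma(0)$ under $\eta$ and $\hat\mu$ is obtained from the laws of $\gamma(t)$ by integration against $e^{-\beta t}\,dt$. Because $\bar f(x)\in\mathrm{conv}\, f(x,U)$, $\eta$-a.e.\ $\gamma$ is a trajectory of~(\ref{sysc}). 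Since $\mathrm{spt}\,\mu\subset X\times U$ gives $\mathrm{spt}\,\hat\mu\subset X$, the above representation of $\hat\mu$ forces $\gamma(t)\in X$ for Lebesgue-a.e.\ $t$, $\eta$-almost surely; as $X$ is closed and $\gamma$ continuous, the exceptional time set is relatively open and Lebesgue-null in the connected set $[0,\infty)$, hence empty. Together with $\mathrm{spt}\,\mu_0\subset X$ this shows that for $\eta$-a.e.\ $\gamma$ one has $\gamma(t)\in X$ for all $t\ge0$, whence $\gamma(0)\in X_I$.

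Consequently $\mu_0(X\setminus X_I)=\eta(\{\gamma:\gamma(0)\notin X_I\})=0$, so $\mu_0$ is concentrated on $X_I$. Finally, $X_I$ is the viability kernel of the compact set $X$ under the map $x\mapsto\mathrm{conv}\, f(x,U)$, which is upper semicontinuous with nonempty compact convex values and bounded on $X$ (a Marchaud map), hence $X_I$ is closed by the classical viability theory~\cite{aubin}; a closed set of full $\mu_0$-measure contains $\mathrm{spt}\,\mu_0$, so $\mathrm{spt}\,\mu_0\subset X_I$ and in particular $\lambda(\mathrm{spt}\,\mu_0)\le\lambda(X_I)$. The main obstacle is the superposition step: extracting a genuine measure on trajectory space from the single stationary identity~(\ref{eq:LiouvCont}) requires the superposition principle for continuity equations with a merely bounded Borel drift $\bar f$ (no uniqueness of integral curves being available), together with some bookkeeping to move between the discounted stationary form and a time-parametrised family of marginals over the infinite horizon; the remaining measurability points and the upgrade from an almost-everywhere to an everywhere statement in $t$ are routine once this is in place.
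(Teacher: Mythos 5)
Your overall strategy coincides with the paper's: average the control against a disintegration of $\mu$ to obtain a relaxed vector field $\bar f(x)\in\mathrm{conv}\,f(x,U)$, superpose the resulting (discounted) continuity equation into a measure $\eta$ on trajectory space with initial law $\mu_0$ and occupation measure $\hat\mu$, and conclude that $\eta$-almost every trajectory stays in $X$ forever, so that $\mu_0$ is concentrated on $X_I$. Your final step is actually cleaner than the paper's: by invoking closedness of the viability kernel of a compact set under a Marchaud map (which holds here, since $x\mapsto\mathrm{conv}\,f(x,U)$ is upper semicontinuous with nonempty compact convex values and bounded on $X$), you pass from ``$\mu_0$ concentrated on $X_I$'' to $\mathrm{spt}\,\mu_0\subset X_I$, which is stronger than the stated volume inequality and silently repairs a looseness in the paper's own concluding contradiction argument (which conflates Lebesgue-positivity of $\mathrm{spt}\,\mu_0\setminus X_I$ with $\mu_0$-positivity). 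Your upgrade from ``$\gamma(t)\in X$ for a.e.\ $t$'' to ``for all $t$'' via openness of the exceptional time set is also correct.

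The genuine gap is exactly where you flag it, and it is not a routine bookkeeping matter: the superposition principle of Ambrosio applies to a \emph{time-parametrised} family of marginals $\{\mu_t\}_{t\ge0}$ solving the continuity equation $\partial_t\mu_t+\mathrm{div}(\bar f\mu_t)=0$, not to the single stationary discounted identity $\beta\int v\,d\hat\mu=\int v\,d\mu_0+\int\mathrm{grad}\,v\cdot\bar f\,d\hat\mu$. Manufacturing such a family with $\hat\mu=\int_0^\infty e^{-\beta t}\mu_t\,dt$ and $\mu_{t=0}=\mu_0$ from the stationary identity alone is the entire content of the paper's Lemma~\ref{lem:corrContAux} in Appendix~B, and the paper does not obtain it by elementary means: it compactifies the state space, sets up a relaxed martingale problem in the sense of Bhatt and Borkar, verifies their Conditions 1--3 for the generator $w\mapsto\mathrm{grad}\,w\cdot f$, and uses their characterization of discounted occupation measures of controlled Markov processes to produce the marginals $\mu_t$ satisfying the integrated continuity equation; only then is Ambrosio's superposition principle applied. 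A further subtlety your sketch elides is that the drift $\bar f$ obtained by disintegration is merely Borel and defined $\hat\mu$-a.e., so one needs the version of the superposition principle valid for bounded Borel vector fields without uniqueness of integral curves, which is again why the paper routes through the measure-theoretic machinery rather than any ODE well-posedness. As written, your proof asserts the conclusion of the paper's Appendix~B rather than proving it; everything downstream of that assertion is sound.
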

\begin{proof}
A detailed proof is in Appendix~B.
\end{proof}

\section{Primal LP}\label{sec:primalLP}
In this section we show how the MCI set computation problem can be cast as an infinite-dimensional
LP problem in the cone of nonnegative measures. As in~\cite{roa}, the basic idea is to maximize the mass of the initial measure $\mu_0$ subject to the constraint that it be dominated by the Lebesgue measure, that is, $\mu_0 \le \lambda$. System dynamics is captured by the equations~(\ref{eq:LiouvDisc}) and~(\ref{eq:LiouvCont}) for discrete and continuous times, respectively; state and input constraints are expressed through constraints on the supports of the initial and occupation measure. The constraint that  $\mu_0 \le \lambda$ can be equivalently rewritten as $\mu_0+\hat{\mu}_0=\lambda$ for some nonnegative slack measure $\hat{\mu}_0\in M(X)$. This constraint is in turn equivalent to $\int_X w(x)\,d\mu_0(x)+\int_X w(x)\,d\hat{\mu}_0(x) = \int_X w(x)\,d\lambda(x)$ for all $w\in C(X)$. These considerations lead to the following primal LPs.

\subsection{Discrete time}

The primal LP in discrete time reads
\begin{equation}\label{rrlpd}
\begin{array}{rclll}
p^* & = & \sup & \mu_0(X) \\
&& \mathrm{s.t.} & \int v(x)\,d\mu(x,u) = \int v(x)\,d\mu_0(x) + \alpha\int v(f(x,u))\,d\mu(x,u)\quad &\forall\, v\in C(X) \\
&&& \int w(x)\,d\mu_0(x) + \int w(x)\,d\hat{\mu}_0(x) = \int w(x)\,d\lambda(x) & \forall\, w\in C(X)\\
&&& \mu\geq 0, \: \mu_0\geq 0,\: \hat{\mu}_0\ge0\\
&&& \mathrm{spt}\:\mu \subset X\times U, \:\: \mathrm{spt}\:\mu_0 \subset X, \:\: \mathrm{spt}\: \hat{\mu}_0 \subset X,
\end{array}
\end{equation}
where the supremum is over the vector of measures $(\mu,\mu_0,\hat{\mu}_0)\in M(X\times U)\times M(X)\times M(X)$.

This is an infinite-dimensional LP in the cone of nonnegative Borel measures. The following Lemma, which is our main theoretical result, relates an optimal solution of this LP to the MCI set $X_I$.

\begin{theorem} 
The optimal value of LP problem (\ref{rrlpd}) is equal to the volume of the MCI set $X_I$, that is, $p^*=\lambda(X_I)$.
Moreover, the supremum is attained by the restriction of the Lebesgue measure to the MCI set $X_I$.
\end{theorem}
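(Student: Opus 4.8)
The plan is to establish the two inequalities $p^* \le \lambda(X_I)$ and $p^* \ge \lambda(X_I)$ separately, and then to verify optimality of the proposed maximizer along the way.

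For the upper bound $p^* \le \lambda(X_I)$, I would argue as follows. Let $(\mu,\mu_0,\hat\mu_0)$ be any feasible triple. The first (Liouville) constraint together with the support conditions $\mathrm{spt}\,\mu_0 \subset X$ and $\mathrm{spt}\,\mu \subset X\times U$ is exactly the hypothesis of Lemma~\ref{lem:corrDisc}, which gives $\mathrm{spt}\,\mu_0 \subset X_I$. The second constraint, $\mu_0 + \hat\mu_0 = \lambda$ on $X$ with $\hat\mu_0 \ge 0$, forces $\mu_0 \le \lambda$ as measures. Hence $\mu_0(X) = \mu_0(\mathrm{spt}\,\mu_0) \le \lambda(\mathrm{spt}\,\mu_0) \le \lambda(X_I)$, where the middle inequality uses $\mu_0 \le \lambda$ and the last uses $\mathrm{spt}\,\mu_0 \subset X_I$. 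Taking the supremum over feasible points yields $p^* \le \lambda(X_I)$. (In continuous time the same scheme works but one invokes Lemma~\ref{lem:corrCont}, which directly delivers $\lambda(\mathrm{spt}\,\mu_0) \le \lambda(X_I)$, and the measurability subtleties of $\mathrm{spt}\,\mu_0$ are handled there.)

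For the lower bound, and simultaneously for the "moreover" claim, I would exhibit a feasible triple achieving the value $\lambda(X_I)$, namely $\mu_0 := \lambda|_{X_I}$ (the restriction of Lebesgue measure to $X_I$), $\hat\mu_0 := \lambda|_{X\setminus X_I}$, and $\mu$ the average discounted occupation measure generated by running, from each $x_0 \in X_I$, some admissible control sequence (resp. trajectory of the inclusion) that keeps the state in $X$ forever — such a control exists by the very definition of $X_I$. One must check: (i) $\mu_0 + \hat\mu_0 = \lambda$ on $X$ with all three measures nonnegative and supported where required — immediate from the partition $X = X_I \cup (X\setminus X_I)$; (ii) $\mu$ is a well-defined finite nonnegative measure on $X\times U$ — finiteness follows from the discounting, since $\mu(X\times U) = (1-\alpha)^{-1}\lambda(X_I)$ (resp. $\beta^{-1}\lambda(X_I)$); (iii) the Liouville equation~(\ref{eq:LiouvDisc}) (resp.~(\ref{eq:LiouvCont})) holds — this is precisely the computation carried out in Section~\ref{sec:om} to \emph{derive} those equations, now read for this particular $(\mu_0,\mu)$. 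Then $\mu_0(X) = \lambda(X_I)$, so $p^* \ge \lambda(X_I)$, and combined with the upper bound we get $p^* = \lambda(X_I)$ with the supremum attained at $\lambda|_{X_I}$.

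The main technical obstacle is measurability and the existence of a jointly measurable selection of admissible controls: in constructing $\mu$ one integrates the single-initial-condition occupation measures $\mu(\cdot\mid x_0)$ against $d\mu_0(x_0)$, which presupposes that $x_0 \mapsto \mu(A\times B\mid x_0)$ is measurable, and this in turn requires choosing the admissible controls $\{u_{t|x_0}\}$ (resp. trajectories $x(\cdot\mid x_0)$) in a measurable fashion over $X_I$. This is a measurable-selection argument — available from standard results (e.g. via Filippov's lemma or the Kuratowski–Ryll-Nardzewski theorem applied to the set-valued map of admissible infinite-horizon trajectories), and the measurability of $X_I$ itself is part of what must be confirmed — and I would expect the detailed verification to be deferred to an appendix, in the same spirit as Lemmas~\ref{lem:corrDisc} and~\ref{lem:corrCont}. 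Everything else is bookkeeping with the constraints already set up in~(\ref{rrlpd}).
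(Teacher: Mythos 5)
Your proposal is correct and follows essentially the same route as the paper: the upper bound $p^*\le\lambda(X_I)$ is obtained from Lemma~\ref{lem:corrDisc} combined with the domination $\mu_0\le\lambda$, and the lower bound together with attainment is obtained by exhibiting $\mu_0=\lambda|_{X_I}$ as feasible, with the occupation measure built from admissible trajectories guaranteed by the definition of $X_I$. Your explicit flagging of the measurable-selection issue in constructing the average occupation measure is a point the paper passes over by deferring to the arguments of Theorem~1 in \cite{roa}, but it does not change the structure of the argument.
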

\begin{proof}
The proof follows from Lemma~\ref{lem:corrDisc} by the same arguments as Theorem~1 in \cite{roa}.
By definition of the MCI set $X_I$, for any initial condition $x_0\in X_I$ there exists an admissible control sequence such that the associated state sequence remains in $X$. Therefore for any initial measure $\mu_0 \le \lambda$ with $\mathrm{spt}\,\mu_0 \subset X_I$ there exist a discounted occupation measure $\mu$ with $\mathrm{spt}\,\mu\subset X\times U$ and a slack measure $\hat{\mu}_0$ with $\mathrm{spt}\,\hat{\mu}_0 \subset X$ such that the constraints of problem~(\ref{rrlpd}) are satisfied. One such measure $\mu_0$ is the restriction of the Lebesgue measure to $X_I$, and therefore $p^*\ge \lambda(X_I)$. The fact $p^*\le \lambda(X_I)$ follows from Lemma~\ref{lem:corrDisc}.
\end{proof}

\subsection{Continuous time}

The primal LP in continuous time reads
\begin{equation}\label{rrlpc}
\begin{array}{rclll}
p^* & = & \sup & \mu_0(X) \\
&& \mathrm{s.t.} & \beta\int v(x)\, d\mu(x,u) = \int v(x)\,d\mu_0(x) + \int \mathrm{grad}\, v\cdot f(x,u)\,d\mu(x,u)\quad & \forall v\in C^1(X) \\
&&& \int w(x)\,d\mu_0(x) + \int w(x)\,d\hat{\mu}_0(x) = \int w(x)\,d\lambda(x) & \forall\, w\in C(X)\\
&&& \mu\geq 0, \: \mu_0\geq 0,\: \hat{\mu}_0\ge0\\
&&& \mathrm{spt}\:\mu \subset X\times U, \:\: \mathrm{spt}\:\mu_0 \subset X, \:\: \mathrm{spt}\: \hat{\mu}_0 \subset X,
\end{array}
\end{equation}
where the infimum is over the vector of measures $(\mu,\mu_0,\hat{\mu}_0)\in M(X\times U)\times M(X)\times M(X)$.

This is an infinite-dimensional LP in the cone of nonnegative Borel measures. The following Lemma, which is our main theoretical result, relates an optimal solution of this LP to the MCI set $X_I$.

\begin{theorem} 
The optimal value of LP problem (\ref{rrlpc}) is equal to the volume of the MCI set $X_I$, that is, $p^*=\lambda(X_I)$.
Moreover, the supremum is attained by the restriction of the Lebesgue measure to the MCI set $X_I$.
\end{theorem}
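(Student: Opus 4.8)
The plan is to mirror the discrete-time argument, replacing Lemma~\ref{lem:corrDisc} by Lemma~\ref{lem:corrCont} and taking care of the fact that the latter only controls the Lebesgue measure of $\mathrm{spt}\,\mu_0$ rather than the set itself.

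First I would establish the lower bound $p^*\ge\lambda(X_I)$. Take $\mu_0$ to be the restriction of $\lambda$ to $X_I$ and set $\hat{\mu}_0:=\lambda-\mu_0$, i.e.\ the restriction of $\lambda$ to $X\setminus X_I$; both are nonnegative, supported in $X$, and satisfy the domination constraint $\mu_0+\hat{\mu}_0=\lambda$ by construction (equivalently, the second equality constraint of~(\ref{rrlpc}) holds). By definition of $X_I$, for every $x_0\in X_I$ there is an absolutely continuous trajectory $x(\cdot\!\mid\!x_0)$ of the convexified inclusion that remains in $X$, together with an admissible relaxed control $\nu_t(\cdot\!\mid\!x_0)$. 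A measurable selection argument (as in the proof of Theorem~1 of~\cite{roa}) lets me choose such a trajectory–control pair for $\mu_0$-almost every $x_0$ so that the associated average discounted occupation measure $\mu\in M(X\times U)$ is a well-defined nonnegative Borel measure supported on $X\times U$. Integrating the per-initial-condition identity derived in Section~\ref{sec:om} against $\mu_0$ then shows that $(\mu,\mu_0,\hat{\mu}_0)$ satisfies the Liouville equation~(\ref{eq:LiouvCont}); the interchange of integration with respect to $t$ and $\mu_0$ is legitimate because $\mu(X\times U\!\mid\!x_0)=\beta^{-1}$ and $X$ is bounded. Hence $(\mu,\mu_0,\hat{\mu}_0)$ is feasible for~(\ref{rrlpc}) with objective value $\mu_0(X)=\lambda(X_I)$.

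Next I would prove the upper bound $p^*\le\lambda(X_I)$. Let $(\mu,\mu_0,\hat{\mu}_0)$ be any feasible triple. The second constraint together with $\hat{\mu}_0\ge0$ forces $\mu_0\le\lambda$, and since $\mu_0$ is concentrated on $\mathrm{spt}\,\mu_0$ this gives $\mu_0(X)=\mu_0(\mathrm{spt}\,\mu_0)\le\lambda(\mathrm{spt}\,\mu_0)$. Lemma~\ref{lem:corrCont} yields $\lambda(\mathrm{spt}\,\mu_0)\le\lambda(X_I)$, whence $\mu_0(X)\le\lambda(X_I)$; taking the supremum over feasible triples gives $p^*\le\lambda(X_I)$. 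Combining the two bounds gives $p^*=\lambda(X_I)$, and the triple constructed in the first step shows the supremum is attained, with $\mu_0$ the restriction of $\lambda$ to $X_I$.

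The main obstacle is purely measure-theoretic and concentrated in the first step: constructing the average occupation measure $\mu$ requires selecting the trajectory–relaxed-control pairs $x_0\mapsto\big(x(\cdot\!\mid\!x_0),\nu_\cdot(\cdot\!\mid\!x_0)\big)$ in a $\mu_0$-measurable way, which rests on a measurable selection theorem for the solution map of the differential inclusion (and uses that $X_I$ is analytic, hence universally measurable, so that $\lambda(X_I)$ is well defined). This is where the continuous-time case is genuinely more delicate than the discrete-time one, but it is exactly the selection argument already carried out in~\cite{roa}; everything else — verifying feasibility via~(\ref{eq:LiouvCont}), the domination inequality, and the application of Lemma~\ref{lem:corrCont} — is routine.
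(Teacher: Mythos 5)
Your proposal is correct and follows essentially the same route as the paper: the lower bound by exhibiting $\mu_0=\lambda|_{X_I}$ with its associated occupation and slack measures as a feasible triple (the paper defers this to the discrete-time argument), and the upper bound by combining the domination $\mu_0\le\lambda$ with Lemma~\ref{lem:corrCont} to get $\mu_0(X)\le\lambda(\mathrm{spt}\,\mu_0)\le\lambda(X_I)$. You have merely made explicit the measurable-selection step and the passage from the support bound to the mass bound, both of which the paper leaves implicit.
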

\begin{proof}
The fact that $\mu_0$ equal to the restriction of the Lebesgue measure to $X_I$ is feasible in~(\ref{rrlpc}) (and therefore $p^*\ge \lambda(X_I)$) follows by the same arguments as in discrete time. The fact that $p^*\le \lambda(X_I)$ follows from Lemma~\ref{lem:corrCont}.
\end{proof}

\section{Dual LP}\label{sec:dualLP}
In this section we derive LPs dual to the primal LPs~(\ref{rrlpd}) and (\ref{rrlpc}). Since the primal LPs are in the space of measures, the dual LPs will be on the space of continuous functions. Super-level sets of feasible solutions to these LPs then provide outer approximations to the MCI sets, both in discrete and in continuous time. Both duals can be derived by standard infinite-dimensional LP duality theory; see~\cite{roa} for a derivation in a similar setting or~\cite{anderson} for a general theory of infinite-dimensional linear programming.

\subsection{Discrete time}

The dual LP in discrete time reads
\begin{equation}\label{vlpd}
\begin{array}{rclll}
d^* & = & \inf & \displaystyle\int_{X} w(x)\, d\lambda(x) \\
&& \mathrm{s.t.} & \alpha v(f(x,u)) \leq v(x), \:\: &\forall\, (x,u) \in X\times U \\
&&& w(x) \ge v(x) + 1, \:\: &\forall\, x \in X \\
&&& w(x) \geq 0, \:\: &\forall\, x \in X,
\end{array}
\end{equation}
where the infimum is over the pair of functions $(v,w)\in C(X)\times C(X)$.

The following key observation shows that the unit super-level set of any function $w$ feasible in~(\ref{vlpd}) provides an outer-approximation to $X_I$.

\begin{lemma}\label{lem:v0d}
Any feasible solution to problem~(\ref{vlpd}) satisfies $v \ge 0$ and $w\ge 1$ on $X_I$.
\end{lemma}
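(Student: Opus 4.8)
The plan is to fix any feasible $(v,w)$ for~(\ref{vlpd}) and an arbitrary point $x_0\in X_I$, and to show $v(x_0)\ge 0$; the bound $w\ge 1$ on $X_I$ then follows immediately from the second constraint $w\ge v+1$. By definition of the MCI set, there is an admissible control sequence $\{u_{t|x_0}\}_{t=0}^\infty$ with the associated state sequence $\{x_{t|x_0}\}_{t=0}^\infty$ remaining in $X$ for all $t$, and with $x_{0|x_0}=x_0$. Since $(x_{t|x_0},u_{t|x_0})\in X\times U$ for every $t$, the first constraint $\alpha v(f(x,u))\le v(x)$ applies along the trajectory and gives $\alpha v(x_{t+1|x_0})\le v(x_{t|x_0})$, hence by induction $\alpha^t v(x_{t|x_0})\le v(x_0)$ for all $t\ge 0$.

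The remaining step is to let $t\to\infty$ in the inequality $v(x_0)\ge \alpha^t v(x_{t|x_0})$. Here I would use that $X$ is compact and $v\in C(X)$ is continuous, so $v$ is bounded on $X$, say $|v|\le M$ on $X$; since $x_{t|x_0}\in X$ for all $t$, we get $\alpha^t v(x_{t|x_0})\ge -\alpha^t M$, and because $\alpha\in(0,1)$ this lower bound tends to $0$. Combining, $v(x_0)\ge \lim_{t\to\infty}(-\alpha^t M)=0$. As $x_0\in X_I$ was arbitrary, $v\ge 0$ on $X_I$, and then $w\ge v+1\ge 1$ on $X_I$ as claimed.

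I do not anticipate a serious obstacle here: the argument is a short telescoping/induction along an admissible trajectory followed by a limit that is controlled purely by boundedness of $v$ on the compact set $X$ and the contraction factor $\alpha<1$. The only point requiring mild care is that the trajectory is only guaranteed to stay in $X$ (not to converge), which is exactly why discounting is essential — without the factor $\alpha^t$ one could not discard the tail term. This is the continuous-function analogue of Lemma~\ref{lem:corrDisc}, run in the reverse direction.
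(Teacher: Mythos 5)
Your proof is correct and follows essentially the same route as the paper's: iterate the constraint $\alpha v(f(x,u))\le v(x)$ along an admissible trajectory staying in $X$ to get $v(x_0)\ge \alpha^t v(x_t)$, let $t\to\infty$ using boundedness of $v$ on the compact set $X$, and then deduce $w\ge 1$ from $w\ge v+1$. Your treatment of the limit is in fact slightly more explicit than the paper's, which only remarks that $x_t\in X$ and $X$ is bounded.
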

\begin{proof}
Given any $x_0\in X_I$ there exists a sequence $\{u_t\}_{t=0}^\infty$, $u_t\in U$, such that $x_t\in X$ for all $t$. The first constraint of problem~(\ref{vlpd}) is equivalent to $\alpha v(x_{t+1})\le v(x_t)$, $t\in\{0,1,\ldots\}$. By iterating this inequality we get
\[
v(x_0) \ge \alpha^t v(x_t) \to 0 \quad \text{as}\quad t\to\infty
\]
since $x_t\in X$ and $X$ is bounded. Therefore $v(x_0)\ge0$ and $w(x_0)\ge 1$ for all $x_0\in X_I$.
\end{proof}

The following theorem is instrumental in proving the convergence results of Section~\ref{sec:LMIrelax}.

\begin{theorem}\label{noGapInfd} 
There is no duality gap between primal LP problems (\ref{rrlpd}) on measures
and dual LP problem (\ref{vlpd}) on functions in the sense that
$p^*=d^*$.
\end{theorem}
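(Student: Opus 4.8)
The plan is to recognize (\ref{rrlpd})--(\ref{vlpd}) as a dual pair of infinite-dimensional linear programs in the sense of~\cite{anderson} and to verify the hypotheses of the standard no-duality-gap theorem; this follows the same route as in~\cite{roa}. Concretely, I would let the primal decision variable range over the cone $\mathcal{K}$ of triples of nonnegative measures $(\mu,\mu_0,\hat\mu_0)\in M(X\times U)\times M(X)\times M(X)$, placed in duality with the space $C(X\times U)\times C(X)\times C(X)$ via integration (the weak-$*$ topology). The two equality constraints of~(\ref{rrlpd}) define a linear map $\mathcal{A}$ from $\mathcal{K}$ into $C(X)^*\times C(X)^*$; its pre-adjoint sends a pair of test functions $(v,w)\in C(X)\times C(X)$ to the triple $\big(\alpha\,v(f(x,u))-v(x),\; v(x)+1-w(x),\; -w(x)\big)\in C(X\times U)\times C(X)\times C(X)$, the right-hand side is $(0,\lambda)$, and the primal objective is the pairing of $\mu_0$ with $1_X\in C(X)$ (note $v\circ f\in C(X\times U)$ since $f$ is polynomial and $X\times U$ is compact, so all pairings are well defined and weak-$*$ continuous). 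With these identifications the abstract dual is exactly~(\ref{vlpd}), with value $d^*$ and objective $\int_X w\,d\lambda$. Weak duality $p^*\le d^*$ is immediate: for feasible data, $\int v\,d\mu_0=\int(v-\alpha\,v\circ f)\,d\mu\ge 0$ by the first dual constraint, and $w-v\ge 1$ on $X$ by the second, so $\mu_0(X)\le\int w\,d\mu_0\le\int w\,d\mu_0+\int w\,d\hat\mu_0=\int_X w\,d\lambda$.

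The substance of the argument is a compactness estimate. Testing the first constraint of~(\ref{rrlpd}) with $v\equiv 1$ gives $\mu(X\times U)=\mu_0(X)+\alpha\,\mu(X\times U)$, hence $\mu(X\times U)=(1-\alpha)^{-1}\mu_0(X)$; testing the second constraint with $w\equiv 1$ gives $\mu_0(X)+\hat\mu_0(X)=\lambda(X)$. Since $X$ is compact, $\lambda(X)<\infty$, so every feasible triple satisfies $\mu_0(X)\le\lambda(X)$, $\hat\mu_0(X)\le\lambda(X)$ and $\mu(X\times U)\le(1-\alpha)^{-1}\lambda(X)$; in particular the feasible set is contained in a product of norm balls in $M(X\times U)\times M(X)\times M(X)$, which is weak-$*$ compact by the Banach--Alaoglu theorem. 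That product is intersected with $\mathcal{K}$ (weak-$*$ closed) and with the affine subspaces cut out by the two families of equality constraints, each of which is weak-$*$ closed because it is an intersection over the test functions of level sets of weak-$*$ continuous functionals. Hence the primal feasible set is weak-$*$ compact; it is nonempty because $\mu_0=\lambda|_{X_I}$ (together with a suitable $\mu$ and $\hat\mu_0$) is feasible, as already used in establishing $p^*=\lambda(X_I)$; and since $\mu_0\mapsto\mu_0(X)$ is weak-$*$ continuous, $p^*$ is finite and attained.

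With these facts in hand I would invoke the standard infinite-dimensional LP duality theorem (see~\cite{anderson}, or the instance of it used in~\cite{roa}): weak-$*$ compactness of the primal feasible set, together with finiteness of $p^*$, forces the closedness condition on the constraint-image/value set that is precisely what rules out a duality gap, yielding $p^*=d^*$. The main obstacle is not mathematical depth but bookkeeping: one must check carefully that the abstract dual produced by the chosen theorem coincides \emph{exactly} with~(\ref{vlpd}) --- in particular that the three nonnegativity conditions and their signs match the three cones in $\mathcal{K}$, that no spurious constraints or missing multipliers appear, and that the continuity hypotheses of the theorem hold for the pairings used here. The compactness estimate above is the one genuinely problem-specific step; everything else is routine verification of a template, done with care.
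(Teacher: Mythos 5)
Your proposal is correct and follows essentially the same route as the paper's proof: nonemptiness of the primal feasible set, the boundedness estimate obtained by testing the two equality constraints with $v\equiv 1$ and $w\equiv 1$ (giving $\mu(X\times U)=(1-\alpha)^{-1}\mu_0(X)$ and $\mu_0(X)+\hat\mu_0(X)=\lambda(X)$), and then the standard infinite-dimensional LP duality theorem of Anderson and Nash as instantiated in Theorem~2 of~\cite{roa}. The only cosmetic difference is that the paper certifies nonemptiness with the trivial feasible point $(\mu,\mu_0,\hat\mu_0)=(0,0,\lambda)$ rather than $\mu_0=\lambda|_{X_I}$, which avoids relying on the earlier attainment result.
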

\begin{proof}
Follows by the same arguments as Theorem~2 in~\cite{roa} using standard infinite-dimensional LP duality theory (see, e.g., \cite{anderson}) and the fact that the feasible set of the primal LP is nonempty and bounded in the metric inducing the weak-* topology on $M(X)\times M(X\times U)\times M(X)$. To see non-emptiness, notice that the vector of measures $(\mu_0,\mu, \hat{\mu}_0) = (0,0,\lambda)$ is trivially feasible. To see the boundedness, it suffices to evaluate the equality constraints of~(\ref{rrlpd}) for $v(x) = w(x) = 1$. This gives $\mu_0(X)  + \hat{\mu}_0(X) = \lambda(X) < \infty$ and $\mu(X) = \mu_0(X)/(1-\alpha)$, which, since $\alpha\in (0,1)$ and all measures are nonnegative, proves the assertion.
\end{proof}

\subsection{Continuous time}

The dual LP in continuous time reads
\begin{equation}\label{vlpc}
\begin{array}{rclll}
d^* & = & \inf & \displaystyle\int_{X} w(x)\, d\lambda(x) \\
&& \mathrm{s.t.} & \mathrm{grad}\,v\cdot f(x,u) \leq \beta v(x), \:\: &\forall\, (x,u) \in X\times U \\
&&& w(x) \ge v(x) + 1, \:\: &\forall\, x \in X \\
&&& w(x) \geq 0, \:\: &\forall\, x \in X,
\end{array}
\end{equation}
where the infimum is over the pair of functions $(v,w)\in C^1(X)\times C(X)$.

The following key observation shows that the unit super-level set of any function $w$ feasible in~(\ref{vlpc}) provides an outer-approximation to $X_I$.

\begin{lemma}\label{lem:v0c}
Any feasible solution to problem~(\ref{vlpc}) satisfies $v\ge 0$ and $w\ge 1$ on $X_I$.
\end{lemma}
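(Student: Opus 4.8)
The plan is to mirror the proof of Lemma~\ref{lem:v0d}, replacing the discrete iteration by a differential (Gr\"onwall-type) argument along a trajectory. Fix $x_0\in X_I$. By definition of the MCI set there is an absolutely continuous trajectory $x(\cdot)$ with $x(0)=x_0$, $x(t)\in X$ for all $t\ge0$, together with an admissible relaxed control $\nu_t(\cdot)\in M(U)$, $\nu_t(U)=1$, such that $\dot x(t)=\int_U f(x(t),u)\,d\nu_t(u)$ for almost every $t\ge 0$.

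Next I would introduce $g(t):=e^{-\beta t}v(x(t))$. Since $v\in C^1(X)$ and $x(\cdot)$ is (locally) absolutely continuous, $g$ is (locally) absolutely continuous, and for a.e.\ $t$ the chain rule gives $\dot g(t)=e^{-\beta t}\big(\mathrm{grad}\,v(x(t))\cdot\dot x(t)-\beta v(x(t))\big)$. Because $\nu_t$ is a probability measure on $U$, integrating the first constraint of~(\ref{vlpc}) against $\nu_t$ yields $\mathrm{grad}\,v(x(t))\cdot\dot x(t)=\int_U\mathrm{grad}\,v(x(t))\cdot f(x(t),u)\,d\nu_t(u)\le\int_U\beta v(x(t))\,d\nu_t(u)=\beta v(x(t))$, hence $\dot g(t)\le0$ for a.e.\ $t$. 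Therefore $g$ is nonincreasing, so $v(x_0)=g(0)\ge g(t)=e^{-\beta t}v(x(t))$ for all $t\ge0$.

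Finally, since $X$ is compact and $v$ is continuous, $v$ is bounded on $X$, so $e^{-\beta t}v(x(t))\to0$ as $t\to\infty$; here positivity of the discount factor $\beta$ is essential. Letting $t\to\infty$ gives $v(x_0)\ge0$, and the second constraint of~(\ref{vlpc}) then gives $w(x_0)\ge v(x_0)+1\ge1$. As $x_0\in X_I$ was arbitrary, $v\ge0$ and $w\ge1$ on $X_I$, which is the claim.

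The main obstacle is purely technical: justifying the chain-rule computation of $\dot g$ for a merely absolutely continuous trajectory, and passing the pointwise inequality $\mathrm{grad}\,v\cdot f\le\beta v$ through the relaxed control $\nu_t$ using $\nu_t(U)=1$. This is precisely the manipulation already carried out when deriving~(\ref{eq:LiouvCont}), so it can be invoked with essentially no extra work; one should also recall, as elsewhere in the paper, that $\mathrm{grad}\,v$ is understood in the sense of $C^1(X)$ at boundary points, and that the argument is unaffected if one works directly with the convexified inclusion~(\ref{sysc}) instead of with an explicit relaxed control.
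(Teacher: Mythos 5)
Your proof is correct and follows essentially the same route as the paper: a differential inequality along an admissible trajectory, obtained by integrating the constraint $\mathrm{grad}\,v\cdot f\le\beta v$ against the relaxed control $\nu_t$, followed by discounting and letting $t\to\infty$. The only cosmetic difference is that you establish the bound $v(x_0)\ge e^{-\beta t}v(x(t))$ by showing $e^{-\beta t}v(x(t))$ is nonincreasing, whereas the paper invokes Gr\"onwall's inequality by name; these are the same estimate.
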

\begin{proof}
Given any $x_0\in X_I$ there exists an admissible relaxed control function $\nu_t(\cdot)$, $\nu_t(U) = 1$, such that $x(t)\in X$ for all $t$. For that $x(t)$ we have $\frac{d}{dt}v(x(t)) = \int_U\mathrm{grad}\,v\cdot f(x(t),u)\,d\nu_t(u) \le \int_U \beta v(x(t))\,d\nu_t(u) = \nu_t(U)\beta v(x(t)) = \beta v(x(t))$. Then by Gronwall's inequality $v(x(t)) \le e^{\beta t}v(x_0)$, and consequently
\[
v(x_0) \ge e^{-\beta t}v(x(t)) \to 0\quad \text{as}\quad t\to\infty
\]
since $x(t)\in X$ and $X$ is bounded. Therefore $v(x_0) \ge 0$ and $w(x_0)\ge 1$ for all $x_0\in X_I$.
\end{proof}

The following theorem is instrumental in proving the convergence results of Section~\ref{sec:LMIrelax}.

\begin{theorem}\label{noGapInfc}
There is no duality gap between primal LP problems (\ref{rrlpc}) on measures
and dual LP problem (\ref{vlpc}) on functions in the sense that
$p^*=d^*$.
\end{theorem}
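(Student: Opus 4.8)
The plan is to follow, almost verbatim, the proof of Theorem~\ref{noGapInfd} in discrete time (which in turn mirrors Theorem~2 of \cite{roa}), invoking standard weak-$*$ infinite-dimensional linear programming duality as developed, e.g., in \cite{anderson}. First I would recast the primal LP~(\ref{rrlpc}) in abstract conic form: the decision variable is the triple $\xi=(\mu,\mu_0,\hat\mu_0)$ ranging over the nonnegative cone of $M(X\times U)\times M(X)\times M(X)$, equipped with the weak-$*$ topology whose predual is $C(X\times U)\times C(X)\times C(X)$; the objective $\xi\mapsto\mu_0(X)=\int 1\,d\mu_0$ is weak-$*$ continuous; and the constraints form a single linear equality $\mathcal A\xi=b$ with $b=(0,\lambda)$, where the first block of $\mathcal A$ encodes the Liouville equation~(\ref{eq:LiouvCont}) tested against $v\in C^1(X)$ (note $\mathrm{grad}\,v\cdot f\in C(X\times U)$ since $f$ is polynomial and $v$ is $C^1$), and the second block encodes the domination equality $\mu_0+\hat\mu_0=\lambda$ tested against $w\in C(X)$. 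The adjoint problem is then exactly~(\ref{vlpc}) with dual variables $(v,w)\in C^1(X)\times C(X)$, and weak duality $p^*\le d^*$ is immediate.

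For strong duality it then suffices, by the standard theory, to show that the primal feasible set is nonempty and weak-$*$ compact. Nonemptiness: $(\mu,\mu_0,\hat\mu_0)=(0,0,\lambda)$ satisfies all constraints of~(\ref{rrlpc}), exactly as noted for~(\ref{rrlpd}). Boundedness: evaluating the Liouville equality at $v\equiv 1$ annihilates the gradient term and gives $\beta\,\mu(X\times U)=\mu_0(X)$, while evaluating the domination equality at $w\equiv 1$ gives $\mu_0(X)+\hat\mu_0(X)=\lambda(X)<\infty$; since all three measures are nonnegative, their total masses are uniformly bounded by $\lambda(X)$ and $\lambda(X)/\beta$, so the feasible set is norm-bounded. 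Because $X$ and $X\times U$ are compact metric spaces, the relevant spaces of continuous functions are separable, hence norm-bounded sets of measures are weak-$*$ metrizable and, by Banach--Alaoglu, weak-$*$ precompact; the feasible set is also weak-$*$ closed because every constraint functional ($\mu_0\mapsto\int v\,d\mu_0$, $\mu\mapsto\int v(x)\,d\mu(x,u)$, $\mu\mapsto\int\mathrm{grad}\,v\cdot f\,d\mu$, and the analogous ones with $w$) is weak-$*$ continuous. Thus the feasible set is weak-$*$ compact, so $p^*$ is finite and attained.

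With nonemptiness and weak-$*$ compactness of the feasible set in hand, $p^*=d^*$ follows from the infinite-dimensional LP duality theorem of \cite{anderson}, exactly as in the discrete-time case and in \cite{roa}. The one point deserving a remark is that the test functions for the dynamics constraint live in $C^1(X)$ rather than in the full predual $C(X)$; but since $v\mapsto\mathrm{grad}\,v\cdot f$ maps $C^1(X)$ continuously into $C(X\times U)$ and the dual variable $v$ enters only through this map and through the identity embedding $v\mapsto v$, the duality pairing is well defined and the abstract theory applies without modification.

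The step I expect to be the genuine crux — as opposed to the routine verifications above — is establishing the \emph{absence of a duality gap}, i.e.\ that the constraint operator $\mathcal A$ has weak-$*$ closed range, equivalently that the perturbed optimal-value function $z\mapsto\sup\{\mu_0(X):\mathcal A\xi=b+z,\ \xi\ge 0\}$ is upper semicontinuous at $z=0$. This is precisely where boundedness of the feasible set does the work: it prevents near-feasible, near-optimal sequences from escaping to infinity, so one can extract a weak-$*$ convergent subnet whose limit is feasible and realizes the value, ruling out a gap. I would discharge this either via the explicit perturbation-function argument or by citing the corresponding closedness/no-gap statement in \cite{anderson}; no obstacle beyond the one already handled in discrete time arises.
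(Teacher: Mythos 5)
Your proposal is correct and follows essentially the same route as the paper: it establishes nonemptiness via the trivially feasible triple $(0,0,\lambda)$, derives boundedness of the masses by testing the two equality constraints with $v\equiv 1$ and $w\equiv 1$ (giving $\beta\,\mu(X\times U)=\mu_0(X)$ and $\mu_0(X)+\hat\mu_0(X)=\lambda(X)$), and then invokes the standard weak-$*$ compactness and no-gap machinery of infinite-dimensional LP duality exactly as the paper does by reference to Theorem~2 of \cite{roa} and to \cite{anderson}. The additional details you supply (metrizability, weak-$*$ closedness, and the remark on $C^1$ test functions) are consistent elaborations of the same argument rather than a different approach.
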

\begin{proof}
Follows by the same arguments as Theorem~2 in~\cite{roa} using standard infinite-dimensional LP duality theory (see, e.g., \cite{anderson}) and the fact that the feasible set of the primal LP is nonempty and bounded in the metric inducing the weak-* topology on $M(X)\times M(X\times U)\times M(X)$. To see non-emptiness, notice that the vector of measures $(\mu_0,\mu, \hat{\mu}_0) = (0,0,\lambda)$ is trivially feasible. To see the boundedness, it suffices to evaluate the equality constraints of~(\ref{rrlpc}) for $v(x) = w(x) = 1$. This gives $\mu_0(X)  + \hat{\mu}_0(X) = \lambda(X) < \infty$ and $\mu(X) = \mu_0(X)/\beta$, which, since $\beta > 0$ and all measures are nonnegative, proves the assertion.
\end{proof}

\section{LMI relaxations}\label{sec:LMIrelax}
In this section we present finite dimensional relaxations of the infinite-dimensional LPs. Both in continuous and discrete time, the relaxations of the primal LPs lead to a truncated moment problem which translates to a semidefinite program (SDP) that can be solved by freely available software,
e.g., SeDuMi~\cite{sedumi} or SDPA \cite{sdpa}. Dual to the primal SDP relaxation is a sum-of-squares (SOS) problem that again translates to an SDP problem.  The following discussion closely follows the one in~\cite{roa_inner}.

We only highlight the main ideas behind the derivation of the finite-dimensional relaxations. The reader is referred to~\cite[Section~5]{roa} or to the comprehensive reference~\cite{lasserre} for details. First, since the supports of all measures feasible in~(\ref{rrlpd}) and~(\ref{rrlpc}) are compact, these measures are uniquely determined by their moments, i.e., by integrals of all monomials (which is a sequence of real numbers when indexed in, e.g., the canonical monomial basis). Therefore, it suffices to restrict the test functions $w(x)$ and $v(x)$ in (\ref{rrlpd}) and (\ref{rrlpc}) to all monomials, reducing the linear equality constraints on measures $\mu_0$,  $\mu$ and $\hat{\mu}_0$ of~(\ref{rrlpd}) and (\ref{rrlpc}) to linear equality constraints on their moments. Next, by the Putinar Positivstellensatz (see~\cite{lasserre,putinar}), the constraint that the support of a measure is included in a given compact basic semialgebraic set is equivalent to the feasibility of an infinite sequence of LMIs involving the so-called moment and localizing matrices, which are linear in the coefficients of the moment sequence. By truncating the moment sequence and taking only the moments corresponding to monomials of total degree less than or equal to $2k$, where $k \in\{1,2,\ldots\}$ is the relaxation order, we obtain a necessary condition for this truncated moment sequence to be the first part of a moment sequence corresponding to a measure with the desired support.
 
 In what follows, $\mathbb{R}_k[\cdot]$ denotes the vector space of real multivariate polynomials of total degree less than or equal to $k$. Furthermore, throughout the rest of this section we make the following standard standing assumption:
 \begin{assumption}\label{compact}
One of the polynomials modeling the sets $X$ resp. $U$
is equal to ${{g_X}_i}(x) = R_X^2-\|x\|^2_2$ resp. ${{g_U}_i}(u) = R_U^2-\|u\|^2_2$ with $R_X$, $R_U$ sufficiently large constants.
\end{assumption}

This assumption is completely without loss of generality since redundant ball constraints can be always added to the description of the \emph{compact} sets $X$ and $U$.

\subsection{Discrete time}

The primal relaxation of order $k$ in discrete time reads
\begin{equation}\label{plmid}
\begin{array}{rclllr}
p^*_k & =  &\max & (y_0)_0 \\
&& \mathrm{s.t.} & A_k(y,y_0,\hat{y}_0) = b_k \\
&&& M_k(y) \succeq 0, & M_{k-{d_X}_i}({g_X}_i,y) \succeq 0, \quad &i=1,2,\ldots,n_X \\
&&& &M_{k-{d_U}_i}({g_U}_{i},y) \succeq 0, \quad &i=1,2,\ldots,n_U \\
&&& M_k(y_0) \succeq 0, & M_{k-{d_X}_i}({g_X}_i,y_0) \succeq 0, \quad &i=1,2,\ldots,n_X \\
&&& M_k(\hat{y}_0) \succeq 0,  & M_{k-{d_X}_i}({g_X}_i,\hat{y}_0) \succeq 0, \quad &i=1,2,\ldots,n_X, 
\end{array}
\end{equation}
where the notation $\succeq 0$ stands for positive semidefinite
and the minimum is over moment sequences $(y, y_0, \hat{y}_0)$ truncated to degree $2k$ corresponding to measures $\mu$, $\mu_0$ and $\hat{\mu}_0$ in~(\ref{rrlpd}). The linear equality constraint captures the two linear equality constraints of~(\ref{rrlpd}) with $v(t,x)\in\mathbb{R}_{2k}[t,x]$ and $w(x)\in\mathbb{R}_{2k}[x]$ being monomials of total degree less than or equal to $2k$. The matrices $M_k(\cdot)$ are the moment and localizing matrices, following the notations of \cite{lasserre} or \cite{roa}.
In problem~(\ref{plmid}), a linear objective is minimized subject to linear equality constraints and LMI constraints; therefore problem (\ref{plmid}) is a semidefinite program (SDP).

The dual relaxation of order $k$ in discrete time reads
\begin{equation}\label{dlmid}
\begin{array}{rcll}
d^*_k & = & \inf & w'l \\\vspace{0.5mm}
&& \mathrm{s.t.} & v(x) - \alpha v(f(x,u)) = q_0(x,u) + \sum_{i=1}^{n_X} q_i(x,u) {g_X}_i(x) + \sum_{i=1}^{n_U} r_i(x,u) {g_U}_i(u) \\ \vspace{1mm}
&&& w(x)-v(x)-1 = p_0(x) + \sum_{i=1}^{n_X} p_i(x) {g_X}_i(x) \\
&&& w(x) = s_0(x) + \sum_{i=1}^{n_X} s_i(x) {g_X}_i(x),
\end{array}
\end{equation}

{\vspace{1mm}}

where $l$ is the vector of Lebesgue moments over $X$ indexed in the same basis in which the polynomial $w(x)$ with coefficients $w$ is expressed. The minimum is over polynomials $v(x) \in {\mathbb R}_{2k}[x]$ and $w\in \mathbb{R}_{2k}[x]$,
and polynomial sum-of-squares $q_i$, $p_i$, $s_i$, $i = 1,\ldots,n_X$ and $r_i$, $i=1,\ldots,n_U$, of appropriate degrees.
In problem~(\ref{dlmid}), a linear objective function is minimized subject to sum-of-squares (SOS) constraints; therefore problem~(\ref{dlmid}) is an SOS problem which can be readily cast as an SDP (see, e.g., \cite{lasserre}).

\subsection{Continuous time}
The primal relaxation of order $k$ in continuous time reads
\begin{equation}\label{plmic}
\begin{array}{rclllr}
p^*_k & =  &\max & (y_0)_0 \\
&& \mathrm{s.t.} & A_k(y,y_0,\hat{y}_0) = b_k \\
&&& M_k(y) \succeq 0, & M_{k-{d_X}_i}({g_X}_i,y) \succeq 0, \quad &i=1,2,\ldots,n_X \\
&&& &M_{k-{d_U}_i}({g_U}_{i},y) \succeq 0, \quad &i=1,2,\ldots,n_U \\
&&& M_k(y_0) \succeq 0, & M_{k-{d_X}_i}({g_X}_i,y_0) \succeq 0, \quad &i=1,2,\ldots,n_X \\
&&& M_k(\hat{y}_0) \succeq 0,  & M_{k-{d_X}_i}({g_X}_i,\hat{y}_0) \succeq 0, \quad &i=1,2,\ldots,n_X,
\end{array}
\end{equation}
where the notation $\succeq 0$ stands for positive semidefinite
and the minimum is over moment sequences $(y, y_0, \hat{y}_0)$ truncated to degree $2k$ corresponding to measures $\mu$, $\mu_0$ and $\hat{\mu}_0$ in~(\ref{rrlpc}). The linear equality constraint captures the two linear equality constraints of~(\ref{rrlpc}) with $v(t,x)\in\mathbb{R}_{2k}[t,x]$ and $w(x)\in\mathbb{R}_{2k}[x]$ being monomials of total degree less than or equal to $2k$. The matrices $M_k(\cdot)$ are the moment and localizing matrices, following the notations of \cite{lasserre} or \cite{roa}.
In problem~(\ref{plmic}), a linear objective is minimized subject to linear equality constraints and LMI constraints; therefore problem (\ref{plmic}) is a semidefinite program (SDP).

The dual relaxation of order $k$ in continuous time reads
\begin{equation}\label{dlmic}
\begin{array}{rcll}
d^*_k & = & \inf & w'l \\\vspace{0.5mm}
&& \mathrm{s.t.} & \beta v(x) - \mathrm{grad}\,v\!\cdot\! f(x,u) = q_0(x,u)\!+\!\sum_{i=1}^{n_X} q_i(x,u) {g_X}_i(x)\!+\! \sum_{i=1}^{n_U} r_i(x,u) {g_U}_i(u) \\ \vspace{1mm}
&&& w(x)-v(x)-1 = p_0(x) + \sum_{i=1}^{n_X} p_i(x) {g_X}_i(x) \\
&&& w(x) = s_0(x) + \sum_{i=1}^{n_X} s_i(x) {g_X}_i(x),
\end{array}
\end{equation}

{\vspace{1mm}}

where $l$ is the vector of Lebesgue moments over $X$ indexed in the same basis in which the polynomial $w(x)$ with coefficients $w$ is expressed. The minimum is over polynomials $v(x) \in {\mathbb R}_{2k}[x]$ and $w\in \mathbb{R}_{2k}[x]$,
and polynomial sum-of-squares $q_i$, $p_i$, $s_i$, $i = 1,\ldots,n_X$ and $r_i$, $i=1,\ldots,n_U$, of appropriate degrees.
In problem~(\ref{dlmic}), a linear objective function is minimized subject to sum-of-squares (SOS) constraints; therefore problem~(\ref{dlmic}) is an SOS problem which can be readily cast as an SDP (see, e.g., \cite{lasserre}).

\subsection{Convergence results}
In this section we state several convergence results for the finite dimensional relaxations resp. approximations (\ref{plmid}), (\ref{plmic}) resp. (\ref{dlmid}), (\ref{dlmic}). Let $w_k$ and $v_k$ denote an optimal solution to the $k^{\mathrm{th}}$ dual SDP approximation~(\ref{dlmid}) or (\ref{dlmic}), and define
\[
{X_{I}}_k := \{x\in X \: :\: v_k(x) \ge 0 \}.
\]
Then, in view of Lemmata~\ref{lem:v0d} and \ref{lem:v0c}, we know that $w_k$ over-approximates the indicator function of the MCI set $X_I$ on $X$, i.e., $w_k \ge I_{X_I}$ on $X$, and that the sets ${X_I}_k$ approximate from the outside the MCI set $X_I$, i.e., ${X_I}_k \supset X_I$. In the sequel we prove the following:
\begin{itemize}
	\item The optimal values of the finite-dimensional primal and dual problems $p_k^*$ and $d_k^*$ coincide and converge to the optimal values of the infinite dimensional primal and dual LPs $p^*$ and $ d^*$ which also coincide (in view of Theorems~\ref{noGapInfd} and \ref{noGapInfc}) and are equal to the volume of the MCI set.
	\item The sequence of functions $w_k$ converges on $X$ from above to the indicator function of the MCI set in $L_1$ norm. In addition, the running minimum $\min_{i\le k} w_i$ converges on $X$ from above to the indicator function of the MCI set set in $L_1$ norm and almost uniformly.
	\item The sequence of sets ${X_I}_k$ converges to the MCI set $X_I$ in the sense that the volume discrepancy tends to zero, i.e., $\lim_{k\to \infty} \lambda(X_{Ik} \setminus X_I) = 0$.
\end{itemize}

The proofs of the results follow very similar reasoning as analogous results on region of attraction approximations in~\cite[Section~6]{roa}.

\begin{lemma}\label{lem:noGapRelax}
There is no duality gap between primal LMI problems (\ref{plmid} and \ref{plmic}) and dual LMI problems (\ref{dlmid} and \ref{dlmic}),
i.e. $p^*_k = d^*_k$.
\end{lemma}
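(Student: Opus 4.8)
## Proof plan for Lemma \ref{lem:noGapRelax} (no duality gap between the primal and dual LMI relaxations)

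The plan is to establish the absence of a duality gap by the standard route in semidefinite programming: exhibit a strictly feasible point (Slater point) for one of the two relaxations, which forces strong duality and hence $p_k^* = d_k^*$. The pair (\ref{plmid})--(\ref{dlmid}) (and likewise (\ref{plmic})--(\ref{dlmic})) are genuine primal-dual SDP pairs obtained by the moment/SOS construction of \cite{lasserre}, so it suffices to verify that the moment-side relaxation admits a strictly feasible point, i.e. a truncated moment sequence $(y,y_0,\hat y_0)$ satisfying the linear equalities $A_k(y,y_0,\hat y_0)=b_k$ for which all moment and localizing matrices $M_k(\cdot)$, $M_{k-d_{X_i}}({g_X}_i,\cdot)$, $M_{k-d_{U_i}}({g_U}_i,\cdot)$ are \emph{positive definite}. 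Under Assumption~\ref{compact} the sets $X$ and $U$ have nonempty interior, so such strictly positive-definite moment data exist in abundance.

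First I would construct the candidate strictly feasible triple. Take $\mu_0$ to be (a small multiple of) the Lebesgue measure on $X$, say $\mu_0 = \gamma\lambda|_X$ for a small $\gamma\in(0,1)$, and $\hat\mu_0 = (1-\gamma)\lambda|_X$, so that the second equality constraint $\mu_0+\hat\mu_0=\lambda$ holds exactly at the level of all moments of degree $\le 2k$. For $\mu$ on $X\times U$ take the product of Lebesgue measure on $X$ with any measure on $U$ having positive density on the interior of $U$, scaled so that the first equality constraint — which after testing against monomials $v$ of degree $\le 2k$ becomes the linear relation (\ref{eq:LiouvDisc}) resp.\ (\ref{eq:LiouvCont}) truncated to degree $2k$ — is satisfied; in discrete time the relation reads $\int v\,d\mu - \alpha\int v(f)\,d\mu = \int v\,d\mu_0$, and one checks that choosing $\mu$ appropriately (for instance, superposing occupation-type measures, or more simply verifying that the constrained linear system in the free moments of $\mu$ is consistent and its solution set meets the interior of the moment cone) yields an admissible $\mu$ whose truncated moment sequence has positive-definite moment and localizing matrices. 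Since Lebesgue measure on a full-dimensional set and a measure with strictly positive density on $U$ are \emph{not} supported on any algebraic variety, all the associated moment matrices of any fixed finite order are strictly positive definite, which is exactly Slater's condition for the SDP (\ref{plmid}) resp.\ (\ref{plmic}).

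Second, having a Slater point for the moment SDP, I would invoke the standard strong-duality theorem for semidefinite programming (e.g.\ \cite{lasserre}, or any SDP reference): if one of the two SDPs in a primal-dual pair is strictly feasible and bounded, then there is no duality gap and the optimal value of the other is attained. Boundedness of $p_k^*$ is immediate from the same computation used in the proof of Theorem~\ref{noGapInfd}: evaluating the equality constraint $A_k(y,y_0,\hat y_0)=b_k$ against the monomial $w=1$ gives $(y_0)_0 + (\hat y_0)_0 = \lambda(X)<\infty$, and positivity of the moment matrices forces $(y_0)_0\ge 0$, so $p_k^*\le\lambda(X)$. Combining, $p_k^*=d_k^*$, and the argument is verbatim the same in discrete and continuous time, the only change being whether the truncated Liouville-type equality comes from (\ref{eq:LiouvDisc}) or (\ref{eq:LiouvCont}).

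The main obstacle is the verification that the equality constraints $A_k(y,y_0,\hat y_0)=b_k$ can be met \emph{simultaneously} with strict positivity of all the moment/localizing matrices — in particular, producing an admissible $\mu$ (not merely $\mu_0,\hat\mu_0$). The cleanest way around this is to note that strict positive-definiteness is an open condition on the moment data: one first exhibits \emph{any} feasible triple of measures for the truncated problem (feasibility of the infinite-dimensional LP, shown earlier, furnishes such measures, and their truncated moments are feasible for (\ref{plmid})/(\ref{plmic})), and then perturbs $\mu_0$, $\mu$, $\hat\mu_0$ by adding small multiples of measures with smooth strictly positive densities on the interiors of $X$, $X\times U$, $X$ respectively, chosen so that the perturbation lies in the kernel of the linear equality map $A_k$ (the kernel is nontrivial and, being a linear subspace, contains such density-induced perturbations because the interiors are full-dimensional). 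Any such perturbation strictly increases the smallest eigenvalue of every moment and localizing matrix while preserving the equality constraints, delivering the required Slater point. Once this is in place the rest is routine SDP duality.
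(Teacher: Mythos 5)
Your overall strategy---reduce to a standard strong-duality criterion for SDPs---is the right one, but the specific criterion you chose (Slater's condition for the moment-side relaxation) leaves a genuine gap at exactly the point you flag as ``the main obstacle,'' and the perturbation argument you offer to close it does not work as stated. The kernel of $A_k$ is a linear subspace, while the set of directions that strictly increase all moment and localizing matrices is an open cone; nothing guarantees their intersection is nonempty. Concretely, the constraint coming from $\mu_0+\hat{\mu}_0=\lambda$ forces $\delta y_0+\delta\hat{y}_0=0$, so you cannot add positive-density perturbations to both $\mu_0$ and $\hat{\mu}_0$ (this particular obstruction is survivable by starting from $(0,0,\lambda)$ and transferring mass from $\hat{\mu}_0$ to $\mu_0$), but the truncated Liouville constraint then demands a $\delta y$ with $\int v\,d(\delta\mu)-\alpha\int v(f)\,d(\delta\mu)=\int v\,d(\delta\mu_0)$ for all test monomials, with $\delta\mu_0$ a full-dimensional positive measure on $X$. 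At the level of genuine measures such a $\delta\mu$ exists only if almost every point of $X$ admits an invariant trajectory, which is false whenever $\lambda(X\setminus X_I)>0$; at the level of pseudo-moments the linear system may be consistent, but you give no argument that a solution exists with positive-definite moment and localizing matrices. So the existence of a primal Slater point is asserted, not proved, and it is not clear it holds in general.

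The paper avoids this issue entirely by using a different sufficient condition: it shows the primal SDP feasible set is \emph{nonempty and compact}, which by standard SDP duality theory (see Theorem~4 of~\cite{roa}) already implies $p_k^*=d_k^*$. Nonemptiness comes from the truncated moments of the trivially feasible triple $(\mu_0,\mu,\hat{\mu}_0)=(0,0,\lambda)$---no strict feasibility needed. Compactness comes from evaluating the equality constraints at $v=w=1$ to bound the masses $(y)_0$, $(y_0)_0$, $(\hat{y}_0)_0$, then using the ball constraints of Assumption~\ref{compact} to bound the even moments via the localizing matrices, and finally the positive semidefiniteness of the moment matrices (whose diagonals carry the even moments) to bound all remaining entries. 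If you want to keep your Slater-based route, you would need to either prove existence of a strictly feasible pseudo-moment triple (nontrivial, and possibly false) or switch to verifying Slater for the dual SOS problem; the boundedness argument is the cleaner and standard path here.
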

\begin{proof}
The argument closely follows the one in~\cite[Theorem~4]{roa} and therefore we only outline the key points of the proof. To prove the absence of duality gap, it is sufficient to show that the feasible sets of the primal SDPs~(\ref{plmid}) and (\ref{plmic}) are non-empty and compact. The result then follows by standard SDP duality theory (see~\cite[Theorem~4]{roa} for a detailed argument). The non-emptiness follows trivially since the vector of measures $(\mu_0,\mu, \hat{\mu}) = (0,0,\lambda)$ is feasible in the primal infinite-dimensional LPs~(\ref{rrlpd}) and (\ref{rrlpc}) and therefore the truncated moment sequences corresponding to these measures are feasible in the primal SDP relaxations~(\ref{plmid}) and (\ref{plmic}). To see the compactness observe that the first components (i.e., masses) of the truncated moment vectors $y_0$, $y$ and $\hat{y}$ are bounded. This follows by evaluating the equality constraints of~(\ref{rrlpd}) and (\ref{rrlpc}) for $w(x) = v(x) = 1$. Indeed, in discrete-time we get $(y)_0 = (y_0)_0 / (1-\alpha)$ and in continuous-time we get $(y)_0 = (y_0)_0 / \beta$; in addition, in both cases we have $(y_0)_0 + (\hat{y}_0)_0 = \lambda(X) < \infty$ and therefore the first components are indeed bounded (since they are trivially bounded from below, in fact nonnegative, due to the constraints on moment matrices). Boundedness of the even components of each truncated moment
vector then follows from the structure of the localizing matrices corresponding to the functions from Assumption~\ref{compact}. Boundedness of the entire truncated moment vectors then
follows since the even moments appear on the diagonal of the positive semidefinite moment matrices.
\end{proof}

The following result shows the convergence of the optimal values of the relaxations to the optimal values of the infinite-dimensional LPs.
\begin{theorem}\label{thm:pdconv}
The sequence of infima of LMI problems~(\ref{dlmid}) and (\ref{dlmic})  converges monotonically from above to the supremum of the LP problems~(\ref{vlpd}) and (\ref{vlpc}), i.e., $d^*\le d_{k+1}^* \le d_k^*$ and $\lim_{k\to\infty} d_k^* = d^*$. Similarly, the sequence of maxima of LMI problems (\ref{plmid}) and (\ref{plmic}) converges monotonically from above to the maximum of the LP problems (\ref{rrlpd}) and (\ref{rrlpc}), i.e., $p^* \le p^*_{k+1}\le p_k^*$ and $\lim_{k\to \infty} p^*_k = p^*$.
\end{theorem}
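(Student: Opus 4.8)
The plan is to establish the three claims first for the \emph{dual} values $d^*_k$, and then transfer everything to the primal values $p^*_k$ via the no-gap results already proved. Two of the dual claims are immediate and I would dispatch them first. For monotonicity $d^*_{k+1}\le d^*_k$: any $(v,w)$ feasible in the order-$k$ SOS problem~(\ref{dlmid}) resp.~(\ref{dlmic}), together with its SOS multipliers, is also feasible at order $k+1$, which merely allows certificates of higher degree, and the objective $w'l$ is unchanged. For the lower bound $d^*_k\ge d^*$: if $(v,w)$ is feasible in~(\ref{dlmid}) resp.~(\ref{dlmic}), evaluating the SOS identities at points of $X\times U$ resp.~$X$ and using ${g_X}_i\ge0$, ${g_U}_i\ge0$ there shows $(v,w)$ satisfies the pointwise inequality constraints of the infinite-dimensional dual~(\ref{vlpd}) resp.~(\ref{vlpc}) with the same objective; hence $d^*_k\ge d^*$. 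It follows that $(d^*_k)_k$ is non-increasing and bounded below by $d^*$, so it converges to some $L\ge d^*$.

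The core of the proof is to show $L\le d^*$, and this is where I would spend the effort. Fix $\varepsilon>0$ and pick $(\tilde v,\tilde w)$ feasible in~(\ref{vlpd}) resp.~(\ref{vlpc}) with $\int_X\tilde w\,d\lambda\le d^*+\varepsilon/2$. Approximate $\tilde v$ and $\tilde w$ uniformly on $X$ by polynomials --- in continuous time approximating $\tilde v$ \emph{together with its gradient}, which is possible by density of polynomials in $C^1(X)$ --- obtaining $\hat v,\hat w$ with $\|\hat v-\tilde v\|_{\infty,X}$, $\|\hat w-\tilde w\|_{\infty,X}$ (and $\|\mathrm{grad}\,\hat v-\mathrm{grad}\,\tilde v\|_{\infty,X}$ in continuous time) at most a small $\delta>0$. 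The polynomial pair need not satisfy the inequality constraints, but this is repaired by an additive constant shift $v:=\hat v+c$, $w:=\hat w+c'$: since $\alpha\in(0,1)$ (discrete) resp.~$\beta>0$ (continuous), the shift increases $v(x)-\alpha v(f(x,u))$ by $c(1-\alpha)>0$ resp.~$\beta v(x)-\mathrm{grad}\,v\cdot f(x,u)$ by $\beta c>0$, while $w\ge0$ and $w-v-1\ge0$ are restored once $c'$ exceeds $c$ by a suitable multiple of $\delta$. Choosing $c,c'=O(\delta)$ makes all three left-hand sides strictly positive on the relevant compact sets with a margin bounded away from zero, so Putinar's Positivstellensatz --- applicable thanks to the ball constraints of Assumption~\ref{compact} --- yields for each of them a representation of exactly the form required by~(\ref{dlmid}) resp.~(\ref{dlmic}) with SOS multipliers of some finite degree. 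Hence for all $k$ large enough $(v,w)$ is feasible in the order-$k$ dual relaxation, with objective $w'l=\int_X w\,d\lambda\le\int_X\tilde w\,d\lambda+(\delta+c')\lambda(X)\le d^*+\varepsilon/2+(\delta+c')\lambda(X)$, which is $\le d^*+\varepsilon$ for $\delta$ small. As $\varepsilon>0$ is arbitrary, $L=\lim_k d^*_k\le d^*$, so in fact $\lim_k d^*_k=d^*$ and $d^*\le d^*_{k+1}\le d^*_k$.

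Finally I would transfer to the primal side: by Lemma~\ref{lem:noGapRelax}, $p^*_k=d^*_k$ for every $k$, and by Theorems~\ref{noGapInfd} and~\ref{noGapInfc}, $p^*=d^*$; hence $p^*_k$ is non-increasing, $p^*\le p^*_{k+1}\le p^*_k$, and $\lim_k p^*_k=\lim_k d^*_k=d^*=p^*$ (the volume of the MCI set, by Section~\ref{sec:primalLP}). The step I expect to be the genuine obstacle is the polynomial approximation in continuous time --- the simultaneous $C^1$ approximation of $\tilde v$ --- combined with the degree bookkeeping needed to guarantee the Putinar certificates fit inside the order-$k$ relaxation for $k$ large; the constant-shift trick and the invocation of Putinar's theorem are otherwise routine, and the whole argument closely parallels that of~\cite[Section~6]{roa}. (An alternative route would dualize this step, extracting a weak-$\star$ convergent subsequence of near-optimal truncated moment vectors of the primal relaxations --- bounded by the estimates in the proof of Lemma~\ref{lem:noGapRelax} --- and identifying its limit, via Putinar's representation theorem, with a measure feasible in~(\ref{rrlpd}) resp.~(\ref{rrlpc}); this trades the $C^1$ density issue for a moment-realization argument of comparable difficulty.)
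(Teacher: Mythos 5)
Your proposal is correct, but it runs in the opposite direction from the paper. The paper proves convergence on the \emph{primal} side first: monotonicity is read off the nested feasible sets, and $p_k^*\to p^*$ is obtained from the compactness of the truncated moment feasible sets (established in the proof of Lemma~\ref{lem:noGapRelax}) by the standard Lasserre-hierarchy argument --- extract a weak-$\star$ convergent subsequence of near-optimal truncated moment vectors and identify the limit, via Putinar, with a measure feasible in~(\ref{rrlpd}) resp.~(\ref{rrlpc}); dual convergence $d_k^*\to d^*$ is then deduced from $p_k^*=d_k^*$ and $p^*=d^*$. This is precisely the ``alternative route'' you sketch in your closing parenthesis. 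You instead prove convergence on the \emph{dual} side first, by polynomial approximation of a near-optimal $(\tilde v,\tilde w)$, a constant shift exploiting $\alpha<1$ resp.\ $\beta>0$ to restore strict feasibility, and Putinar's Positivstellensatz, and then transfer to the primal. Both arguments are sound and both ultimately rest on Assumption~\ref{compact} and Putinar's theorem; the paper's route is shorter because the moment-convergence machinery can be cited wholesale, while yours is more self-contained and makes explicit where the discounting enters (the shift would fail for $\alpha=1$ or $\beta=0$). Two small points to tighten in your version: in discrete time the uniform approximation of $\tilde v$ must be carried out on a compact set containing $f(X\times U)$, not merely on $X$, since $v$ is composed with $f$ in the first constraint; and the simultaneous $C^1$ approximation in continuous time should be justified (e.g.\ by a Whitney extension followed by mollification or Bernstein-type simultaneous approximation), exactly the obstacle you correctly identify.
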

\begin{proof}
The monotonicity of the optimal values of the relaxations $p_k^*$ resp. approximations $d_k^*$ is evident form the structure of the feasible sets of the corresponding SDPs. The convergence of the primal relaxations $p_k$ to $p^*$ follows from the compactness of the feasible sets of the primal SDPs (\ref{plmid}) and (\ref{plmic}) (shown in the proof of Lemma~\ref{lem:noGapRelax}) by standard arguments on the convergence of Lasserre's LMI hierarchy (see, e.g., \cite{lasserre}). The converge of the optimal value of the dual approximations $d_k^*$ to $d^*$ then follows from Lemma~\ref{lem:noGapRelax}.
\end{proof}

The next theorem shows functional convergence from above to the indicator function of the MCI set.
\begin{theorem}\label{thm:dualConvFun}
Let $w_k \in {\mathbb R}_{2k}[x]$ denote the $w$-component of a solution to the dual LMI problems (\ref{dlmid}) or (\ref{dlmic}) and let $\bar{w}_k(x) =\min_{i\le k} w_i(x)$. Then $w_k$ converges from above to $I_{X_I}$ in $L^1$ norm and  $\bar{w}_k$ converges from above to $I_{X_I}$ in $L^1$ norm and almost uniformly.
\end{theorem}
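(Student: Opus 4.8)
The plan is to read everything off from two facts that are already available: the one-sided bound $w_k\ge I_{X_I}$ on $X$ (noted just before the statement, and a consequence of Lemmata~\ref{lem:v0d} and~\ref{lem:v0c} applied to the SDP-feasible pair $(v_k,w_k)$, which is feasible for the infinite-dimensional dual LP because its SOS constraints force the pointwise inequalities on $X$ and $X\times U$ via Putinar's theorem), and the value convergence $d^*_k\to\lambda(X_I)$, which follows from Theorem~\ref{thm:pdconv} combined with the no-gap Theorems~\ref{noGapInfd},~\ref{noGapInfc} and the already-established primal identity $p^*=\lambda(X_I)$.

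First I would establish the $L^1$ convergence of $w_k$ itself. Since $(v_k,w_k)$ is optimal, the objective value is $\int_X w_k\,d\lambda = w'l = d^*_k$; since $w_k\ge I_{X_I}\ge 0$ on $X$, the integrand $w_k-I_{X_I}$ is nonnegative, so $\|w_k-I_{X_I}\|_{L^1(X)} = \int_X(w_k-I_{X_I})\,d\lambda = d^*_k-\lambda(X_I)\to 0$. This is precisely convergence from above in $L^1$ norm.

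Next I would treat $\bar w_k=\min_{i\le k}w_i$. From $w_i\ge I_{X_I}$ for every $i$ we get $\bar w_k\ge I_{X_I}$ on $X$, and from $\bar w_k\le w_k$ we get $\|\bar w_k-I_{X_I}\|_{L^1(X)}\le\|w_k-I_{X_I}\|_{L^1(X)}\to 0$, which gives $L^1$ convergence from above. For the almost uniform part I would exploit monotonicity: $(\bar w_k)_k$ is non-increasing in $k$ and bounded below by $0$, hence converges pointwise everywhere on $X$ to some $\bar w_\infty\ge I_{X_I}$; dominated convergence (using $0\le\bar w_k\le w_1$, with $w_1$ a polynomial and hence bounded on the compact set $X$) then forces $\int_X\bar w_\infty\,d\lambda=\lim_k\int_X\bar w_k\,d\lambda=\lambda(X_I)=\int_X I_{X_I}\,d\lambda$, so $\bar w_\infty=I_{X_I}$ $\lambda$-a.e. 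Thus $\bar w_k\to I_{X_I}$ $\lambda$-a.e.\ on the finite measure space $(X,\lambda)$, and Egorov's theorem upgrades this a.e.\ convergence to almost uniform convergence.

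I do not expect a genuine analytic obstacle; the argument is essentially bookkeeping on top of Theorem~\ref{thm:pdconv}. The one point worth flagging, and the reason the running minimum $\bar w_k$ is introduced at all, is that the individual iterates $w_k$ carry neither monotonicity nor any a priori pointwise control, so one cannot feed them directly into Egorov's theorem to obtain almost uniform convergence. Passing to $\bar w_k$ restores monotonicity, and the $L^1$ limit computed above then pins the pointwise limit down to $I_{X_I}$ almost everywhere, after which Egorov's theorem finishes the job.
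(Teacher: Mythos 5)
Your proof is correct and follows essentially the same route as the paper: $L^1$ convergence of $w_k$ is obtained from the value convergence $d_k^*\to d^*=\lambda(X_I)$ (Theorem~\ref{thm:pdconv} plus the no-gap results) together with the one-sided bound $w_k\ge I_{X_I}$, and almost uniform convergence of $\bar w_k$ is then a consequence of a.e.\ convergence via Egorov on the finite measure space $(X,\lambda)$. The only (cosmetic) difference is how a.e.\ convergence of $\bar w_k$ is reached: the paper extracts an almost-uniformly convergent subsequence of $\{w_k\}$ from its $L^1$ convergence and squeezes $I_{X_I}\le\bar w_k\le w_{k_j}$, whereas you exploit the monotonicity of $\bar w_k$ to get a pointwise limit everywhere and pin it to $I_{X_I}$ a.e.\ through the integral identity; both are standard and equally valid.
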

\begin{proof}
The convergence in $L_1$ norm follows immediately from Theorem~\ref{thm:pdconv} and from the fact that $w_k \ge I_{X_I}$ by Lemmata~\ref{lem:v0d} and \ref{lem:v0c}. The convergence of the running minima follows from the fact that there exists a subsequence of $\{w_k\}_{k=0}^\infty$ which converges almost uniformly (by, e.g., \cite[Theorems~2.5.2 and ~2.5.3]{ash}).
\end{proof}

Our last theorem shows a set-wise convergence of the outer-approximations to the MCI set.
\begin{theorem}
Let $(v_k,w_k)\in {\mathbb R}_{2k}[x]\times {\mathbb R}_{2k}[x]$ denote an optimal solution to the dual LMI problem (\ref{dlmid}) or (\ref{dlmic}) and let ${X_I}_k := \{x \in {\mathbb R}^n \: :\: v_k(x) \geq 0\}$. Then $X_I \subset {X_I}_k$,
\[
 \lim_{k\to\infty}\lambda(X_{Ik}\setminus X_I) = 0 \:\:\:\text{and}\:\:\:
 \lambda(\cap_{k=1}^{\infty} {X_I}_k \setminus X_I) = 0.
\]
\end{theorem}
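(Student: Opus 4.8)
The plan is to derive the three claims from the functional convergence established in Theorem~\ref{thm:dualConvFun} together with the pointwise inequalities $v_k\ge 0$ and $w_k\ge 1$ on $X_I$ from Lemmata~\ref{lem:v0d} and~\ref{lem:v0c}. The inclusion $X_I\subset{X_I}_k$ is immediate: for $x\in X_I$ we have $v_k(x)\ge 0$, hence $x\in{X_I}_k$; this also gives the reverse inclusion $\cap_k {X_I}_k\supset X_I$, so $\lambda(\cap_k{X_I}_k\setminus X_I)\ge 0$ trivially and the content of the last equality is that this intersection is \emph{not} larger than $X_I$ up to a null set.

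For the volume convergence $\lim_{k\to\infty}\lambda({X_I}_k\setminus X_I)=0$, the key observation is the chain of inequalities relating $v_k$, $w_k$ and the indicator $I_{X_I}$. From the second constraint in~(\ref{dlmid}) resp.~(\ref{dlmic}) we have $w_k\ge v_k+1$ on $X$, and from the third constraint $w_k\ge 0$ on $X$. On the set ${X_I}_k\setminus X_I$ we have $v_k\ge 0$ by definition of ${X_I}_k$, hence $w_k\ge v_k+1\ge 1$ there, while $I_{X_I}=0$ there; therefore $w_k - I_{X_I}\ge 1$ on ${X_I}_k\setminus X_I$. Consequently
\[
\lambda({X_I}_k\setminus X_I)\;\le\;\int_{{X_I}_k\setminus X_I}\big(w_k - I_{X_I}\big)\,d\lambda\;\le\;\int_X \big(w_k - I_{X_I}\big)\,d\lambda\;=\;\|w_k - I_{X_I}\|_{L^1(X)},
\]
where the middle inequality uses $w_k\ge 0\ge$ (anything we subtract is $I_{X_I}\le 1\le w_k$ wherever $w_k\ge 1$; more carefully, $w_k - I_{X_I}\ge w_k - 1$ could be negative, so one should note $w_k - I_{X_I}\ge 0$ fails in general — instead bound $\lambda({X_I}_k\setminus X_I)\le\int_X(w_k - I_{X_I})^+\,d\lambda\le\|w_k - I_{X_I}\|_{L^1(X)}$ using that $w_k - I_{X_I}\ge 1>0$ on the region of integration). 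By Theorem~\ref{thm:dualConvFun}, $\|w_k - I_{X_I}\|_{L^1(X)}\to 0$, which yields the claim.

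For the final equality $\lambda(\cap_{k=1}^\infty{X_I}_k\setminus X_I)=0$, I would use the running minima $\bar w_k=\min_{i\le k}w_i$, which by Theorem~\ref{thm:dualConvFun} converge to $I_{X_I}$ almost uniformly, hence (passing to a subsequence if needed) $\lambda$-almost everywhere on $X$. Fix a point $x\in\cap_k{X_I}_k\setminus X_I$ at which $\bar w_k(x)\to I_{X_I}(x)=0$. Since $x\in{X_I}_i$ means $v_i(x)\ge 0$, the second dual constraint gives $w_i(x)\ge v_i(x)+1\ge 1$ for every $i$, so $\bar w_k(x)\ge 1$ for all $k$, contradicting $\bar w_k(x)\to 0$. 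Hence the set of such $x$ is contained in the $\lambda$-null set where $\bar w_k$ fails to converge to $I_{X_I}$, proving $\lambda(\cap_k{X_I}_k\setminus X_I)=0$.

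The main obstacle — really a bookkeeping point rather than a deep difficulty — is making the $L^1$ estimate in the second step airtight: one must integrate $(w_k-I_{X_I})^+$ rather than $w_k-I_{X_I}$, and observe that on the region ${X_I}_k\setminus X_I$ the integrand is at least $1$ so that the set's measure is genuinely dominated by the integral, while on the rest of $X$ the positive part is still bounded by $|w_k-I_{X_I}|$. A secondary subtlety is that almost-uniform convergence of $\bar w_k$ is only guaranteed along a subsequence (as in the proof of Theorem~\ref{thm:dualConvFun}); but since $\bar w_k$ is monotone nonincreasing in $k$, convergence of a subsequence to $I_{X_I}$ forces convergence of the whole sequence, so this causes no trouble for the final argument.
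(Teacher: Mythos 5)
Your proof is correct, and for the first two claims it follows essentially the paper's route: the inclusion $X_I\subset{X_I}_k$ comes from Lemmata~\ref{lem:v0d}/\ref{lem:v0c}, and the volume convergence from the chain $w_k\ge I_{{X_I}_k}\ge I_{X_I}$ (a consequence of the dual constraints $w\ge v+1$ and $w\ge 0$ on $X$) combined with the $L^1$ convergence of Theorem~\ref{thm:dualConvFun}. Two remarks. First, the bookkeeping you worry about in the second step is unnecessary: $w_k-I_{X_I}\ge 0$ holds on \emph{all} of $X$ (on $X_I$ because $w_k\ge 1$ by the Lemmata, off $X_I$ because $w_k\ge 0$ and $I_{X_I}=0$), so you may integrate $w_k-I_{X_I}$ directly without taking positive parts; the paper's version of the same estimate is simply $\lambda({X_I}_k)\le\int_X w_k\,d\lambda\to\lambda(X_I)$, which together with $X_I\subset{X_I}_k$ forces $\lambda({X_I}_k\setminus X_I)\to 0$. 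Second, for the final equality the paper avoids pointwise convergence altogether: since $\cap_{i=1}^{k}{X_I}_i\subset{X_I}_k$ and the Lebesgue measure is continuous from above on the finite-measure set $X$, one gets $\lambda(\cap_{k=1}^{\infty}{X_I}_k)=\lim_{k\to\infty}\lambda(\cap_{i\le k}{X_I}_i)\le\lim_{k\to\infty}\lambda({X_I}_k)=\lambda(X_I)$, and the reverse inequality is the inclusion $X_I\subset\cap_k{X_I}_k$. Your alternative via the running minima $\bar w_k$ is also valid --- almost-uniform convergence implies a.e.\ convergence, every point of $\cap_k{X_I}_k\setminus X_I$ has $\bar w_k\ge 1$ and so lies in the exceptional null set, and your monotonicity remark correctly disposes of the subsequence issue --- but it invokes heavier machinery (Theorem~\ref{thm:dualConvFun}'s almost-uniform convergence statement) where elementary measure continuity suffices.
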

\begin{proof} From Lemmata~\ref{lem:v0d} or \ref{lem:v0c} we have ${X_I}_k\supset X_I$ and $w_k \ge I_{X_I}$; therefore, since $w\ge v +1$ and $w\ge 0$ on $X$, we have $w_k\ge I_{{X_I}_k}\ge I_{X_{I}}$ and $\{x : w_k(x) \ge 1\}\supset  {X_{I}}_k\supset X_0$. From Theorem~\ref{thm:dualConvFun}, we have $w_k \to I_{X_I}$ in $L^1$ norm on $X$. Consequently, 
\begin{align*}
\lambda(X_I) = \int_X I_{X_I}\,d\lambda & = \lim_{k\to\infty} \int_X w_k\,d\lambda  \ge  \lim_{k\to\infty} \int_X I_{{X_I}_k}\,d\lambda\\&   =  \lim_{k\to\infty}\lambda({X_I}_k) \ge \lim_{k\to\infty}\lambda(\cap_{i=1}^k {X_I}_i) = \lambda(\cap_{k=1}^\infty {X_I}_k). 
\end{align*}
But since $X_I \subset {X_I}_k$ for all $k$, we must have
\[
 \lim_{k\to\infty}\lambda({X_I}_k) = \lambda(X_I)\quad \mathrm{and}\quad  \lambda(\cap_{k=1}^\infty {X_I}_k) = \lambda(X_I),
\]
and the theorem follows.
\end{proof}

\section{Numerical examples}\label{sec:NumEx}
In this section we present numerical examples that illustrate our results. The primal SDP relaxations were modeled using Gloptipoly 3~\cite{glopti} and the dual SOS problems using Yalmip~\cite{yalmip}. The resulting SDP problems were solved using SeDuMi~\cite{sedumi} (which, in the case of primal relaxations, also returns the dual solution providing the outer approximations). For numerical computation (especially for higher relaxation orders), the problem data should be scaled such that the constraint sets are (within) unit boxes or unit balls; for ease of reproduction, most of the numerical problems shown are already scaled. On our problem class we observed only marginal sensitivity to the values of the discrete- and continuous-time discount factors $\alpha$ and $\beta$ and report results with $\alpha = 0.9$ and $\beta = 1$ for all examples presented.

For a discussion on the scalability of our approach and the performance of alternative SDP solvers see the Conclusion and the acrobot-on-a-cart example below.

\subsection{Discrete time}

\subsubsection{Double integrator}\label{sec:DoubleIntegDiscrete}

Consider the discrete-time double integrator:
\begin{align*}
	x_1^+ &= x_1 + 0.1x_2   \\
	x_2^+ & = x_2 + 0.05u
\end{align*}
with the state constraint set $X = [-1,1]^2$ and input constraint set $U = [-0.5,0.5]$. The resulting of MCI set outer approximations of degree 8 and 12 are shown in Figure~\ref{fig:1d}; the approximation is fairly tight for modest degrees. The true MCI set was computed using the standard algorithm based on polyhedral projections~\cite{blanchini}.

\subsubsection{Cathala system}

Consider the Cathala system borrowed from~\cite{chatala}:
\begin{align*}
	x_1^+ &= x_1 + x_2   \\
	x_2^+ & = -0.5952 + x_2 + x_1^2.
\end{align*}
The chaotic attractor of this system is contained in the set $X = [-1.6,1.6]^2$. MCI set outer approximations are shown in Figure~\ref{fig:2d}; again, the approximations are relatively tight for small relaxation orders. The true MCI set was (approximately) computed by gridding.

\begin{figure*}[ht]
	\begin{picture}(140,180)
	\put(20,20){\includegraphics[width=70mm]{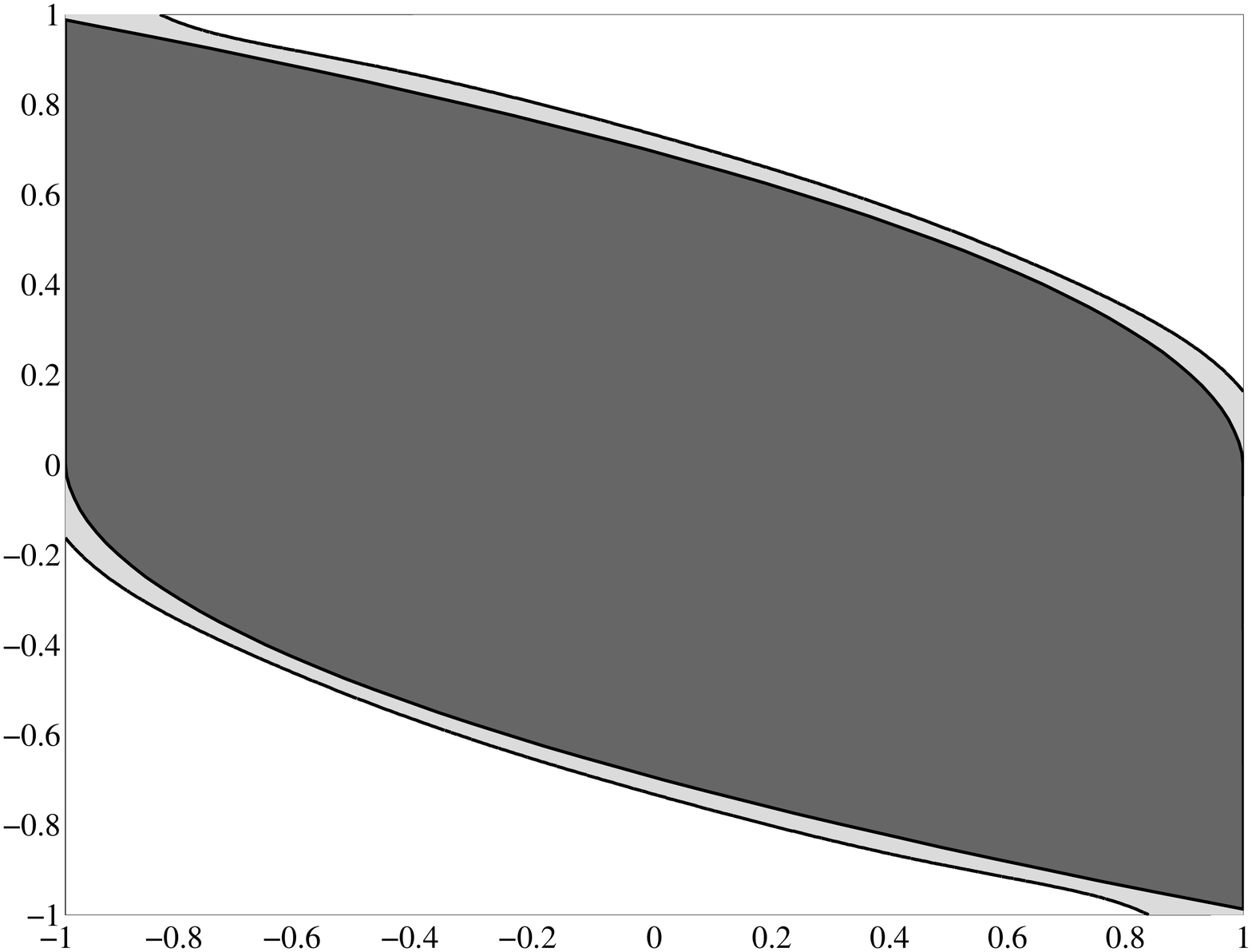}} 
	\put(250,20){\includegraphics[width=70mm]{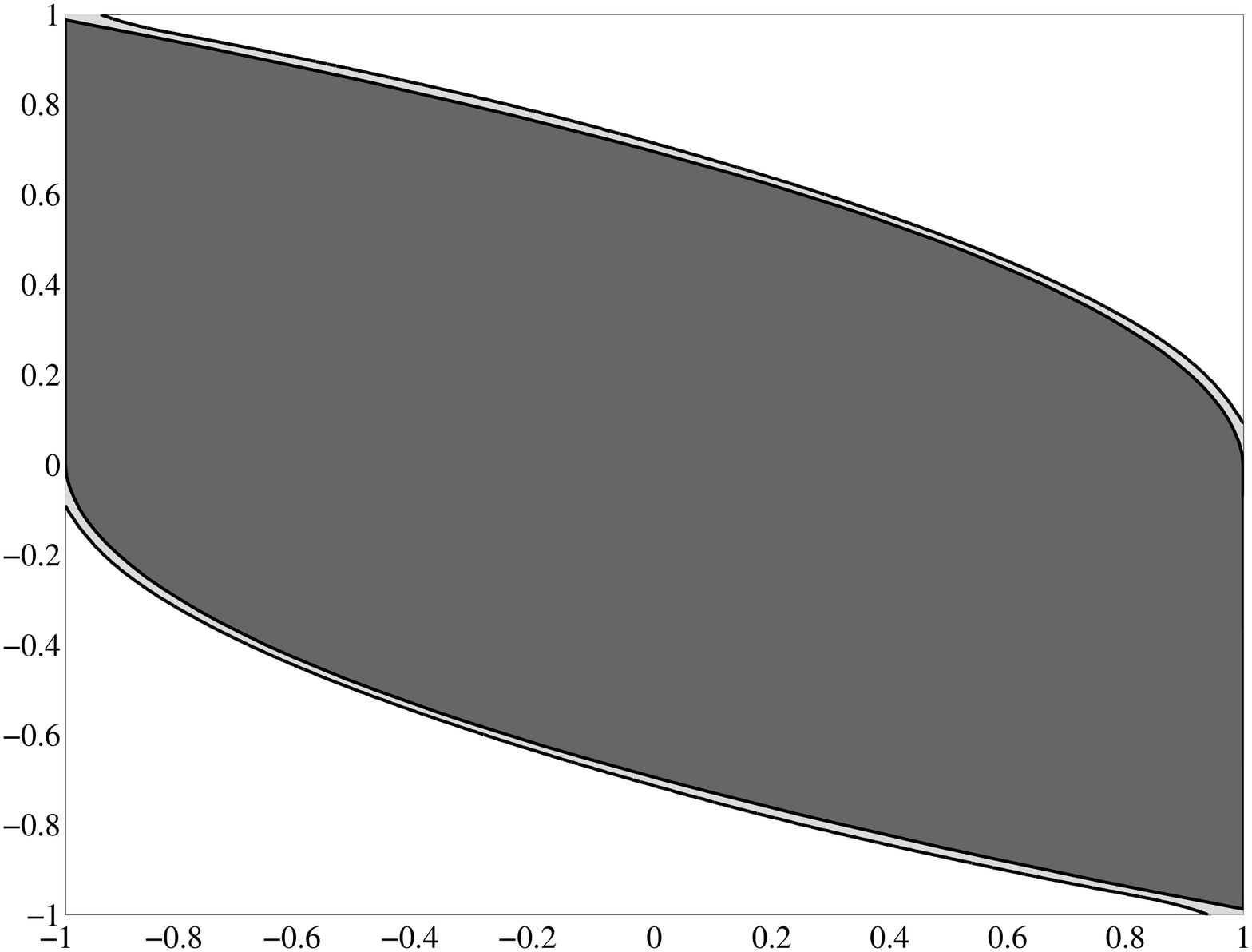}}

	\put(120,10){\small $x$}
	\put(350,10){\small $x$}

	\put(165,150){\small $d = 8$}
	\put(395,150){\small $d=12$}
	\end{picture}
	\caption{Discrete time double integrator -- polynomial outer approximations (light gray) to the MCI set (dark gray) for degrees $d\in\{8,12\}$.}
	\label{fig:1d}
\end{figure*}

\begin{figure*}[ht]
	\begin{picture}(140,180)
	\put(20,20){\includegraphics[width=70mm]{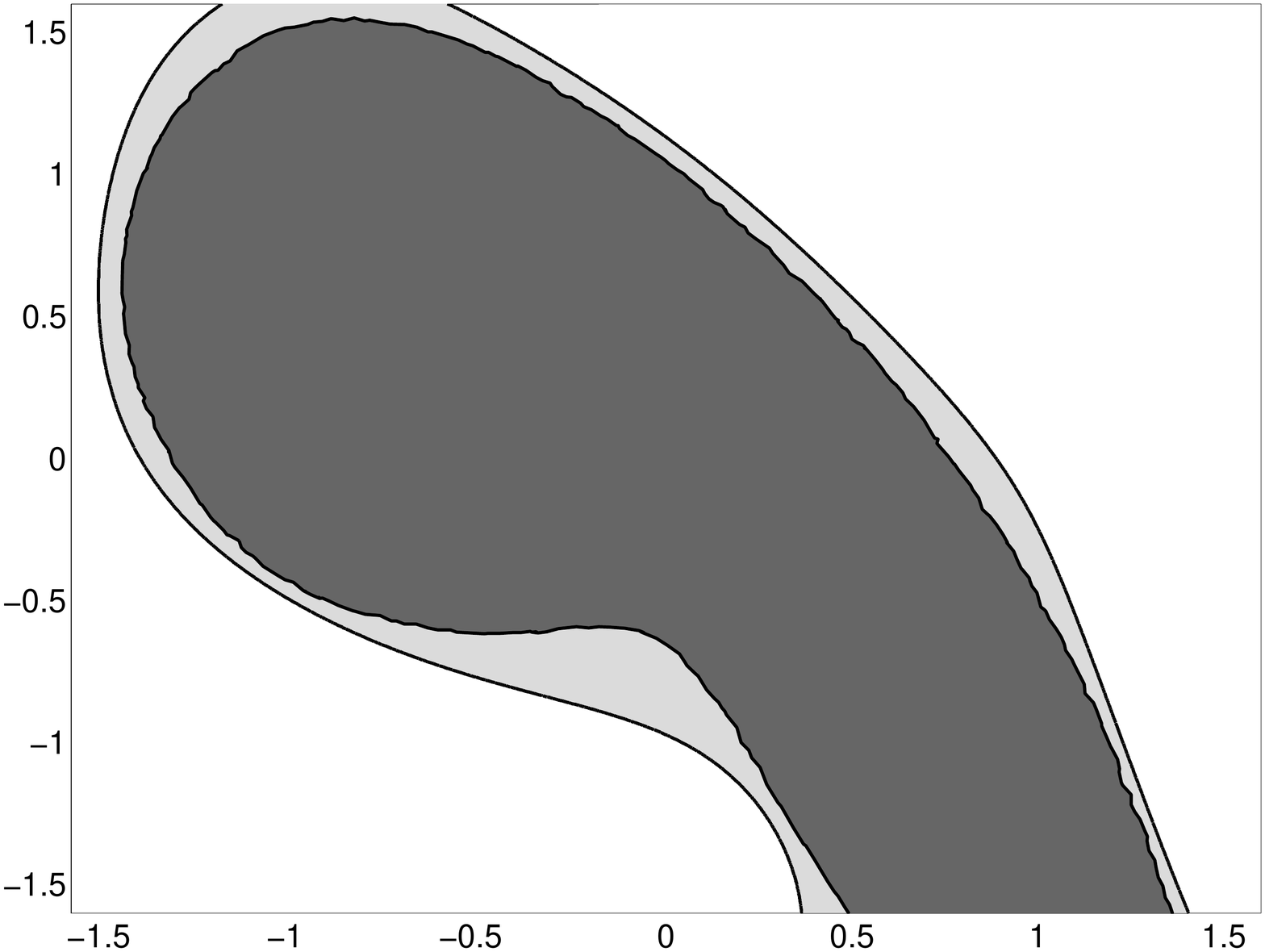}} 
	\put(250,20){\includegraphics[width=70mm]{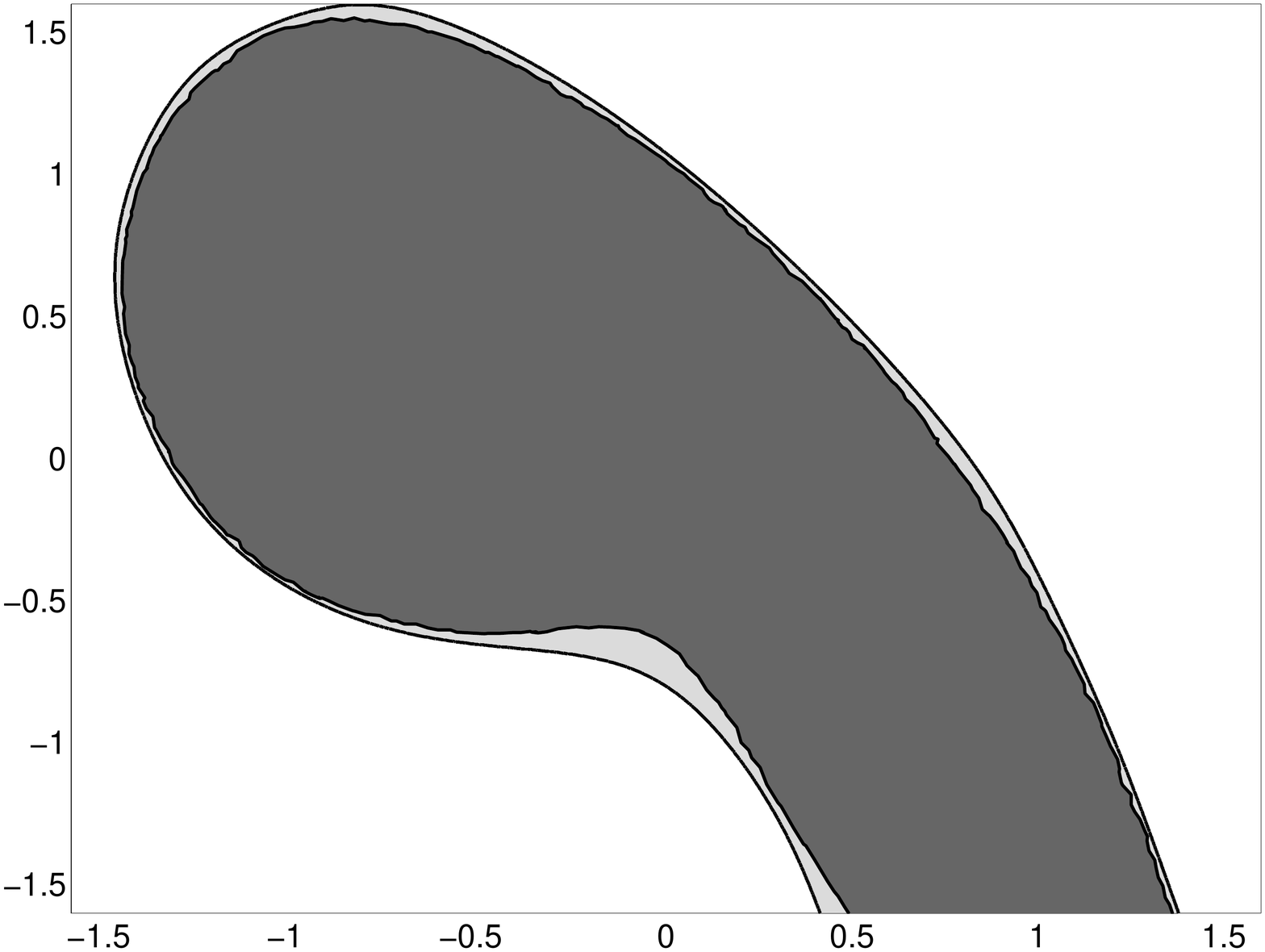}}

	\put(120,10){\small $x$}
	\put(350,10){\small $x$}

	\put(165,150){\small $d = 6$}
	\put(395,150){\small $d=10$}
	\end{picture}
	\caption{\small Cathala system -- polynomial outer approximations (light gray) to the MCI set (dark gray) for degrees $d\in\{6,10\}$.}
	\label{fig:2d}
\end{figure*}

\subsubsection{Julia sets}

Consider over $z \in \mathbb C$, or equivalently over $x \in {\mathbb R}^2$
with $z := x_1+ix_2$, the quadratic recurrence
\begin{align*}
	z^+ &= z^2+c
\end{align*}
with $c \in \mathbb C$ a given complex number and $i$ the imaginary unit.
The filled Julia set is the set of all initial conditions of the above recurrence for which the trajectories remain bounded. The shape of the Julia set depends strongly on the parameter $c$. If $c$ lies inside the Mandelbrot set, then the Julia set is connected; otherwise the set is disconnected. In both cases the boundary of the set has a very complicated (in fact fractal) structure. Here we shall compute outer approximations of the filled Julia set intersected with the unit ball. To this end we set $X = \{x\in\mathbb{R}^2\: :\: \|x\| \le 1\}$. Figure \ref{fig:j1} shows outer approximations
of degree~12 for parameter values $c=-0.7+i0.2$ (inside the Mandelbrot set)
and $c=-0.9+i0.2$ (outside the Mandelbrot set). The ``true'' filled Julia set was (approximately) obtained by randomly sampling initial conditions within the unit ball and iterating the recurrence for one hundred steps.
Taking higher degree of the approximating polynomials does not give significant improvements
due to our choice of the monomial basis to represent
polynomials. An alternative basis (e.g. Chebyshev polynomials --
see the related discussions in \cite{volume} and \cite{roa})
would allow us to improve further the outer estimates
and better capture the intricate structure of the filled Julia set's boundary. 

\begin{figure*}[t!]
	\begin{picture}(140,206)
	\put(5,20){\includegraphics[width=80mm]{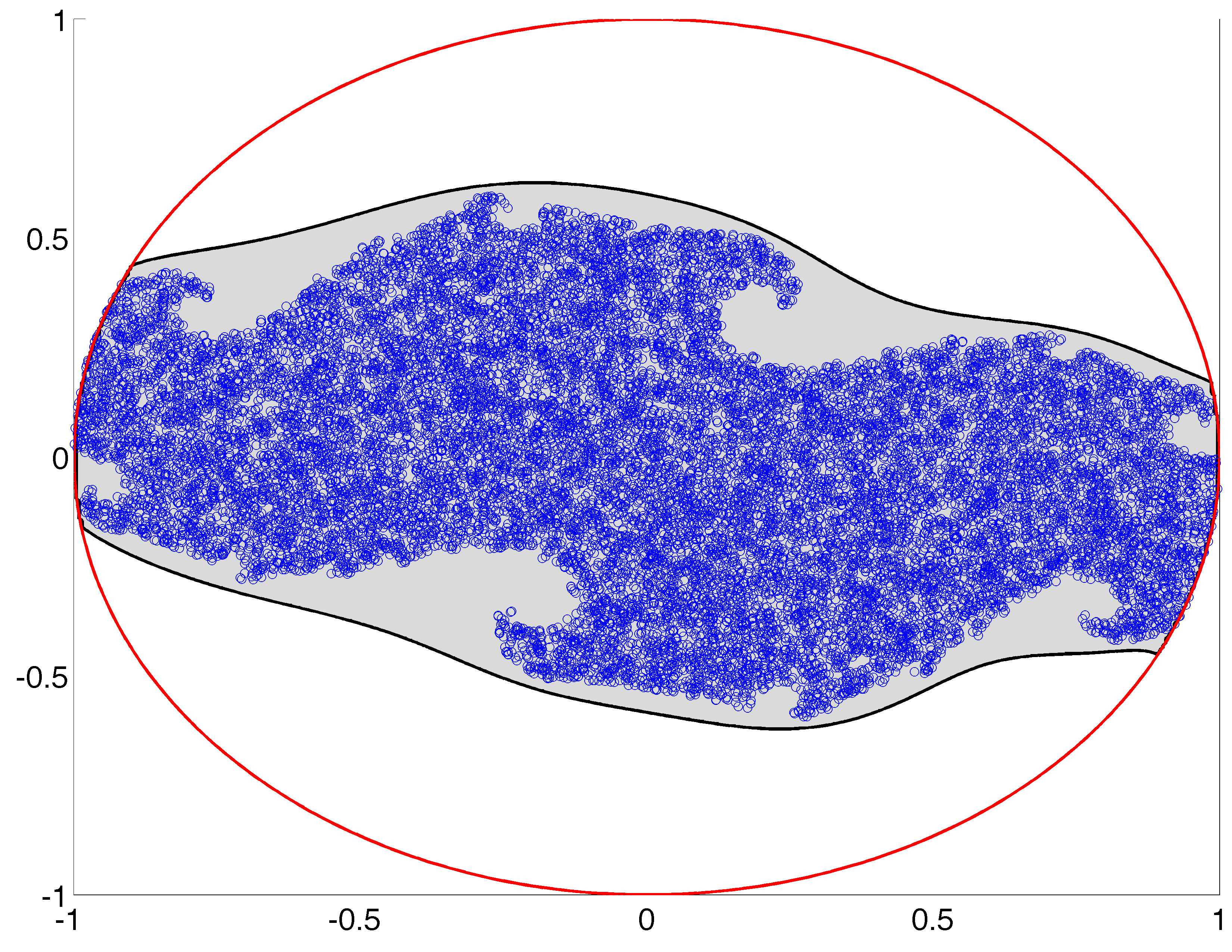}} 
	\put(235,20){\includegraphics[width=80mm]{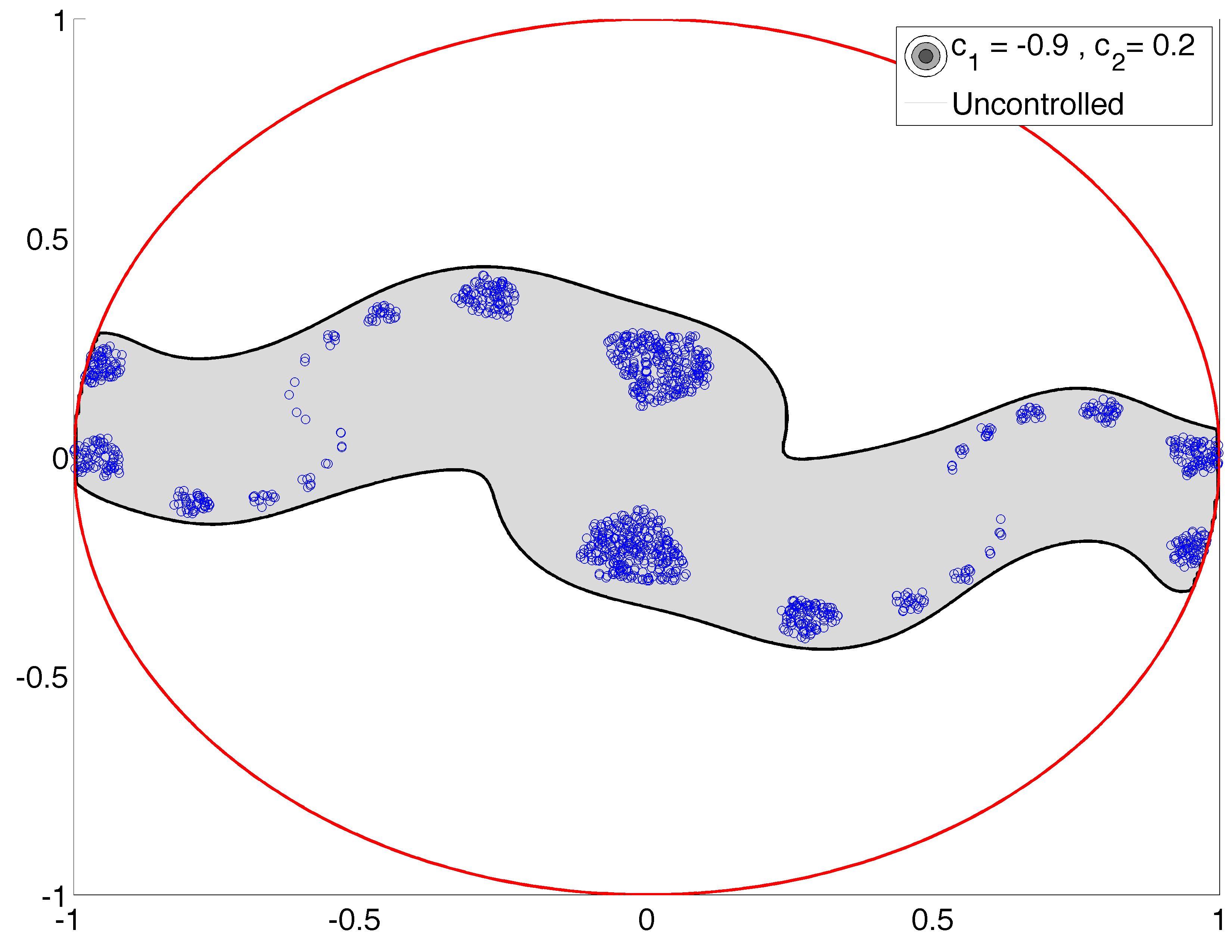}}

	\put(85,10){\small $c=-0.7+i0.2$}
	\put(315,10){\small $c=-0.9+i0.2$}

	\end{picture}
	\caption{\small Filled Julia set -- polynomial outer approximation of degree 12 (light gray) and (an approximation of) the ``true'' set (dark grey) represented as an ensemble of initial conditions randomly sampled within the state-constraint set. The dashed line shows the boundary of the unit-ball state-constraint set.}
	\label{fig:j1}
\end{figure*}

 \subsubsection{H\'enon map}
 Consider the modified controlled H\'enon map
\begin{align*}
 x_1^+ &= 0.44 - 0.1x_3 - 4x_2^2 + 0.25u,\\
 x_2^+ &= x_1-4x_1x_2,\\
 x_3^+ &= x_2,
\end{align*}
adapted from \cite{henon} with $X = [-1,1]^3$ and $U = [-u_\mathrm{max},u_\mathrm{max}]$. We investigate two cases: uncontrolled (i.e., $u_\mathrm{max} = 0$) and controlled with $u_\mathrm{max} = 1$. Figure~\ref{fig:Henon} shows outer approximations to the MCI set of degree eight for both settings and the ``true'' MCI set in the uncontrolled setting (approximately) obtained by random sampling of initial conditions inside the constraint set $X$. The outer approximations suggest that, as expected, allowing for control leads to a larger MCI set. 
 
 \begin{figure*}[ht]
	\begin{picture}(100,265)
	\put(135,20){\includegraphics[width=61mm]{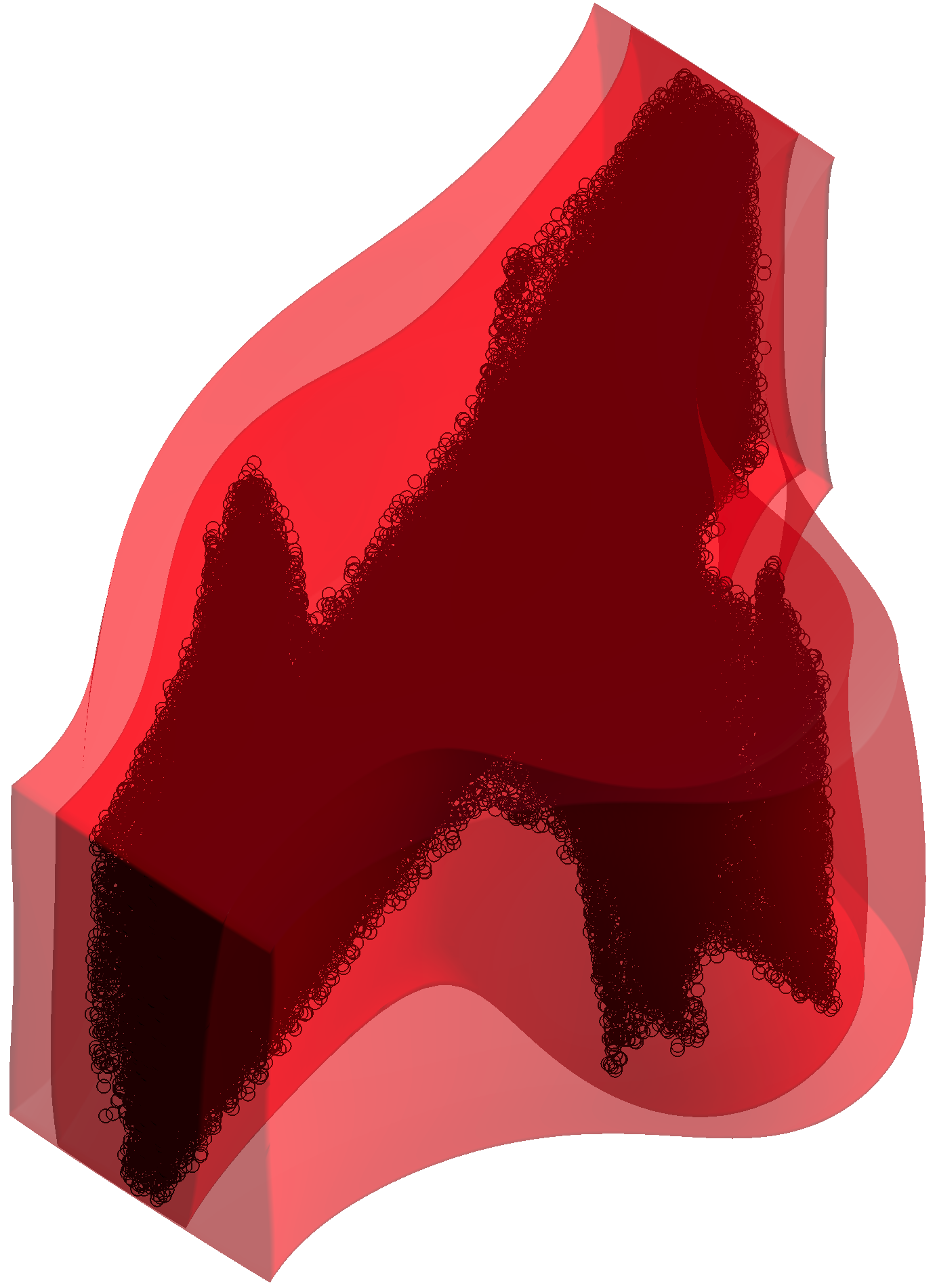}} 
	\end{picture}
	\caption{\small Controlled H\'enon map -- polynomial outer approximation of degree eight in the uncontrolled setting (darker red, smaller) and in the controlled setting (lighter red, larger). The (approximation of) the ``true'' set (black) in the uncontrolled setting is represented as an ensemble of initial conditions randomly sampled within the state-constraint set.}
	\label{fig:Henon}
\end{figure*}

\subsection{Continuous time}

\subsubsection{Double integrator}
Consider the continuous-time double integrator
\begin{align*}
	\dot{x}_1 &= x_2   \\
	\dot{x}_2 & = u,
\end{align*}
with state constraint set $X = [-1,1]^2$ and input constraint set $U = [-1,1]$. The resulting MCI set outer approximations for degrees 8 and 12 are in Figure~\ref{fig:1c}. The approximations are fairly tight even for relatively low relaxation orders. The true MCI set was (approximately) computed as in Section~\ref{sec:DoubleIntegDiscrete} by methods of~\cite{blanchini} after dense time discretization.

\begin{figure*}[ht]
	\begin{picture}(140,180)
	\put(20,20){\includegraphics[width=70mm]{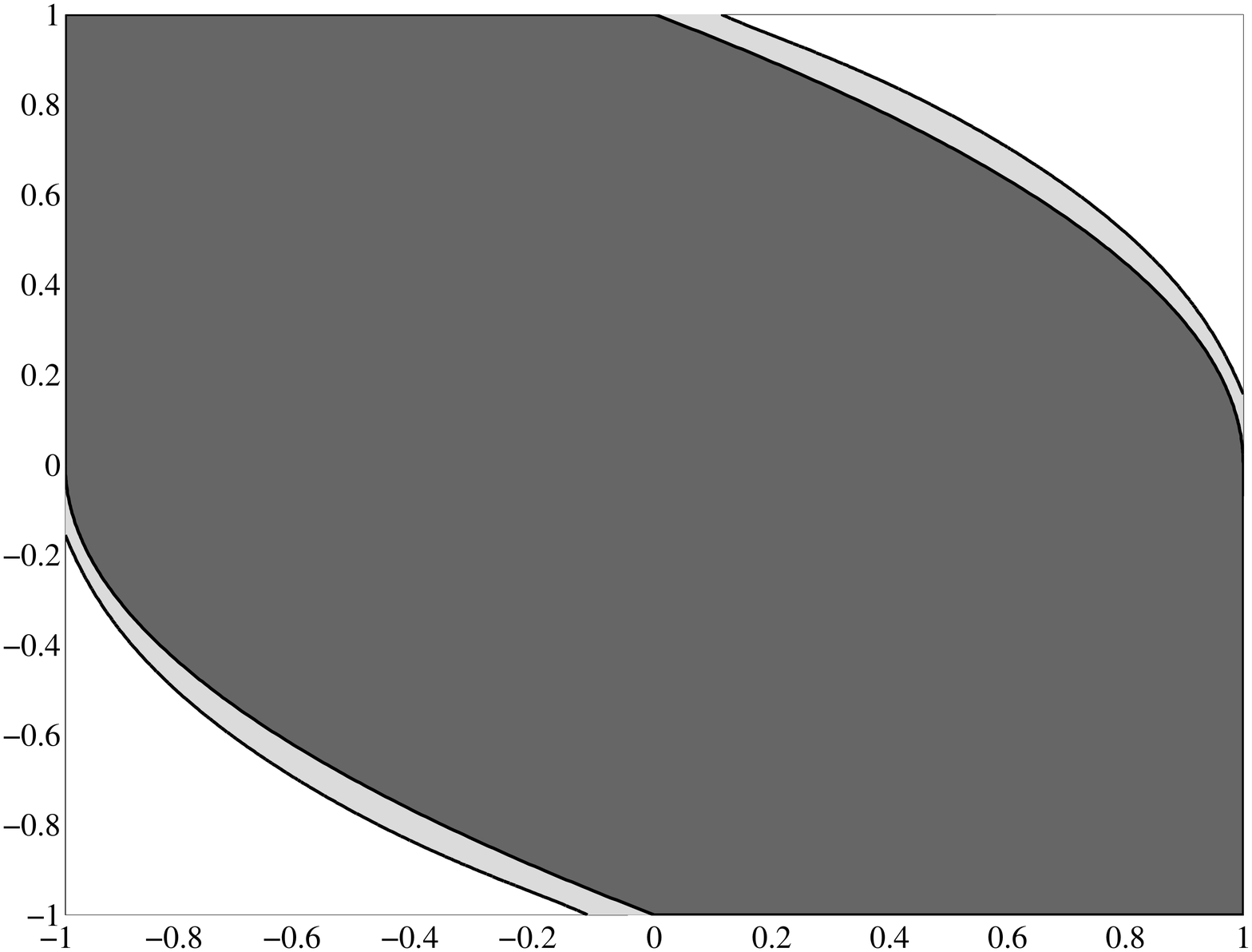}} 
	\put(250,20){\includegraphics[width=70mm]{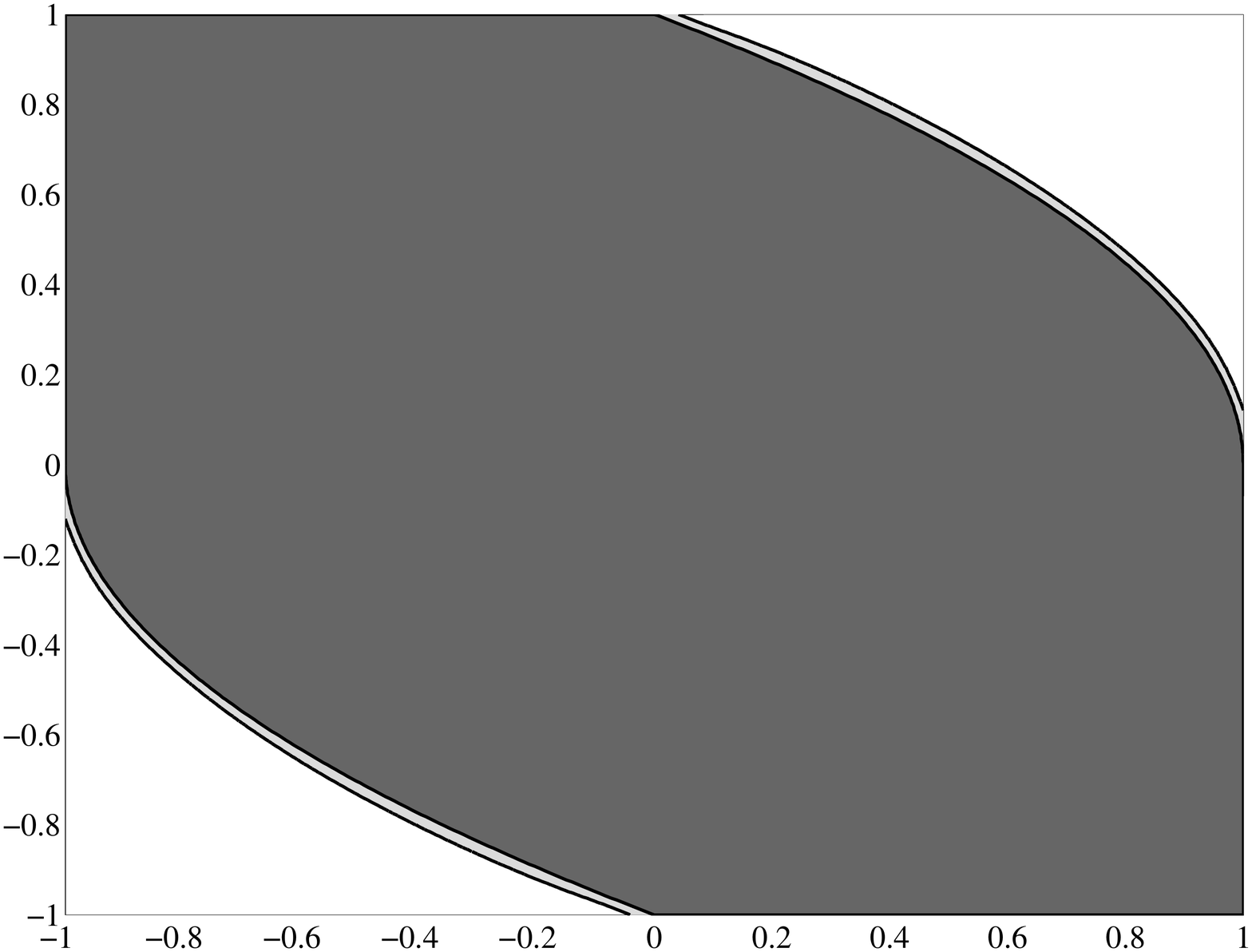}}

	\put(120,10){\small $x$}
	\put(351,10){\small $x$}

	\put(175,160){\small $d = 8$}
	\put(405,160){\small $d=14$}
	\end{picture}
	\caption{\small Continuous-time double integrator -- polynomial outer approximations (light gray) to the MCI set (dark gray) for degrees $d\in\{8,14\}$.}
	\label{fig:1c}
\end{figure*}

\subsubsection{Spider-web system}
As our second example we take the spider-web system from~\cite{ahmadi_msc} given by equations
\begin{align*}
	\dot{x}_1 &= -0.15x_1^7 + 200x_1^6x_2 - 10.5x_1^5x_2^2 - 807x_1^4x_2^3 + 14x_1^3x_2^4 + 600x_1^2x_2^5 - 3.5x_1x_2^6 + 9x_2^7   \\
	\dot{x}_2 & = -9x_1^7 - 3.5x_1^6x_2 - 600x_1^5x_2^2 + 14x_1^4x_2^3 + 807x_1^3x_2^4 - 10.5x_1^2x_2^5 - 200x_1x_2^6 - 0.15x_2^7
\end{align*}
with the constraint set $X = [-1,1]^2$. Here we exploit the fact that the system dynamics are captured by constraints on $v$ only whereas $w$ is merely over approximating $v+1$, and the fact that outer approximations to the MCI set are given not only by $\{x : v(x)\ge 0\}$ but also by $\{x : w(x)\ge 1 \}$. Therefore, if low-complexity outer approximations are desired, it is reasonable to choose different degrees of $v$ and $w$ in~(\ref{dlmic}) -- high for $v$ and lower for $w$ -- and use the set $\{x : w(x)\ge 1\}$ as the outer approximation. That way, we expect to obtain relatively tight low-order approximations. This is confirmed by numerical results shown in Figure~\ref{fig:2c}. The degree of $v$ is equal to 16 for both figures, whereas $\mathrm{deg}\,w = 8$ for the left figure and $\mathrm{deg}\,w = 16$ for the right figure. We observe no significant loss in tightness by choosing a smaller degree of $w$. The true MCI set was (approximately) computed by gridding.

\begin{figure*}[ht]
	\begin{picture}(140,180)
	\put(20,20){\includegraphics[width=70mm]{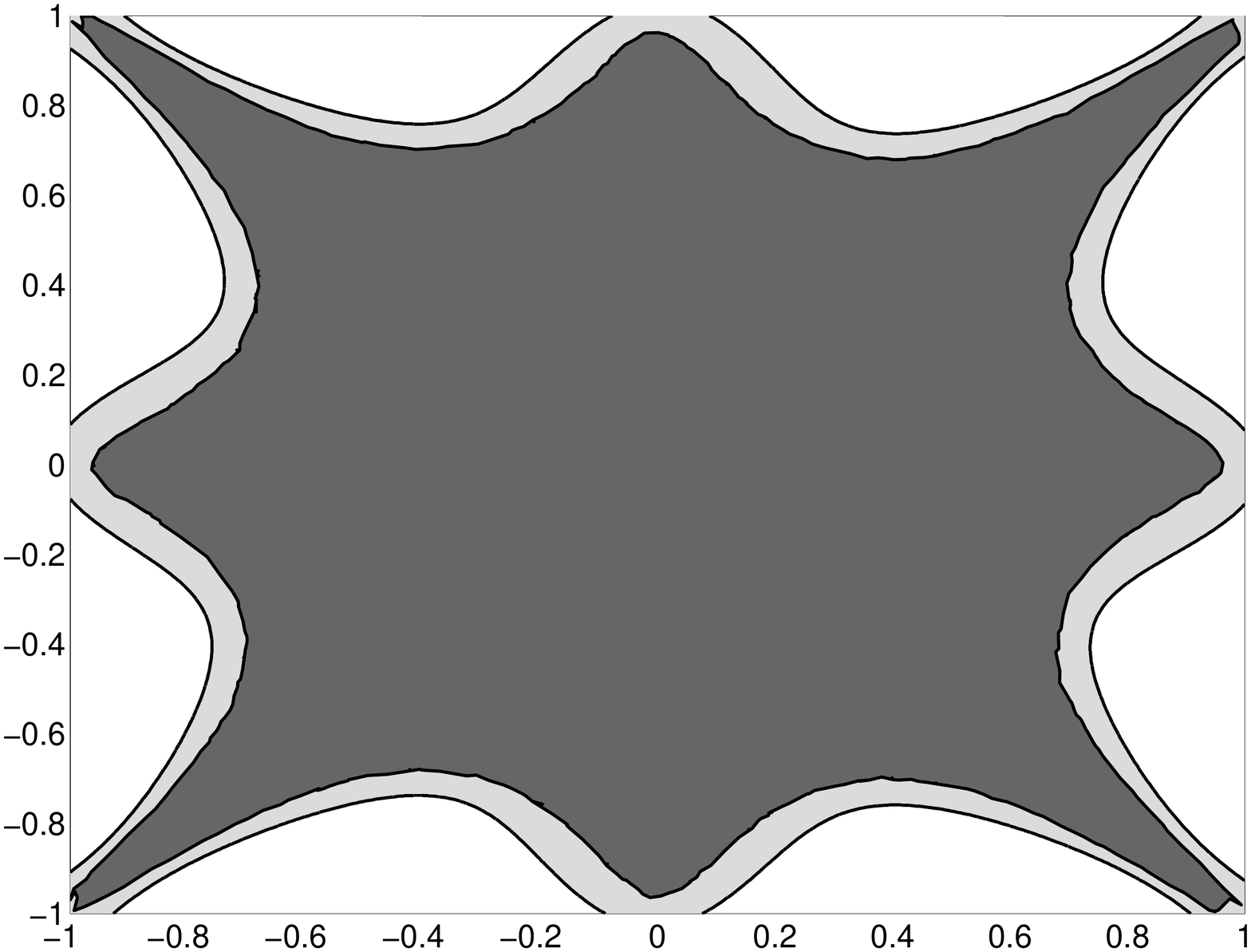}} 
	\put(250,20){\includegraphics[width=70mm]{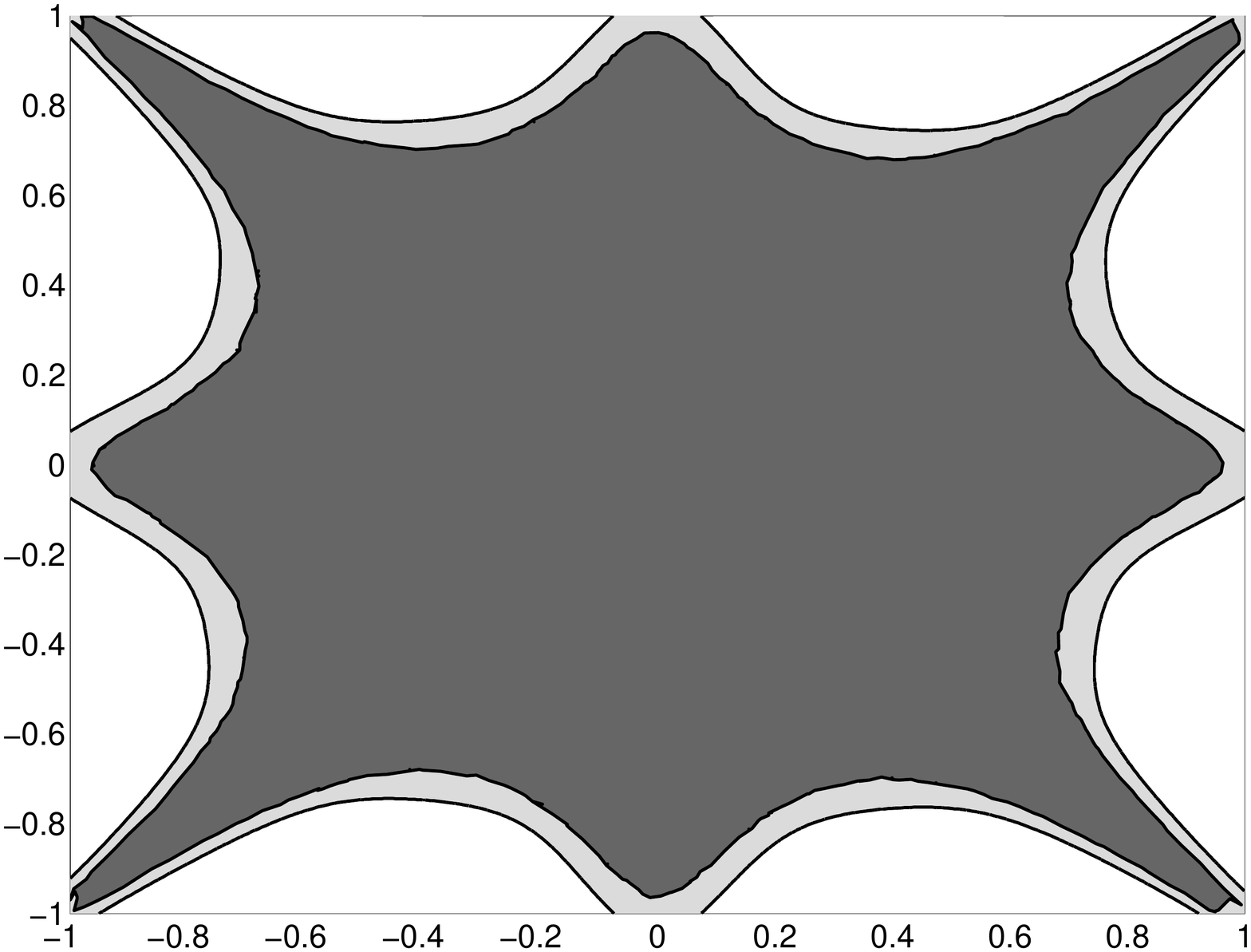}}

	\put(120,10){\small $x$}
	\put(351,10){\small $x$}

	\end{picture}
	\caption{\small Spider-web system -- polynomial outer approximations (light gray) to the MCI set (dark gray) for degrees $\mathrm{deg}\,v = 16$ and $\mathrm{deg}\,w = 8$ on the left and $\mathrm{deg}\,w = 16$ on the right.}
	\label{fig:2c}
\end{figure*}

\subsubsection{Acrobot on a cart}\label{sec:acrobotOnCart}
As our last example we consider the acrobot on a cart system adapted from~\cite{slavka}, which is essentially a double pendulum on a cart where the inputs are the force acting on the cart and the torque in the middle joint of the double pendulum. The system is sketched in Figure~\ref{fig:acrobOnCart_sketch}. It is a sixth order system with with two control inputs; the dynamic equation is given by
\[
\dot{x} = \begin{bmatrix}x_4\\x_5\\x_6\\M(x)^{-1}N(x,u)\end{bmatrix}\in\mathbb{R}^6
\]
where
\[
M(x) = \begin{bmatrix}
			 a_1 		      &    a_2\cos x_2             &    a_3\cos x_3\\
 			a_2\cos x_2   &    a_4 		          &    a_5 \cos(x_2 - x_3) \\
		         a_3\cos x_3    & a_5\cos(x_2 - x_3)    &    a_6
	    \end{bmatrix}
	    \]
	    and
	    \[N(x,u) =
	    \begin{bmatrix}
                     u_1 + a_2x_5^2\sin x_2 + a_3x_6^2\sin x_3 - \delta_0x_4\\
	 	 - a_5x_6^2\sin(x_2 - x_3) + \delta_2x_6 + a_7\sin x_2 - x_5(\delta_1 + \delta_2)\\
	          u_2 + a_5\sin(x_2 - x_3)x_5^2 + \delta_2x_5 - \delta_2 x_6 + a_8\sin x_3\\
	\end{bmatrix}.
\]
The states $x_1$, $x_2$, $x_3$ represent, respectively, the position of the cart (in meters), the angle of the lower rod and the angle of the upper rod of the double pendulum (both in radians); the states $x_4$, $x_5$ and $x_6$ are then the corresponding velocities in meters per second for the cart and radians per second for the pendulum rods. The constants are given by $a_1 = 0.85$, $a_2 = 0.2063$, $a_3 = 0.0688$, $a_4 = 0.0917$, $a_5 = 0.0344$, $a_6 = 0.0229$, $a_7 = 2.0233$, $a_8 =  0.6744$, $\delta_0 = 0.3$, $\delta_1 = 0.1$, $\delta_2 = 0.1$. We are interested in computing the maximum controlled invariant subset of the state constraint set
\[
X = [-1, 1] \times [-\pi/3, \pi/3] \times [-\pi/3, \pi/3] \times [-0.5, 0.5] \times [-5, 5] \times [-5, 5].
\]
We investigate two cases. First, we consider the situation where only the middle joint is actuated and there is no force on the cart; therefore we impose the constraint $(u_1,u_2)\in U= \{0\}\times [-1,1]$. Second, we consider the situation where we can also exert a force on the cart; in this case we impose $(u_1,u_2)\in U= [-1,1] \times [-1,1]$. Naturally, the MCI set for the second case is larger (or at least the same) as for the first case. This is confirmed\footnote{There is no a priori guarantee on set-wise ordering of the outer approximations; what is guaranteed is the ordering of optimal values of the optimization problems~(\ref{plmic}) or (\ref{dlmic}). } by outer approximations of degree four whose section for $x_1 = 0$, $x_4 = 0$, $x_5 = 0$ is shown in Figure~\ref{fig:acrobOnCart}. In order to compute the outer approximations we took a third order Taylor expansion of the non-polynomial dynamics even though exact treatment would be possible via a coordinate transformation leading to rational dynamics to which our methods can be readily extended; this extension is, however, not treated in this paper and therefore we opted for the simpler (and non-exact) approach using Taylor expansion. Before solving the problem we made a linear coordinate transform so that the state constraint set becomes the unit hypercube $[-1,1]^6$.

This example, which is the largest of those considered in this paper,  took 110 seconds to solve\footnote{All examples were run on an Apple iMac with 3.4 GHz Intel Core i7, 8 GB RAM and Mac OS X 10.8.2. The time reported is the pure solver time, not including the Yalmip preprocessing time.} with SeDuMi for $d = 4$; the corresponding time with the MOSEK SDP solver was 10 seconds. Using MOSEK we could also solve this example for $d = 6$ (in 420 seconds) although there the solver converged to a solution with a rather poor accuracy\footnote{Note that the MOSEK SDP solver is still being developed and its accuracy is likely to improve in the future.} and therefore we do not report the results.

\begin{figure*}[ht]
	\begin{picture}(120,120)
	\put(158,0){\includegraphics[width=60mm]{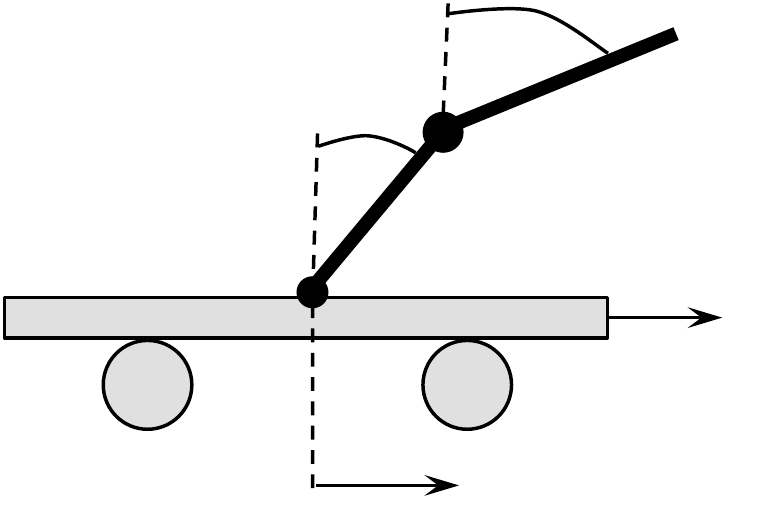}} 
	\put(238,15){\small $x_1$}
	\put(300,53){\small $u_1$}
	\put(260,78){\small $u_2$}
	\put(232,78){\small $x_2$}
	\put(268,107){\small $x_3$}
	\end{picture}
	\caption{\small Acrobot on a cart -- sketch}
	\label{fig:acrobOnCart_sketch}
\end{figure*}

\begin{figure*}[ht]
	\begin{picture}(140,220)
	\put(128,20){\includegraphics[width=80mm]{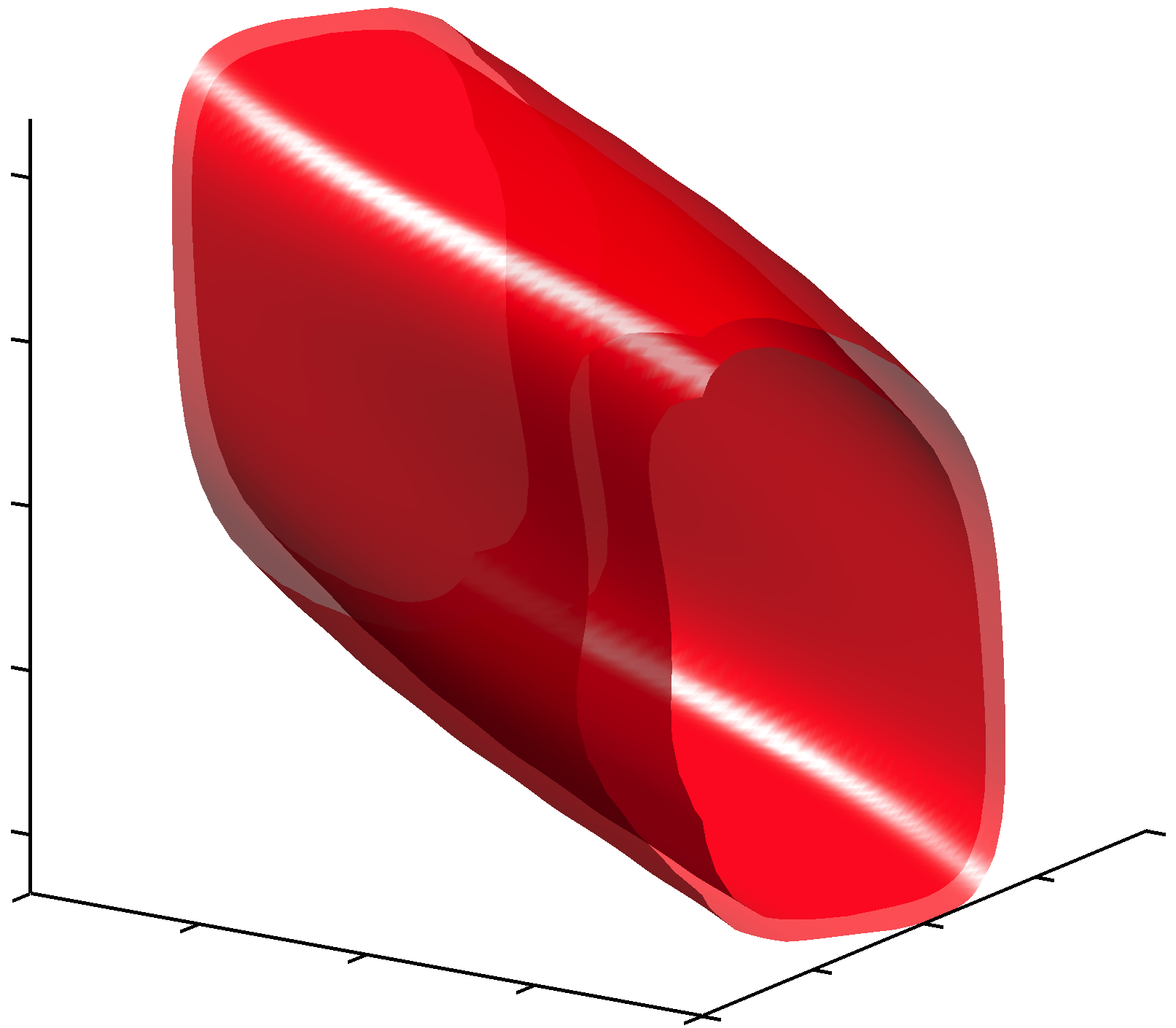}} 

	\put(165,10){\small $x_3$}
	\put(335,20){\small $x_2$}
	\put(70,119){\small $x_6 = \dot{x}_3$} 
	
	\put(110,35){\footnotesize $-\pi/2$}
	\put(140,28.5){\footnotesize $-\pi/4$}
	\put(184,22){\footnotesize $0$}
	\put(205,18){\footnotesize $\pi/4$}
	\put(242,12){\footnotesize $\pi/2$}
	
	\put(271,16){\footnotesize $-\pi/2$}
	\put(292.5,25){\footnotesize $-\pi/4$}
	\put(322.5,31.5){\footnotesize $0$}
	\put(332,41){\footnotesize $\pi/4$}
	\put(350,49){\footnotesize $\pi/2$}
	
	\put(114,55){\footnotesize $-1$}
	\put(107,87){\footnotesize $-0.5$}
	\put(120,118){\footnotesize $0$}
	\put(115,150){\footnotesize $0.5$}
	\put(121,182){\footnotesize $1$}

	\end{picture}
	\caption{\small Acrobot on a cart -- section of the polynomial outer approximations of degree four for $(x_1,x_4,x_5) = (0,0,0)$. Only the middle joint actuated -- darker, smaller; middle joint and the cart actuated -- lighter, larger. The states displayed $x_2$, $x_3$ and $x_6$ are, respectively, the lower pendulum angle, the upper pendulum angle and the upper pendulum angular velocity. }
	\label{fig:acrobOnCart}
\end{figure*}

\section{Conclusion}

We derived an infinite-dimensional convex characterization of the maximum controlled invariant (MCI) set, finite-dimensional approximations of (the dual of) which provide a converging sequence of semialgebraic outer-approximations to this set. The outer-approximations are the outcome of a single semidefinite program
(SDP) with no additional data required besides the problem description. Therefore the approach is readily applicable using freely available modeling tools such Gloptipoly 3~\cite{glopti} or YALMIP~\cite{yalmip} with no hand-tuning involved.

The cost to pay for this comfort is the relatively unfavourable scalability of the semidefinite programs solved -- the number of variables grows as $O((n+m)^d)$, where $n$ and $m$ are the state and control dimensions and $d$ is the degree of the approximating polynomial. Therefore, in order for this approach to scale to medium dimensions (say, more than $m+n = 6$) one either has to tradeoff accuracy by taking small $d$ or go beyond the standard freely available solvers such as SeDuMi or SDPA. One possibility is parallelization; for instance, the free parallel solver SDPARA~\cite{sdpara} allows for the approach to scale to larger dimensions. Alternatively, one can utilize one of the (few) commercial SDP solvers; in particular, the recently released MOSEK SDP solver seems to show far superior performance on our problem class, and therefore this may allow the approach to scale to larger dimensions (see also the discussion following the acrobot-on-a-cart example in Section~\ref{sec:acrobotOnCart}). Finally, one can resort to customized structure-exploiting solutions; this is a promising direction of future research currently investigated by the authors. At this point it should be emphasized that, to the best of the authors' knowledge, all of the existing approaches providing approximations of similar quality experience similar or worse scalability properties.

Other directions of future research include the extension of the presented approach to inner approximations of MCI sets, to stochastic systems and to uncertain systems. Partial results on the inner approximations for the related problem of region of attraction computation already exist~\cite{roa_inner}, albeit in uncontrolled setting only.

\section*{Appendix A}
We start by embedding our problem in the setting of discrete-time Markov control processes; terminology and notation is borrowed from the classical reference~\cite{discrete}. Let us define a stochastic kernel
on $U$ given $X$ as a map $\nu(\cdot \!\mid\! \cdot)$ 
such that $\nu(\cdot\!\mid\! x)$ is a probability measure on $U$
for all $x\in X$ and $\nu(B\!\mid\! \cdot)$ is a measurable function on $X$ for all $B\subset U$.
Any such stochastic kernel gives rise to a discrete-time Markov process when applied to system~(\ref{sysd}) as a stationary randomized control policy (a policy which, given $x$, chooses the control action randomly based on the probability distribution $\nu(\cdot\!\mid\! x )$, i.e., $\mathrm{Prob}(u\in B\!\mid\! x)= \nu(B\!\mid\! x)$ for all $B\subset U$). The transition kernel $Q_\nu(\cdot\!\mid\! \cdot)$ of this stationary Markov process is then given by
\[
Q_\nu(A\!\mid\! x) = \int_U I_A(f(x,u))\,d\nu(u\!\mid\! x) =  \mathrm{Prob}(x^+\in A\!\mid\! x)\quad\forall\,A\subset\mathbb{R}^n,
\]
where $x$ is the current state and $x^+$ the successor state. The $t$-step transition kernel is then defined by induction as
\[
Q_{\nu}^t(A\!\mid\! x) := \int_{\mathbb{R}^n}Q(A \!\mid \!   y)\,dQ_{\nu}^{t-1}(y \!\mid \!   x),\quad t\in\{2,3,\ldots\}
\]
with $Q_\nu^1 := Q_\nu$. Given an initial distribution $\mu_0$, the distribution of the Markov chain at time $t$, $\tilde{\mu}_t$, is given by
\[
	\tilde{\mu}_t(A) = \int_X Q_\nu^t(A\!\mid\! x)\,d\mu_0(x) = \mathrm{Prob}(x_t\in A).
\]
The joint distribution of state and control is then
\[
\mu_t(A\times B) = \int_A \nu(B\!\mid\! x)\,d\tilde{\mu}_t(x).
\]
The discounted occupation measure associated to the Markov process is defined by
\[
\mu(A\times B) = \sum_{t=0}^\infty \alpha^t \mu_t(A\times B).
\]
Note that this relation reduces to~(\ref{eq:discOM}) when $\mu_t = \delta_{(x_t,u_t)}$

In order to prove Lemma~\ref{lem:corrDisc} we need the following result that can be found in~\cite{discrete}. 

\begin{lemma}\label{lem:corrd}
For any pair of measures $(\mu_0,\mu)$ satisfying equation~(\ref{eq:LiouvDisc}) there exists a stationary randomized control policy $\nu(\cdot\!\mid\!x)$ such the Markov chain obtained by applying this control policy to the difference equation~(\ref{sysd}) starting from initial distribution $\mu_0$ has the discounted occupation measure equal to $\mu$.
\end{lemma}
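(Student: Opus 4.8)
The plan is to realise $\mu$ as the discounted occupation measure of a Markov chain whose transition law is read off from $\mu$ by disintegration. Concretely: (i) extract the stationary randomized policy $\nu(\cdot\!\mid\!x)$ as the disintegration of $\mu$ against its marginal on $X$; (ii) show that this marginal, call it $\hat\mu$, is the unique solution of the discounted ``flow'' equation $\hat\mu=\mu_0+\alpha Q_\nu^{\ast}\hat\mu$ associated with $\nu$; (iii) invert that equation by a Neumann series to identify $\hat\mu$ with $\sum_{t\ge0}\alpha^{t}\tilde\mu_t$; and (iv) reassemble $\mu$ from $\hat\mu$ and $\nu$ and recognise $\sum_{t\ge0}\alpha^{t}\mu_t$.

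For (i), $\mathrm{spt}\,\mu\subset X\times U$ compact makes $\mu$ finite; let $\hat\mu\in M(X)$ be defined by $\hat\mu(\cdot)=\mu(\cdot\times U)$. Because $X$ and $U$ are compact metric, hence Polish, the disintegration theorem gives a stochastic kernel $\nu(\cdot\!\mid\!x)$ — $\nu(U\!\mid\!x)=1$ for $\hat\mu$-a.e.\ $x$, $\nu(B\!\mid\!\cdot)$ Borel — with $\mu(A\times B)=\int_A\nu(B\!\mid\!x)\,d\hat\mu(x)$; on the $\hat\mu$-null set where the conditional fails to be a probability measure we fix $\nu(\cdot\!\mid\!x)$ arbitrarily. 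Set $Q_\nu(A\!\mid\!x)=\int_U I_A(f(x,u))\,d\nu(u\!\mid\!x)$ as in Appendix~A and let $Q_\nu^{\ast}$ be the induced push-forward on measures, $(Q_\nu^{\ast}\eta)(A)=\int Q_\nu(A\!\mid\!x)\,d\eta(x)$. For (ii), take $v$ polynomial in (\ref{eq:LiouvDisc}) (legitimate by density, and then $v\circ f$ is defined everywhere) and disintegrate the last integral: (\ref{eq:LiouvDisc}) becomes $\int v\,d\hat\mu=\int v\,d\mu_0+\alpha\int v\,d(Q_\nu^{\ast}\hat\mu)$ for all polynomials $v$, and since all measures sit in a fixed compact set this is equivalent to the measure identity $\hat\mu=\mu_0+\alpha Q_\nu^{\ast}\hat\mu$. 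In particular $\hat\mu\ge\mu_0$, so $\mu_0\ll\hat\mu$, and $Q_\nu^{\ast}\hat\mu\le\alpha^{-1}\hat\mu$; iterating monotonicity of $Q_\nu^{\ast}$ gives $(Q_\nu^{\ast})^{t}\mu_0\le\alpha^{-t}\hat\mu$ for every $t$, so the chain started from $\mu_0$ never charges the $\hat\mu$-null set and the arbitrary extension of $\nu$ is immaterial.

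For (iii), $Q_\nu^{\ast}$ is non-expansive in total variation (push-forward by a Markov kernel) and $\alpha\in(0,1)$, so unrolling the flow equation $N$ times gives $\hat\mu=\sum_{t=0}^{N-1}\alpha^{t}(Q_\nu^{\ast})^{t}\mu_0+\alpha^{N}(Q_\nu^{\ast})^{N}\hat\mu$ with remainder of mass $\alpha^{N}\hat\mu(X)\to0$; hence $\hat\mu=\sum_{t\ge0}\alpha^{t}(Q_\nu^{\ast})^{t}\mu_0=\sum_{t\ge0}\alpha^{t}\tilde\mu_t$ in total variation, where $\tilde\mu_t$ is the time-$t$ law of the Markov chain obtained by applying $\nu$ to (\ref{sysd}) from initial law $\mu_0$. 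For (iv), disintegrate by $\nu$ and exchange the monotone sum with the integral:
\[
\mu(A\times B)=\int_A\nu(B\!\mid\!x)\,d\hat\mu(x)=\sum_{t=0}^{\infty}\alpha^{t}\int_A\nu(B\!\mid\!x)\,d\tilde\mu_t(x)=\sum_{t=0}^{\infty}\alpha^{t}\mu_t(A\times B),
\]
which, by the Appendix's definitions $\mu_t(A\times B)=\int_A\nu(B\!\mid\!x)\,d\tilde\mu_t(x)$ and $\mu=\sum_t\alpha^t\mu_t$, says precisely that $\mu$ is the discounted occupation measure of that chain.

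The only genuinely delicate point is the disintegration in step (i): obtaining an honest stochastic kernel — Borel in $x$, a probability measure in the other argument — rather than an a.e.-defined family of conditionals, and checking that the arbitrary completion on the $\hat\mu$-null set cannot affect the chain launched from $\mu_0$ (this is exactly why one records $(Q_\nu^{\ast})^{t}\mu_0\ll\hat\mu$ in step (ii)). This needs only that $X$ and $U$ be Polish, which holds by compactness. The remaining ingredients — Riesz representation to pass from the weak to the strong form of the flow equation, the Neumann-series inversion, and the monotone-convergence interchange — are routine, which is why the statement can simply be quoted from \cite{discrete}.
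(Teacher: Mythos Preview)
Your proposal is correct and follows essentially the same route as the paper's proof: disintegrate $\mu$ to extract the policy $\nu$ and the $x$-marginal, rewrite~(\ref{eq:LiouvDisc}) as the flow equation $\hat\mu=\mu_0+\alpha Q_\nu^{\ast}\hat\mu$, unroll it so that the remainder $\alpha^{N}(Q_\nu^{\ast})^{N}\hat\mu$ vanishes, and then reassemble the full occupation measure via $\nu$. The paper is slightly terser (it plugs $v=I_A$ and $v=Q_\nu(A\!\mid\!\cdot)$ directly rather than arguing via polynomials and Riesz, and it does not spell out the $\hat\mu$-null-set issue you flag), but the argument is the same.
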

\begin{proof}
Disintegrate $\mu$ as $d\mu(x,u) = d\nu(u \!\mid \!   x)d\tilde{\mu}(x)$, where $\tilde{\mu}$ denotes the $x$-marginal of $\mu$ and $\nu$ is a stochastic kernel on $U$ given $X$. According to the discussion preceding Lemma~\ref{lem:corrd}, applying $\nu$ to (\ref{sysd}) gives rise to a stationary discrete-time Markov process with the transition kernel $Q_\nu$ starting form the initial distribution $\mu_0$.

With this notation, equation~(\ref{eq:LiouvDisc}) can be equivalently rewritten as
\begin{equation}\label{eq:LiouvDisc2}
\int_X v(x)\,d\tilde{\mu}(x) = \int_X v(x)\,d\mu_0(x) + \alpha \int_X\int_{\mathbb{R}^n} v(y)\,dQ_\nu(y \!\mid \!   x)\,d\tilde{\mu}(x)
\end{equation}
for any measurable $v(x)$ (derivation of equation~(\ref{eq:LiouvDisc}) did not depend on the continuity of~$v$). Taking $v(x) := I_A(x)$ we obtain 
\begin{equation}\label{eq:LiouvDisc3}
\tilde{\mu}(A) = \mu_0(A) + \alpha\int_X Q_\nu (A \!\mid \!   x)\,d\tilde{\mu}(x)\quad \forall\, A\subset X.
\end{equation}
 
Using relation~(\ref{eq:LiouvDisc2}) with $v(x) := Q_\nu(A \!\mid \!   x)$ to evaluate the integral w.r.t. $\tilde{\mu}$ on the right hand side of~(\ref{eq:LiouvDisc3}) we get
\[
\tilde{\mu}(A) = \mu_0(A) + \alpha\int_XQ_\nu(A \!\mid \!   x)\,d\mu_0(x)+ \alpha^2\int_X Q_\nu^2(A \!\mid \!   x)\,d\tilde{\mu}(x).
\]
By iterating this procedure we obtain
\begin{equation}\label{eq:convToZero}
\tilde{\mu}(A) = \mu_0(A)+\sum_{i=1}^t \alpha^i \underbrace{\int_X Q_\nu^i(A \!\mid \!   x)\,d\mu_0(x)}_{\displaystyle \mu_i(A)}\; +\; \underbrace{\alpha^{t+1}\int_X Q_\nu^{t+1}(A \!\mid \!   x)\,d\tilde{\mu}(x)}_{\displaystyle \to 0},
\end{equation}
and taking the limit as $t\to \infty$ gives
\[
\tilde{\mu}(A) = \sum_{t=0}^\infty \alpha^t\tilde{\mu}_t(A),
\]
where the third term in~(\ref{eq:convToZero}) converges to zero because $\alpha\in(0,1)$, $Q_\nu^{t+1}(A\!\mid\!x) \le 1$ and $\tilde{\mu}$ is a finite measure. Hence the $x$-marginal of the discounted occupation measure of the Markov chain coincides with the $x$-marginal of $\mu$.

Finally, to establish equality of the whole measures observe that
\[\sum_{t=0}^\infty \alpha^t\mu_t(A\times B) = \sum_{t=0}^\infty\alpha^t \int_A \nu(B \!\mid \!   x)\,d\tilde{\mu}_t(x)= \int_A \nu(B \!\mid \!   x)\,d\tilde{\mu}(x)=\mu(A\times B).  \]
\end{proof}

\textbf{Proof of Lemma~\ref{lem:corrDisc}:} Disintegrate $\mu$ to $d\mu(x,u) = d\nu(u \!\mid\! x)d\tilde{\mu}(x)$ as in the proof of Lemma~\ref{lem:corrd}. Then for any $x\in S:=\mathrm{spt}\,\tilde{\mu}$ we have
\[\int_U I_{S}(f(x,u))\,\nu(u \!\mid \!   x) = 1.\]
This relation says that the support of $\tilde{\mu}$ is invariant under $\nu$ and follows from Lemma~\ref{lem:corrd}, from the definition of the occupation measure $\mu$, from the definition of the support and from the fact that $\nu(\cdot \!\mid \!   x)$ is a probability measure for all $x$.

Define an admissible stationary deterministic control policy by taking any measurable selection $u(x)\in \mathrm{spt}\,\nu(\cdot  \!\mid \!   x) \subset U$. Define further the sequence of probability measures
\[
\nu_n(A \!\mid \!   x) = \frac{\nu(B_{1/n}(u(x))\cap A \!\mid \!   x)}{ \nu(B_{1/n}(u(x))\cap U \!\mid \!   x)}\quad \forall\, n\in\{1,2,\ldots\},\;\; A\subset U,
\]
where $B_{1/n}(u(x))$ is a closed ball of radius $1/n$ centered at $u(x)$. Then $\nu_n(\cdot\!\mid\! x)$ converges weakly-* (or weakly or narrowly) to $\delta_{u(x)}$ and
\[
\int_U I_{S}(f(x,u))\,\nu_n(u \!\mid \!   x) = 1 \quad \forall\,n\in\{1,2,\ldots\}. 
\]
Therefore,
\[
1  = \limsup_{n\to\infty} \int_U I_{S}(f(x,u))\,\nu_n(u \!\mid \!   x) \le\int_U I_{S}(f(x,u))\,\delta_{u(x)}(u) = I_{S}(f(x,u(x))),
\]
where the inequality follows by the Portmanteau lemma since the set $\{u \!\mid \!   f(x,u)\in S\cap B_{1/n}(u(x))\}$ is closed for all $x$ by continuity of $f$. Therefore in fact $I_{S}(f(x,u(x)))=1$ and so $f(x,u(x))\in \mathrm{spt}\,\tilde{\mu}$ for all $x\in \mathrm{spt}\, \tilde{\mu}$. Therefore $\mathrm{spt}\,\tilde{\mu}\subset X$ is invariant for the closed loop system $x_{t+1} = f(x_t,u(x_t))$, where $u(x)$ is an admissible deterministic control policy. Therefore necessarily $\mathrm{spt}\,\tilde{\mu}\subset X_I$. Finally, from equation~(\ref{eq:LiouvDisc}) clearly $\mathrm{spt}\,\mu_0\subset \mathrm{spt}\,\tilde{\mu}$ and so $\mathrm{spt}\,\mu_0\subset X_I$.  \hfill $\Box$

\section{Appendix~B}
\begin{lemma}\label{lem:corrContAux}
For any pair of measures ($\mu_0,\mu$) solving (\ref{eq:LiouvCont}), there exists a family of trajectories of the convexified inclusion~(\ref{sysc}) starting from $\mu_0$ such that the $x$-marginal of its discounted occupation measure is equal to the $x$-marginal of $\mu$.
\end{lemma}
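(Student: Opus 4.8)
The plan is to mirror, in continuous time, the Markov-decision-process argument that was used in discrete time (Lemma~\ref{lem:corrd} and the proof of Lemma~\ref{lem:corrDisc}), with the discrete Markov chain replaced by a continuous-time controlled Markov process. First I would disintegrate the occupation measure $\mu$ as $d\mu(x,u) = d\nu(u\!\mid\!x)\,d\tilde{\mu}(x)$, where $\tilde{\mu}$ is the $x$-marginal of $\mu$ and $\nu(\cdot\!\mid\!x)$ is a stochastic kernel on $U$ given $X$. Substituting this disintegration into~(\ref{eq:LiouvCont}) and writing $\bar{f}(x) := \int_U f(x,u)\,d\nu(u\!\mid\!x)$ (which, by the definition of the convex hull, takes values in $\mathrm{conv}\,f(x,U)$) collapses~(\ref{eq:LiouvCont}) to
\begin{equation}\label{eq:LiouvContReduced}
\beta\int_X v(x)\,d\tilde{\mu}(x) = \int_X v(x)\,d\mu_0(x) + \int_X \mathrm{grad}\,v\cdot\bar{f}(x)\,d\tilde{\mu}(x)\quad\forall v\in C^1(X).
\end{equation}
This is precisely the Liouville/transport equation relating an initial measure $\mu_0$ and a discounted occupation measure $\tilde{\mu}$ for the (single-valued) vector field $\bar{f}$ with discount $\beta$.

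The core step is then to invoke the known correspondence between solutions of such a Liouville equation and genuine trajectories. Concretely, I would argue that~(\ref{eq:LiouvContReduced}) implies the existence of a probability kernel $x_0 \mapsto (\text{trajectory } x(\cdot\!\mid\!x_0))$, with each $x(\cdot\!\mid\!x_0)$ an absolutely continuous solution of $\dot{x} = \bar{f}(x)$ staying in $X$, such that $\tilde{\mu}$ is the $\mu_0$-average of the individual discounted occupation measures $\int_0^\infty e^{-\beta t}\delta_{x(t\mid x_0)}\,dt$. The cleanest route is the superposition principle (Ambrosio's version of the Ambrosio--Crippa theorem, or the Hamilton-style argument already referenced via~\cite{discrete}/\cite{roa} for the undiscounted finite-horizon case): after undoing the discount by the change of variables $d\tilde{\mu}(x) = \int_0^\infty e^{-\beta t}\,d\rho_t(x)\,dt$ one recovers a standard continuity equation $\partial_t\rho_t + \mathrm{div}(\bar{f}\rho_t) = 0$ on $[0,\infty)\times X$ with $\rho_0 = \mu_0$ (the boundary term at infinity is harmless because of the discount and the boundedness of $X$), and the superposition principle yields a measure on trajectory space whose time-marginals are the $\rho_t$. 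Since $\bar{f}(x)\in\mathrm{conv}\,f(x,U)$ pointwise, every trajectory produced this way is a trajectory of the convexified inclusion~(\ref{sysc}); by construction its state stays in $\mathrm{spt}\,\tilde{\mu}\subset X$, and averaging the occupation measures over $\mu_0$ returns exactly $\tilde{\mu}$, i.e. the $x$-marginal of $\mu$. This is what the lemma asserts.

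The technical subtleties to watch are: (i) measurability of the selection $x_0\mapsto x(\cdot\!\mid\!x_0)$, which is delivered by the superposition principle together with a measurable selection theorem; (ii) justifying the vanishing of the boundary term when passing between the discounted occupation equation~(\ref{eq:LiouvContReduced}) and the continuity equation, exactly as in the integration-by-parts computation preceding~(\ref{eq:LiouvCont}); and (iii) the fact that $\bar{f}$ need not be Lipschitz, so uniqueness of trajectories fails — but the superposition principle does not require uniqueness, only that $\bar{f}$ be bounded and Borel on the compact set $X$, which holds here since $f$ is polynomial and $\nu$ is a Borel kernel. I expect step~(ii), together with the precise invocation of the superposition principle in the discounted/infinite-horizon setting, to be the main obstacle; all other steps are routine once the disintegration is in place. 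The remaining conclusion of Appendix~B, Lemma~\ref{lem:corrCont} itself (the volume bound $\lambda(\mathrm{spt}\,\mu_0)\le\lambda(X_I)$), then follows from this auxiliary lemma exactly as in the discrete case: along $\mu_0$-almost every trajectory the state remains in $\mathrm{spt}\,\tilde{\mu}\subset X$ forever, hence $\mathrm{spt}\,\mu_0$ differs from a subset of $X_I$ by a $\lambda$-null set, and monotonicity of $\lambda$ gives the claim.
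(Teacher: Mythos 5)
Your outline matches the paper's proof at the two ends --- the disintegration $d\mu(x,u) = d\nu(u\!\mid\!x)\,d\tilde\mu(x)$, the relaxed field $\bar f(x)=\int_U f(x,u)\,d\nu(u\!\mid\!x)\in\mathrm{conv}\,f(x,U)$, and the final appeal to Ambrosio's superposition principle to turn a solution of the continuity equation into a measure on trajectory space --- but it has a genuine gap exactly in the middle. You present the passage from the \emph{stationary} discounted equation (your reduced Liouville identity) to a \emph{time-parametrized} family $\rho_t$ with $\rho_0=\mu_0$, $\partial_t\rho_t+\mathrm{div}(\bar f\rho_t)=0$ and $\tilde\mu=\int_0^\infty e^{-\beta t}\rho_t\,dt$ as a mere ``change of variables.'' It is not: it is an existence statement. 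The reduced identity is a single resolvent-type equation relating $\tilde\mu$ and $\mu_0$ for a vector field $\bar f$ that is only bounded and Borel, and nothing in it tells you a priori that $\tilde\mu$ disintegrates along a curve of time-marginals starting at $\mu_0$. The direction you implicitly use is the easy one (if such a family $\rho_t$ exists, integrating the continuity equation against $e^{-\beta t}$ recovers the discounted identity); the hard converse is the entire content of the lemma. Since $\bar f$ need not be Lipschitz or even continuous, you cannot manufacture $\rho_t$ by pushing $\mu_0$ forward along a flow, and the superposition principle cannot help at this stage because it takes the family $\rho_t$ as \emph{input}, not output.

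The paper fills precisely this gap with the Bhatt--Borkar theory of occupation measures for controlled Markov processes: it compactifies the state space to $E=\mathbb{R}^n\cup\{\Delta\}$, formulates the relaxed martingale problem for the operator $Aw=\mathrm{grad}\,w\cdot f$ on a suitable domain of functions vanishing at infinity, verifies Conditions 1--3 of \cite{bhatt}, and invokes Theorem~2.2 and Corollary~2.2 there to obtain a process $Y(t)$ with initial law $\mu_0$ and a stationary relaxed control $\nu$ whose discounted occupation measure equals $\mu$; the marginals $\mu_t$ of $Y(t)$ are then exactly the family you need, and only afterwards does Ambrosio's superposition principle enter (as in Lemma~4 of \cite{roa}). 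To repair your argument you would have to either reproduce this martingale-problem step or cite an equivalent result; the remainder of your outline, including the deduction of Lemma~\ref{lem:corrCont} from the present lemma, is consistent with the paper.
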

\begin{proof}
The proof is based on fundamental results of~\cite{ambrosio} and \cite{bhatt} and on the compactification procedure discussed in~\cite{kurtz}.

We begin by embedding the problem in a stochastic setting. To this end, define the extended state space $E$ as the one-point compactification of $\mathbb{R}^n$, i.e., $E = \mathbb{R}^n \cup \{\Delta\}$, where $\Delta$ is the point compactifying $\mathbb{R}^n$. Define also the linear operator $A:\mathcal{D}(A) \to C(E\times U)$ by
\[w\mapsto A w := \mathrm{grad}\,{w}\cdot f,\]
where the domain of $A$, $\mathcal{D}(A)$, is defined as
\begin{align*}
\mathcal{D}(A):= \{ w : E\to \mathbb{R}  \mid  &\; w \in C^1(\mathbb{R}^n),\; w(\Delta) = 0, \lim_{x \to\Delta}w(x) = 0,\\ & \lim_{x\to\Delta}\mathrm{grad}\,w\cdot f(x,u) = 0\; \forall\: u\in U\}.
\end{align*}
In words, $\mathcal{D}(A)$ is the space all continuously differentiable functions vanishing at infinity such that $\mathrm{grad}\,w\cdot f$ also vanishes at infinity for all $u\in U$. Now consider the relaxed martingale problem~\cite{bhatt}: find a stochastic process $Y : [0,\infty]\times \Omega\to E$ defined on some filtered probability space $(\Omega,\mathcal{F}, (\mathcal{F}_t)_{t\ge 0}, P)$ and a stochastic kernel $\nu(\cdot\!\mid\!\cdot)$ (stationary relaxed Markov control) on $U$ given $E$ such that
\begin{itemize}
\item $P(Y(0) \in A) = \mu_0(A)\quad \forall\, A\subset E$
\item for all $w\in \mathcal{D}(A)$ the stochastic process
\begin{equation}\label{eq:mart}
w(Y(t)) - \int_0^t \int_U Aw(Y(\tau),u)\,\nu(du  \!\mid\!  Y(\tau))\,d\tau
\end{equation}
is an $\mathcal{F}_t$-martingale (see, e.g., \cite{kallenberg} for a definition).
\end{itemize}

Observe that there exists a countable subset of $\mathcal{D}(A)$ (e.g., all polynomials with rational coefficients attenuated near infinity) dense in $\mathcal{D}(A)$ in the supremum norm. Next, $\mathcal{D}(A)$ is clearly an algebra that separates points of $E$ and $A1 = 0$. Finally, since $f(x,u)$ is polynomial and hence locally Lipschitz, the ODE $\dot{x} = f(x,u)$ has a solution on $[0,\infty)$ for any $x_0\in E$ and any fixed $u\in U$ in the sense that if there is a finite escape time $t_e$, then we define $x(t) = \Delta$ for all $t\ge t_e$. Each such solution satisfies the martingale relation~(\ref{eq:mart}) (with a trivial probability space). Therefore,  $A$ satisfies Conditions 1-3 of \cite{bhatt} and it follows from Theorem~2.2 and Corollary 2.2 therein that for any pair of measures satisfying the discounted Liouville's equation~(\ref{eq:LiouvCont}), there exists a solution to the above martingale problem whose discounted occupation measure is equal to $\mu$, that is,
\[
\mu(A\times B) =  \E\Big\{  \int_{0}^\infty e^{-\beta t}I_{A\times B}(Y(t),u)\,\nu(du \!\mid\!  Y(t))\,dt  \Big\},\quad P(Y(0)\in A) = \mu_0(A),
\]
where $\E$ denotes the expectation w.r.t. the probability measure $P$. From the martingale property of~(\ref{eq:mart}) and the definition of $A$ we get
\[
\E\{w(Y(t))\} - \E\Big\{ \int_0^t \int_U \mathrm{grad}\,w\cdot f(Y(\tau),u)\,\nu(du \!\mid\!  Y(\tau))\,d\tau \Big\} = \E\{Y(0)\}.
\]
Now let $\mu_t$  denote the marginal distribution of $Y(t)$ at time $t$; that is, \[\mu_t(A) := P(Y(t)\in A) = \E\{I_A(Y(t))\} \quad \forall\: A\subset X.\] Then the above relation becomes
\[
\int_X w(x)\, d\mu_t(x) - \int_0^t \int_X \int_U \mathrm{grad}\,w(x)\cdot f(x,u)\,\nu(du \!\mid\!  x)\,d\mu_\tau(x)\,d\tau = \int w(x)\,d\mu_0(x),
\]
where we have used Fubini's thorem to interchange the expectation operator and integration w.r.t. time. Defining the relaxed vector field
\[
\bar{f}(x) = \int_U f(x,u)\,\nu(du \!\mid\!  x)\in \mathrm{conv}\,f(x,U)
\]
and rearranging we obtain
\begin{equation}\label{eq:contInteg}
\int_X w(x)\, d\mu_t(x)  = \int w(x)\,d\mu_0(x) + \int_0^t \int_X \mathrm{grad}\,w(x)\cdot \bar{f}(x)\,d\mu_\tau(x)\,d\tau,
\end{equation}
where the equation holds for all $w\in C^1(X)$ almost everywhere with respect to the Lebesgue measure on $[0,\infty)$. The Lemma then follows from Ambrosio's superposition principle~\cite[Theorem~3.2]{ambrosio} using the same arguments as in the proof of Lemma~4 in~\cite{roa}.

\end{proof}

\textbf{Proof of Lemma~\ref{lem:corrCont}:} Suppose that a pair of measures $(\mu_0,\mu)$ satisfies~(\ref{eq:LiouvCont}) and that $\lambda(\mathrm{spt}\,\mu_0 \setminus X_I) > 0$. From Lemma~\ref{lem:corrContAux} there is a family of trajectories of~(\ref{sysc}) starting from $\mu_0$ with discounted occupation measure whose $x$-marginal coincides with the $x$-marginal of $\mu$. However, this is a contradiction since no trajectory starting from $\mathrm{spt}\,\mu_0 \setminus X_I$ remains in $X$ for all times and $\mathrm{spt}\,\mu\subset X$. Thus,  $\lambda(\mathrm{spt}\,\mu_0 \setminus X_I) = 0$ and so $\lambda(\mathrm{spt}\,\mu_0) \le \lambda(X_I)$. \hfill $\Box$

\section{Acknowledgements}
The authors are grateful to Sl\'avka Jadlovsk\'a for providing the acrobot-on-a-cart system and Andrea Alessandretti for providing the spider-web system.

\end{document}